\documentclass[11pt, twoside]{article}

\usepackage[english]{babel}

\usepackage{amssymb}
\usepackage{mathrsfs}
\usepackage{amsmath}
\usepackage{amsthm}
\usepackage{amsfonts}
\usepackage{latexsym}
\usepackage{indentfirst}
\usepackage{color}
\usepackage{txfonts}
\usepackage{enumerate}

\usepackage[colorlinks=true,
linkcolor=blue,
citecolor=red,
urlcolor=magenta,
]{hyperref}

\usepackage{txfonts}
\usepackage{anysize}

\allowdisplaybreaks

\newtheorem{theorem}{Theorem}[section]
\newtheorem{lemma}[theorem]{Lemma}

\newtheorem{proposition}[theorem]{Proposition}

\theoremstyle{definition}
\newtheorem{remark}[theorem]{Remark}

\newcounter{assum}

\renewcommand{\appendix}{\par
\setcounter{section}{0}%
\setcounter{subsection}{0}%
\setcounter{subsubsection}{0}%
\gdef\thesection{\@Alph\c@section}%
\gdef\thesubsection{\@Alph\c@section.\@arabic\c@subsection}%
\gdef\theHsection{\@Alph\c@section.}%
\gdef\theHsubsection{\@Alph\c@section.\@arabic\c@subsection}%
\csname appendixmore\endcsname
}

\numberwithin{equation}{section}

\begin{document}
\title{\bf\Large
A Sharp Planar Fractional Isoperimetric Inequality\footnotetext{\hspace{-0.35cm} 2020 {\it
Mathematics Subject Classification}.
Primary 49Q20; Secondary 49K40, 52A40, 52A10, 28A75.
\endgraf {\it Key words and phrases.}
fractional perimeter, sharp isoperimetric inequality, convex body, chord functional.
\endgraf This project is partially supported by the National Natural
Science Foundation of China (Grant Nos. 12431006, 12371093, and 12501118),
the Beijing Natural Science Foundation (Grant No. 1262011), the
Natural Science Foundation of Fujian Province (Grant No. 2026J008197),
the Fundamental Research Funds for the Central Universities
(Grant No. 2253200028), and Longyuan Young Talents of Gansu Province.
}}
\author{Xiaosheng Lin, Dachun Yang,
Sibei Yang, Wen Yuan\footnote{Corresponding
author, E-mail: \texttt{wenyuan@bnu.edu.cn}/{\color{red}\today}/Final version.}
\ and Yangyang Zhang}
\date{}
\maketitle

\vspace{-0.7cm}

\begin{center}
\begin{minipage}{13cm}
{\small {\bf Abstract.}\quad
In this article, we prove the sharp form of the 
fractional isoperimetric inequality in the plane,
originally posed by Maz'ya [Problem 1, Integral 
Equations Operator Theory, 2018]. More precisely,
we show that, for any given $s\in(0,1)$ and 
any bounded domain $\Omega \subset \mathbb{R}^2$
with $C^1$ boundary,
$$
P_s(\Omega) \le \frac{\pi^{s-\frac{1}{2}}\Gamma\left(\frac{3-s}{2}\right)}
  {s(1-s)\Gamma\left(\frac{4-s}{2}\right)}
  \left[\mathcal{H}^1(\partial\Omega)\right]^{2-s},
$$
where $P_s$ denotes the fractional $s$-perimeter, 
$\Gamma$ denotes the Gamma function,
and $\mathcal{H}^{1}$ denotes the $1$-dimensional 
Hausdorff measure on $\mathbb{R}^2$. The constant is sharp
and equality is attained by the disk. 
The proof proceeds in three steps: reducing the fractional perimeter
to a chord functional, reducing connected domains to convex bodies, 
and proving the sharp convex chord inequality
via a fractional Willmore-type inequality, a variational 
formula along outer parallel bodies, and asymptotic analysis.}
\end{minipage}
\end{center}

\vspace{0.1cm}

\tableofcontents
\section{Introduction}

The classical isoperimetric inequality is one of the most fundamental tools of modern mathematics,
asserting that among all sets of prescribed volume in Euclidean space, balls have the least perimeter
(see, for example, \cite{c01,d58,f15,f04,o78}).
In its modern formulation due to De Giorgi \cite{d58}, it states that, for any Borel set $E \subset \mathbb{R}^n$
with finite Lebesgue measure $|E|$,
$$
P(E) \ge n\omega_n^{\frac1n}|E|^{\frac{n-1}n},
$$
with equality holding if and only if $E$ is a ball. Here, $n\ge2$, $P(E)$ denotes the \emph{distributional perimeter}
of $E$ (which coincides with the classical $(n-1)$-dimensional measure of $\partial E$ when $E$ has a smooth boundary
$\partial E$), and  $\omega_n$ denotes the volume of the unit ball in $\mathbb{R}^n$.
Various quantitative versions of the isoperimetric inequality have garnered considerable interest in the past decades
(see, for example, \cite{cgprs22,fmp10,fmp08,hlpry25,hl25,lxz11,x20,x12,x09,x07}),
thanks to its wide-ranging applications across geometry, linear and nonlinear
partial differential equations, and probability theory.

More recently, attention has turned to nonlocal analogues of the isoperimetric inequality
(see, for example, \cite{adm11, crs10,cmr26,dnrv15,ffmmm15,fls08,fs08,fmm11,l14,nps18}).
In particular, for any given $s \in (0,1)$, the \emph{fractional $s$-perimeter} of a Borel
set $E \subset \mathbb{R}^n$ is defined by setting
$$
P_s(E) := \int_E \int_{\mathbb{R}^n \setminus E} \frac{dx\,dy}{|x-y|^{n+s}}.
$$
Frank and Seiringer  \cite{fs08} established the fractional isoperimetric inequality that,
for any Borel set $E \subset \mathbb{R}^n$ with finite Lebesgue measure $|E|$,
$$
|E|^{\frac{n-s}n}\le C_{n,s}P_s(E)
$$
for a suitable positive constant $C_{n,s}$ depending only on both $n$ and $s$, with equality
holding if and only if $E$ is a ball. Quantitative versions of this inequality have been
established by Fusco et al. \cite{fmm11} and by Figalli et al. \cite{ffmmm15}. Moreover,
a strong form of the fractional quantitative isoperimetric inequality has recently been
established by Cinti et al. \cite{cmr26}.

Despite these advances, the problem of determining the sharp constant in the fractional isoperimetric inequality,
in the sense of the sharp inequality proposed by Maz'ya \cite[Problem 1, p.\,5]{m18}, has
remained open. Specifically, let $n\ge2$, $s\in(0,1)$, and $\Omega\subset\mathbb{R}^n$ be a bounded
domain with smooth boundary $\partial\Omega$.
Maz'ya \cite[Problem 1, p.\,5]{m18} asks for the sharp constant $C$ in the isoperimetric inequality
\begin{equation}\label{e1.1}
P_s(\Omega)\le C[\mathcal{H}^{n-1}(\partial\Omega)]^{\frac{n-s}{n-1}},
\end{equation}
where $\mathcal{H}^{n-1}$ denotes the $(n-1)$-dimensional Hausdorff measure on $\mathbb{R}^n$.
We also mention that the sharp constant $C$ in the isoperimetric inequality \eqref{e1.1} is the
same as the sharp constant $C$ in the fractional Sobolev inequality
\begin{equation}\label{e1.1x}
\left[\int_{\mathbb{R}^{n}}\int_{\mathbb{R}^{n}}\frac{|u(x)-u(y)|^{q}}
{|x-y|^{n+s}}\,dx\,dy\right]^{\frac1q}
\le (2C)^{\frac1q}  \| \nabla u \|_{L^{1}(\mathbb{R}^{n})},
\end{equation}
where $q:= \frac{n-s}{n-1}$ and $u \in C_{\rm c}^{\infty}(\mathbb{R}^{n})$
(the set of all infinitely differentiable functions on $\mathbb{R}^n$
with compact support)
(see, for example, \cite[Section 6]{m03}). Meanwhile, by \cite[p.\,333]{m03},
we find that the sharp constant $C$ in \eqref{e1.1} have an upper bound
$$
C \le \frac{n \omega_{n}^{(n-1+s)/n}}{s(1-s)}
\left[\mathcal{H}^{n-1}(\partial B_{1})\right]^{\frac{n-s}{n-1}}.
$$
Here, and thereafter, $B_1:=B({\bf0},1)$ denotes
the \emph{unit ball} in $\mathbb{R}^n$ center at $\mathbf{0}$,
and $\mathbf{0}$ denotes the \emph{origin} of $\mathbb{R}^{n}$.

In this article, we find the sharp constant $C$ in the isoperimetric inequality
in \eqref{e1.1} in the planar case, which  hence answers Mazya's problem \cite[Problem 1, p.\,5]{m18} in this case.

\begin{theorem}\label{thm:main}
Let $s\in(0,1)$ and $\Omega\subset\mathbb{R}^2$ be a bounded domain with $C^1$ boundary. Then
\begin{align}\label{eq:main}
P_s(\Omega)\le \frac{P_s(B_1)}{(2\pi)^{2-s}}\left[\mathcal{H}^1(\partial\Omega)\right]^{2-s},
\end{align}
where
\begin{align*}
P_s(B_1)=\frac{2^{2-s }\pi^{\frac{3}{2}}\Gamma\left(\frac{3-s}{2}\right)}
  {s(1-s)\Gamma\left(\frac{4-s}{2}\right)}
\end{align*}
with  $\Gamma$ denoting the Gamma function.
Moreover, when $\Omega=B_1$, the equality in \eqref{eq:main} holds .
\end{theorem}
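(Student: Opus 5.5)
The plan follows the three steps announced in the abstract.

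\emph{Step 1: reduce $P_s$ to a chord functional.} For a bounded $C^1$ domain, I would use the divergence theorem twice (or a Crofton/Blaschke--Petkantschin type change of variables) to write
\begin{align*}
P_s(\Omega)=\frac{1}{s(1-s)}\int_{\partial\Omega}\int_{\partial\Omega}\frac{\nu(x)\cdot\nu(y)}{|x-y|^{s}}\,d\mathcal{H}^1(x)\,d\mathcal{H}^1(y).
\end{align*}
The starting point is $|x-y|^{-2-s}=\frac{1}{s^2}\Delta_x|x-y|^{-s}$ in dimension two. Equivalently, one can integrate over lines: for a.e.\ line $\ell$, the set $\ell\cap\Omega$ is a finite union of intervals, and integrating $|t-\tau|^{-1-s}$ over $\ell\cap\Omega\times\ell\setminus\Omega$ gives a sum over pairs of boundary points of $\pm|p_i-p_j|^{1-s}$ with alternating signs. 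This yields
\begin{align*}
P_s(\Omega)=c_s\int_{\mathcal{L}}\Phi_s(\ell\cap\Omega)\,d\ell,
\end{align*}
where $\Phi_s$ is an alternating sum of powers $|p_i-p_j|^{1-s}$ of chord lengths.

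\emph{Step 2: reduce connected domains to convex bodies.} For a connected $\Omega$, I would compare with $K=\mathrm{conv}\,\Omega$. Then $\mathcal{H}^1(\partial K)\le\mathcal{H}^1(\partial\Omega)$, so it suffices to show $\Phi_s(\ell\cap\Omega)\le\Phi_s(\ell\cap K)=|\ell\cap K|^{1-s}$ line by line. This is a concavity/subadditivity statement: for intervals $I_1,\dots,I_m$ in order inside a segment of length $L$, the alternating sum $\sum_{i<j}(-1)^{\dots}|p_i-p_j|^{1-s}$ is at most $L^{1-s}$. I would prove it by induction, merging gaps, using the concavity of $t\mapsto t^{1-s}$ and the elementary inequality $(a+b+c)^{1-s}+b^{1-s}\le(a+b)^{1-s}+(b+c)^{1-s}$ when $\Omega$ fills part of the segment. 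The worry is that lines missing parts of $\Omega$ but meeting $K$ could contribute negatively. Nonnegativity of the contribution of each line, together with the chord monotonicity, should handle this, with connectedness guaranteeing the perimeter comparison.

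\emph{Step 3: the sharp convex chord inequality.} The remaining statement is $\int_{\mathcal L}\sigma_K(\ell)^{1-s}\,d\ell\le c\,\mathcal{H}^1(\partial K)^{2-s}$ for convex $K$, with equality for disks. This is the main obstacle, since the chord power integral with exponent $1-s\in(0,1)$ is \emph{not} maximized at fixed perimeter by any obvious rearrangement; Steiner symmetrization increases it at fixed area, not at fixed perimeter. My approach is to set $F(K)=\int\sigma_K^{1-s}\,d\ell/\mathcal{H}^1(\partial K)^{2-s}$ and flow along outer parallel bodies $K_t=K+tB_1$. I would compute
\begin{align*}
\frac{d}{dt}\int\sigma_{K_t}^{1-s}\,d\ell
\end{align*}
via a boundary integral of the fractional curvature type, $(1-s)\int_{\partial K_t}\int_{\partial K_t}\ldots$. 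A fractional Willmore-type inequality should bound this derivative by $(2-s)\frac{2\pi}{\mathcal{H}^1(\partial K_t)}\int\sigma_{K_t}^{1-s}$, noting that $\frac{d}{dt}\mathcal{H}^1(\partial K_t)=2\pi$. That bound gives $\frac{d}{dt}F(K_t)\ge0$. As $t\to\infty$, $K_t/t\to B_1$, and an asymptotic expansion gives $F(K_t)\to F(B_1)$, hence $F(K)\le F(B_1)$. Finally, the value $P_s(B_1)$ would be computed from the chord formula by Beta-function integrals, and equality for $B_1$ is immediate.
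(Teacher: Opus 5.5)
\textbf{Main gap: Step 2 rests on a false inequality.} The line-by-line claim $\Phi_s(\ell\cap\Omega)\le|\ell\cap K|^{1-s}$ does not hold, so no induction can prove it.

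Take $\ell\cap\Omega=(0,1)\cup(1+g,2+g)$. Then $\Phi_s=2+g^{1-s}+(2+g)^{1-s}-2(1+g)^{1-s}$. This exceeds $(2+g)^{1-s}$ for small $g>0$, because $g^{1-s}\gg 2[(1+g)^{1-s}-1]\approx 2(1-s)g$. For instance, with $s=1/2$ and $g=0.01$ you get $1.508>1.418$. A thin rounded notch cut into a disk produces a positive-measure family of such lines.

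Even the integrated statement $P_s(\Omega)\le P_s(\overline{\operatorname{conv}}(\Omega))$ is false. A connected comb in the unit square with $N$ teeth of width $1/(2N)$ has $P_s(\Omega)\gtrsim N^{s}\to\infty$, while its hull stays fixed. So the danger is not lines contributing negatively (each $\Phi_s\ge 0$). It is lines on which $\Omega$ has several components, which contribute \emph{more} than the hull chord.

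That excess has to be paid for by the perimeter excess $\operatorname{Per}(\Omega)-\operatorname{Per}(K)$. Your concavity inequality only shows that the cross terms between components are nonnegative, i.e.\ $\Phi_s\le\sum_J|J|^{1-s}$; this is exactly Proposition \ref{prop:slicing11}. The missing idea is the paper's Crofton trade-off (Proposition \ref{prop:excess}), which has three parts:
\begin{itemize}
\item For a.e.\ line, $\sum_J|J|^q\le|K_{u,z}|^q+[N_\Omega(u,z)-N_K(u,z)][\operatorname{diam}(K)]^q$. Connectedness is used here: projections of $\Omega$ are intervals, so a.e.\ line meeting $K$ also meets $\Omega$.
\item Integrating with Crofton's formula, $\int\int N_\Omega=2\operatorname{Per}(\Omega)$ and $\int\int N_K=2\operatorname{Per}(K)$, gives $C_q(\Omega)\le C_q(K)+2[\operatorname{diam}(K)]^q[\operatorname{Per}(\Omega)-\operatorname{Per}(K)]$.
\item You then apply the convex inequality to $K$ and absorb the error. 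With $\operatorname{Per}(\Omega)=2\pi$, $t=\operatorname{Per}(K)/(2\pi)$ and $\operatorname{diam}(K)\le\operatorname{Per}(K)/2$, this amounts to $C_q(B_1)t^{q+1}+4\pi^{q+1}t^q(1-t)\le C_q(B_1)$. That requires the strict numerical margin $C_q(B_1)>4\pi^{q+1}/(q+1)$ from Lemma \ref{lem:disk}.
\end{itemize}

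\textbf{Secondary problem: Step 3 uses the wrong monotone quantity.} The fractional Willmore inequality (Proposition \ref{prop:FW}) compares $W_{1-q}(K_t)$ with the disk of the \emph{same perimeter}. Combined with $\frac{d}{dt}C_q(K_t)=2qW_{1-q}(K_t)$ and $\operatorname{Per}(K)=2\pi$, it gives $\frac{d}{dt}C_q(K_t)\ge\frac{d}{dt}C_q(B_{1+t})=\frac{q+1}{1+t}C_q(B_{1+t})$.

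What you need for $\frac{d}{dt}F(K_t)\ge0$ is instead $\frac{d}{dt}C_q(K_t)\ge\frac{q+1}{1+t}C_q(K_t)$. That would follow only if you already knew $C_q(K_t)\le C_q(B_{1+t})$, which is circular.

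The quantity the Willmore inequality actually makes monotone is the difference $H(t)=C_q(K_t)-C_q(B_{1+t})$. For it the endpoint is not soft. The convergence $K_t/(1+t)\to B_1$ only yields $H(t)=o(t^{q+1})$, which is compatible with $H(t)\to+\infty$ and so proves nothing. You need $H(t)\to0$. The paper (Proposition \ref{prop:lim}) gets $H(t)=O(t^{q-1})$ from three ingredients:
\begin{itemize}
\item a $C^2$ expansion of $\varepsilon\mapsto C_q(K_{\varepsilon,\varphi})$ for support functions $1+\varepsilon\varphi$;
\item vanishing of the first-order term for mean-zero $\varphi$, from rotation invariance together with $\operatorname{Per}(K)=2\pi$;
\item the assumption $q<1$.
\end{itemize}

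A minor slip: in Step 1 the boundary formula should have constant $s^{-2}$, not $1/(s(1-s))$. This follows from your own identity $|x-y|^{-2-s}=s^{-2}\Delta_x|x-y|^{-s}$ and can be checked on the disk.
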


\begin{remark}
By Theorem \ref{thm:main} and \cite[Section 6]{m03}, we conclude that the sharp constant in the
fractional Sobolev inequality \eqref{e1.1x} is
$$
\left[\frac{2\pi^{s-\frac{1}{2}}\Gamma\left(\frac{3-s}{2}\right)}
  {s(1-s)\Gamma\left(\frac{4-s}{2}\right)}\right]^{\frac{1}{2-s}}.
$$
\end{remark}

We now describe the proof strategy for Theorem \ref{thm:main}. The proof of Theorem \ref{thm:main}
proceeds through a series of reductions and sharp geometric inequalities.

First, we compare the fractional $s$-perimeter $P_s(\Omega)$ with a \emph{chord functional} $C_{1-s}(\Omega)$ (see Proposition
\ref{prop:slicing11}), which measures the total $(1-s)$-th power
of chord lengths in all directions.
We show that, for general domains, the inequality
\begin{align}\label{e1.3}
P_s(\Omega)\le\frac{1}{s(1-s)}C_{1-s}(\Omega)
\end{align}
holds and, for convex bodies, the equality in \eqref{e1.3} holds.
This chord functional formulation
transforms the nonlocal perimeter into a geometrically more tractable object.

Second, we reduce the problem from connected domains to planar convex bodies by showing that replacing a domain by
the closure of its convex hull does not increase the chord functional beyond an error controlled by the difference
in perimeters. This reduction, which relies on Crofton's formula and a careful slicing argument,
is essential for the proof.

Third, we prove the \emph{sharp convex chord inequality}: for any planar convex body $K$ and $q \in (0,1)$,
\begin{align}\label{e1.2}
C_q(K) \le C_q(B_1)\left[\frac{\mathrm{Per}(K)}{2\pi}\right]^{q+1},
\end{align}
where $C_q(K)$ denotes the chord functional in \eqref{eq:2.3x}
and
$\mathrm{Per}(K)$ denotes the \emph{perimeter} of $K$ in $\mathbb{R}^2$ (see Theorem \ref{thm:conv}).
The proof of \eqref{e1.2} is the most substantial part of this article. It combines a \emph{fractional Willmore-type
inequality} for convex curves, a first variation formula for the chord functional along outer parallel bodies,
and an asymptotic analysis as the parallel bodies expand. The fractional Willmore-type inequality
$$
W_s(K) \ge W_s(B_1)\left[\frac{\mathrm{Per}(K)}{2\pi}\right]^{1-s}
$$
(see Proposition \ref{prop:FW}) is established through a chord-length parametrization of convex boundaries
and an application of the degree theory of maps from the circle to the unit circle. Here $W_s(K)$ denotes
the fractional Willmore-type quantity (see \eqref{e3.1} for its definition).

Finally, using \eqref{e1.3} and \eqref{e1.2}, we prove \eqref{eq:main} and hence complete
the proof of Theorem \ref{thm:main}.

The remainder of this article is organized as follows.

In Section \ref{sec:chord}, we reduce the sharp fractional isoperimetric
inequality to a sharp inequality for a chord functional. Specifically, in
Subsection \ref{sec:chordfunctional}, we recall the chord functional and
prove a  formula which compares the fractional perimeter with this
functional. In Subsection \ref{sec:conn}, we reduce the problem from connected
domains to planar convex bodies.

In Section \ref{sec:convchord}, we prove the sharp convex
chord inequality (see Theorem \ref{thm:conv}). We begin by establishing Theorem \ref{thm:conn} as a
consequence of Theorem \ref{thm:conv}. Following this, we proceed to the proof of Theorem \ref{thm:main}.
Specifically, in Subsection \ref{sec:Willmore}, we establish a fractional
Willmore-type inequality for convex curves. In Subsection
\ref{sec:comparison}, we prove a derivative comparison for the chord
functional. In Subsection \ref{sec:limit}, we obtain the asymptotic behavior of
the chord functional along outer parallel bodies. Finally, in Subsection
\ref{sec:proof}, we complete the proof of
Theorem \ref{thm:conv}.

We restrict ourselves to the planar case because several essential steps of
our proof rely on geometric features that are special to dimension two.
First, for a connected planar set, its orthogonal projection onto any line is
an interval. This property, together with the fact that taking the convex hull
does not increase the perimeter in the plane, allows us to control the error
produced when the original set is replaced by its convex hull in the estimate
of the fractional perimeter. In higher dimensions, projections of connected
sets onto hyperplanes need not be convex, and the surface area of the convex
hull cannot, in general, be controlled by that of the original set.

Even for convex sets, the remaining argument is essentially planar. The
boundary of a planar convex body is a closed curve that can be described by a
single angular parameter and the corresponding supporting lines. In higher
dimensions, the boundary is a hypersurface rather than a curve, supporting
lines are replaced by supporting hyperplanes, and their normal directions can
no longer be described by a single scalar angle. As a result, the
one-dimensional identities used in the planar argument have no direct
higher-dimensional analogue.

Finally, the last part of the proof uses the particularly simple growth of
the outer parallel bodies $K+rB_1$. In the plane, the perimeter of $K+rB_1$
is a linear function of $r$, namely
$\operatorname{Per}(K+rB_1)
=\operatorname{Per}(K)+2\pi r.$
Thus, after normalizing $\operatorname{Per}(K)=2\pi$, the sets $K+rB_1$ and
$B_{r+1}$ have the same perimeter for every $r>0$, which makes it possible to
compare their chord functionals directly. In higher dimensions, the surface
area of $K+rB_1$ is instead a polynomial in $r$, whose coefficients depend on
additional geometric quantities of $K$, such as its curvature integrals.
Therefore, even if $K$ and a ball have the same surface area initially, their
parallel bodies generally have different surface areas for $r>0$, and the
comparison argument used in the plane no longer applies directly.

At the end of this section, we make some notational conventions. Let
${\mathbb N}:=\{1,2,\ldots\}$ and ${\mathbb Z}_+:={\mathbb N}\cup\{0\}$.
We always denote by $C$ a positive constant
which is independent of the main parameters involved,
but it may vary from line to line.
We also use $C_{\alpha,\beta,\ldots}$ to denote a
positive constant depending on the indicated parameters
$\alpha$, $\beta,\ldots$.
The notation $f\lesssim g$ means that $f\le Cg$.
If $f\lesssim g$ and $g\lesssim f$,
we then write $f\sim g$. If $f\le Cg$ and $g=h$
or $g\le h$, we then write $f\lesssim g=h$ or $f\lesssim g\le h$.
For any $x\in{\mathbb R}^2$ and $r\in(0,\infty)$, we denote by
$B(x,r):=\{y\in{\mathbb R}^2:\ |y-x|<r\}$
the ball with center $x$ and radius $r$; for simplicity,
we write $B_r:=B(\mathbf{0},r)$. We also use the \emph{notation} $\mathbb{S}^1:=\{x\in\mathbb{R}^2: |x|=1\}$
to denote the \emph{unit circle} in $\mathbb{R}^2$. For any $u\in\mathbb{S}^1$,
let $u^\perp:=\{z\in\mathbb R^2:\ z\cdot u=0\}$.
For any set $E\subset\mathbb R^2$, we use $\overline E$,
$\operatorname{int}E$, and $E^\complement$ to denote, respectively,
its closure, interior, and complement in $\mathbb R^2$.
We use $\mathbf{1}_E$ to denote the characteristic function of a
measurable set $E\subset {\mathbb R}^2$.
For any $x\in\mathbb R^2$ and any nonempty set
$E\subset\mathbb R^2$, let
$$
\operatorname{dist}(x,E):=\inf\{|x-y|:\ y\in E\}\ \ \text{and}\ \
\operatorname{diam}(E):=\sup\{|x-y|:\ x,y\in E\}.
$$
Moreover, the \emph{notation} $r\to0^+$ means that $r\in(0,\infty)$ and $r\to0$ and
the \emph{notation} $O(r)$ means that $\lim_{r\to\infty}\frac{r}{O(r)}$
exists and is finite. Finally, in all proofs we consistently retain the notation
introduced in the original theorem (or related statement).

\section{From Fractional Perimeter to the Chord Functional}\label{sec:chord}

This section consists of two subsections. Specifically,
in Subsection \ref{sec:chordfunctional} we introduce the chord functional
and prove a  comparison between   this
functional and the fractional perimeter. We also compute the value of the chord functional for the unit
disk, which fixes the sharp constant needed later.
In Subsection \ref{sec:conn}, we compare a connected smooth planar domain
with the closure of its convex hull. More precisely, using a perimeter
estimate and Crofton's formula, we reduce the desired chord inequality from
connected domains to planar convex bodies. This reduction will be used later in the
proof of Theorem \ref{thm:conn} and hence in the proof of Theorem
\ref{thm:main}.

\subsection{The Chord Functional}\label{sec:chordfunctional}

Let $\Omega\subset\mathbb{R}^2$ be a bounded open set,  $u\in\mathbb{S}^1$, and
$z\in u^\perp:=\{z\in\mathbb{R}^2:\ z\cdot u=0\}.$
Define
\begin{align}\label{zhixian}
\ell(u,z):=z+\mathbb{R} u:=\{z+tu:
t\in\mathbb{R}\}
\end{align}
and
\begin{align}\label{jiexian}
\Omega_{u,z}:=\{t\in\mathbb{R}:
z+tu\in \Omega\}.
\end{align}
Then it is easy to verify that
\begin{align*}
\Omega\cap \ell(u,z)
=\{z+tu:\ t\in\Omega_{u,z}\}.
\end{align*}
Moreover,  since $\Omega$ is open, it follows  that
$\Omega_{u,z}\subset \mathbb{R}$  is open. Thus,
$\Omega_{u,z}$
is a countable disjoint union of open intervals.  Let $\mathcal I(\Omega;u,z)$ denote the
family of its interval components. Then
\begin{align}\label{gao23}
\Omega_{u,z}=\bigcup_{J\in\mathcal I(\Omega;u,z)}J.
\end{align}
Let $q\in(0,1)$.
Define the \emph{chord functional} $C_q(\Omega)$
of $\Omega$ by setting
\begin{align}\label{eq:2.3x}
C_q(\Omega):=
\int_{\mathbb{S}^1}\int_{u^\perp}
\sum_{J\in\mathcal I(\Omega;u,z)} |J|^q\,dz\,d\sigma(u).
\end{align}
It is easy to check that, for any $\lambda\in (0,\infty)$,
\begin{align}\label{eq:shensuo}
C_q(\lambda \Omega)=\lambda^{q+1}C_q(\Omega).
\end{align}
Recall that a planar convex body is a compact convex subset in $\mathbb R^2$ with nonempty interior.
We also use $C_q$ to denote the chord functional of planar convex bodies. More precisely,
for any given planar convex body
$K$, let
\begin{align*}
C_q(K):=C_q(\operatorname{int}K),
\end{align*}
where $\operatorname{int}K$ denotes the \emph{interior} of $K$.
The following proposition compares the fractional perimeter with the chord functional and
shows that the comparison is sharp for planar convex bodies.
\begin{proposition}\label{prop:slicing11}
Let $\Omega\subset\mathbb{R}^2$ be a bounded open set and $s\in(0,1)$.  Then
\begin{align}\label{eq:slice}
P_s(\Omega)\le \frac{1}{s(1-s)}C_{1-s}(\Omega).
\end{align}
Moreover, if $\Omega=\operatorname{int}K$ for some planar convex body $K$,
then the equality holds in \eqref{eq:slice}.
\end{proposition}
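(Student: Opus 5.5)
Our plan is to use polar coordinates in the difference variable, which turns $P_s(\Omega)$ into an integral over lines of a one-dimensional functional. Writing $y=x+tu$ with $u\in\mathbb{S}^1$ and $t>0$, so that $dy=t\,dt\,d\sigma(u)$, we get
\begin{align*}
P_s(\Omega)=\int_{\mathbb{S}^1}\int_0^\infty \int_{\mathbb{R}^2}\mathbf{1}_\Omega(x)\mathbf{1}_{\Omega^\complement}(x+tu)\,dx\,\frac{dt}{t^{1+s}}\,d\sigma(u).
\end{align*}
For each fixed $u$ we decompose $x=z+\tau u$ with $z\in u^\perp$ and $\tau\in\mathbb{R}$; this is a Fubini step with unit Jacobian. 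The inner integral then becomes $\int_{u^\perp}\int_0^\infty |\{\tau\in\Omega_{u,z}:\ \tau+t\notin\Omega_{u,z}\}|\,t^{-1-s}\,dt\,dz$. The question is thus reduced to a one-dimensional statement: for every open set $A=\Omega_{u,z}\subset\mathbb{R}$, which is bounded since $\Omega$ is, set
\begin{align*}
F(A):=\int_0^\infty |\{\tau\in A:\ \tau+t\notin A\}|\,\frac{dt}{t^{1+s}}.
\end{align*}
The claim to prove is $F(A)\le \frac{1}{s(1-s)}\sum_{J\in\mathcal I(\Omega;u,z)}|J|^{1-s}$, with equality when $A$ is a single interval or empty.

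The single-interval case $A=(a,a+L)$ is a direct computation. The measure of $\{\tau\in A:\ \tau+t\notin A\}$ equals $\min(t,L)$. Hence
\begin{align*}
F(A)=\int_0^L t^{-s}\,dt+L\int_L^\infty t^{-1-s}\,dt=\frac{L^{1-s}}{1-s}+\frac{L^{1-s}}{s}=\frac{L^{1-s}}{s(1-s)}.
\end{align*}
For a general open $A=\bigcup_J J$, we use subadditivity. If $\tau\in J$ and $\tau+t\notin A$, then in particular $\tau+t\notin J$. Therefore
\begin{align*}
\{\tau\in A:\ \tau+t\notin A\}\subset\bigcup_J\{\tau\in J:\ \tau+t\notin J\},
\end{align*}
and the measure of the left-hand set is at most $\sum_J\min(t,|J|)$. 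Tonelli's theorem lets us interchange the sum and the $t$-integral, giving $F(A)\le\sum_J F(J)=\frac{1}{s(1-s)}\sum_J|J|^{1-s}$. Integrating this bound over $z\in u^\perp$ and $u\in\mathbb{S}^1$ yields \eqref{eq:slice}.

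For $\Omega=\operatorname{int}K$ with $K$ a planar convex body, each slice $\Omega_{u,z}$ is convex, so it is either empty or a single open interval. The single-interval computation above then holds with equality for every line, and equality in \eqref{eq:slice} follows.

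The only delicate point is measure-theoretic bookkeeping. We must check joint measurability of the integrands so that Tonelli applies. This holds because all integrands are nonnegative and built from indicators of open sets, so it is routine. We must also confirm that no boundary effects arise: the slices $\Omega_{u,z}$ are open, and lines meeting $\partial\Omega$ in positive length do not matter, since we work directly with $\mathbf{1}_\Omega$ and $\mathbf{1}_{\Omega^\complement}$ and never use the boundary.
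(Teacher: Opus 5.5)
Your proposal is correct and follows the paper's route: polar coordinates in the difference variable, slicing along lines $\ell(u,z)$, a componentwise comparison of each slice $\Omega_{u,z}$ with its interval components, and, for equality, the fact that slices of $\operatorname{int}K$ are empty or a single interval. The only cosmetic difference is that you stay with the one-sided functional and compute $\min(t,L)$ directly, whereas the paper symmetrizes to the two-sided one-dimensional $s$-perimeter (using $I_+=I_-$) and then uses $\mathbb{R}\setminus\Omega_{u,z}\subset\mathbb{R}\setminus J$.
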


\begin{proof}
By polar decomposition, we find that
\begin{align*}
P_s(\Omega)
&=\int_\Omega\int_{\mathbb{R}^2\setminus \Omega}\frac{\,dx\,dy}{|x-y|^{2+s}}
=\int_{\mathbb{R}^2}\int_{\mathbb{R}^2}
\frac{\mathbf 1_\Omega(x)\mathbf 1_{\mathbb{R}^2\setminus \Omega}(x+h)}{|h|^{2+s}}\,dh\,dx\notag\\
&=\int_{\mathbb{S}^1}\int_0^\infty\int_{\mathbb{R}^2}
\mathbf 1_\Omega(x)\mathbf 1_{\mathbb{R}^2\setminus \Omega}(x+ru)
r^{-1-s}\,dx\,dr\,d\sigma(u).
\end{align*}
Observe that, for any $u\in\mathbb{S}^1$ and $r\in (0,\infty)$,
\begin{align*}
\int_{\mathbb{R}^2}\mathbf 1_\Omega(x)\mathbf 1_{\mathbb{R}^2\setminus \Omega}(x+ru)\,dx
&=\int_{u^\perp}\int_\mathbb{R}\mathbf 1_\Omega(z+tu)\mathbf 1_{\mathbb{R}^2\setminus \Omega}(z+tu+ru)\,dt\,dz\\
&=\int_{u^\perp}\int_\mathbb{R}\mathbf 1_{\Omega_{u,z}}(t)\mathbf 1_{\mathbb{R}\setminus \Omega_{u,z}}(t+r)
\,dt\,dz.
\end{align*}
Thus,
\begin{align}\label{eq:shalf}
P_s(\Omega)=\int_{\mathbb{S}^1}\int_{u^\perp}\int_0^\infty\int_\mathbb{R}
\mathbf 1_{\Omega_{u,z}}(t)\mathbf 1_{\mathbb{R}\setminus \Omega_{u,z}}(t+r)
r^{-1-s}\,dt\,dr\,dz\,d\sigma(u).
\end{align}
Let $A\subset\mathbb{R}$ be bounded and measurable.  Define
\begin{align*}
I_+(A):=\int_0^\infty\int_\mathbb{R}\mathbf 1_{A}(t)\mathbf 1_{\mathbb{R}\setminus A}(t+r)
r^{-1-s}\,dt\,dr
\end{align*}
and
\begin{align*}
I_-(A):=\int_0^\infty\int_\mathbb{R}\mathbf 1_{A}(t)\mathbf 1_{\mathbb{R}\setminus A}(t-r)
r^{-1-s}\,dt\,dr.
\end{align*}
Then it is easy to verify that $I_+(A)=I_-(A)$ and
\begin{align*}
\int_A\int_{\mathbb{R}\setminus A}\frac{\,dt\,d\tau}{|t-\tau|^{1+s}}=2I_+(A).
\end{align*}
Using this and \eqref{eq:shalf}, we find that
\begin{align}\label{eq:sliced}
P_s(\Omega)=\frac12\int_{\mathbb{S}^1}\int_{u^\perp}
\int_{\Omega_{u,z}}\int_{\mathbb{R}\setminus \Omega_{u,z}}
\frac{\,dt\,d\tau}{|t-\tau|^{1+s}}\,dz\,d\sigma(u).
\end{align}
By \eqref{gao23}, we conclude that,  for any $u\in\mathbb{S}^1$ and $z\in u^\perp$,
\begin{align}\label{yin2.7}
\int_{\Omega_{u,z}}\int_{\mathbb{R}\setminus \Omega_{u,z}}
\frac{\,dt\,d\tau}{|t-\tau|^{1+s}}
&=\sum_{J\in\mathcal I(\Omega;u,z)}
\int_J\int_{\mathbb{R}\setminus \Omega_{u,z}}
\frac{\,dt\,d\tau}{|t-\tau|^{1+s}}  \notag\\
&\le\sum_{J\in\mathcal I(\Omega;u,z)}\int_J\int_{\mathbb{R}\setminus J}
\frac{\,dt\,d\tau}{|t-\tau|^{1+s}}=\sum_{J\in\mathcal I(\Omega;u,z)}
\frac{2|J|^{1-s}}{s(1-s)}.
\end{align}
From this and  \eqref{eq:sliced}, we infer that \eqref{eq:slice} holds.

Finally, observe that, for any $x\in \operatorname{int} K$, $y\in K$,
and $\lambda\in [0,1),$ $(1-\lambda)x+\lambda y\in \operatorname{int} K.$
This
implies that, for any $u\in\mathbb{S}^1$ and $z\in u^\perp,$
$(\operatorname{int}K)_{u,z}$ is  an open interval or emptyset. Thus,
the equality in \eqref{yin2.7} holds. This finishes the proof of Proposition  \ref{prop:slicing11}.
\end{proof}

\begin{lemma}\label{lem:disk}
Let $q\in(0,1)$.  Then
\begin{align}\label{eq:Cqdisk}
C_q(B_1)=2^{q+1}\pi\int_{-1}^{1}(1-t^2)^{\frac q2}\,dt
=\frac{2^{q+1}\pi^{\frac{3}{2}}\Gamma(1+\frac{q}{2})}{\Gamma(\frac{3}{2}+\frac{q }{2})}.
\end{align}
Moreover,
\begin{align}\label{eq:dlb}
C_q(B_1)> \frac{4\pi^{q+1}}{q+1}.
\end{align}
\end{lemma}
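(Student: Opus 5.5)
For the disk, every chord $(B_1)_{u,z}$ with $z\in u^\perp$, $|z|<1$, is a single interval of length $2\sqrt{1-|z|^2}$, and it is empty for $|z|\ge1$. The integrand in \eqref{eq:2.3x} is therefore independent of $u$. Parametrizing $u^\perp$ by $z=tu^{\perp}$ with $t\in\mathbb{R}$ and integrating over $\mathbb{S}^1$ (total measure $2\pi$) gives
$$
C_q(B_1)=2\pi\int_{-1}^1 2^q(1-t^2)^{\frac q2}\,dt=2^{q+1}\pi\int_{-1}^1(1-t^2)^{\frac q2}\,dt .
$$
This is the first equality in \eqref{eq:Cqdisk}.

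For the closed form, I substitute $t^2=x$, which by symmetry turns the integral into the Beta function:
$$
\int_{-1}^1(1-t^2)^{\frac q2}\,dt=\int_0^1x^{-\frac12}(1-x)^{\frac q2}\,dx=B\left(\tfrac12,1+\tfrac q2\right)=\frac{\sqrt{\pi}\,\Gamma(1+\frac q2)}{\Gamma(\frac32+\frac q2)} .
$$
Multiplying by $2^{q+1}\pi$ yields the second equality.

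For the strict lower bound \eqref{eq:dlb}, I need
$$
2^{q+1}\int_{-1}^1(1-t^2)^{\frac q2}\,dt>\frac{4\pi^q}{q+1},\quad\text{i.e.}\quad \int_{-1}^1(1-t^2)^{\frac q2}\,dt>\frac{2}{q+1}\left(\frac{\pi}{2}\right)^q .
$$
The natural route is log-convexity (Hölder) in $q$. Set
$$
F(q):=\int_{-1}^1(1-t^2)^{\frac q2}\,dt ,
$$
so that $F(0)=2$ and $F(1)=\pi/2$. By Hölder's inequality, $\log F$ is convex, but that gives only an upper bound, $F(q)\le 2^{1-q}(\pi/2)^q$, which is the wrong direction. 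So I compare instead with $G(q):=\frac{2}{q+1}(\pi/2)^q$, which satisfies $G(0)=2=F(0)$ and $G(1)=\pi/2=F(1)$. Both endpoints are equalities, so strictness on $(0,1)$ has to come from a concavity-type argument.

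I will show that $h(q):=\log F(q)-\log G(q)$ vanishes at $0$ and $1$ and is concave on $[0,1]$. Concavity is equivalent to
$$
(\log F)''<(\log G)''=\frac{1}{(q+1)^2}.
$$
Using the Gamma representation, $\log F(q)=\tfrac12\log\pi+\log\Gamma(1+\frac q2)-\log\Gamma(\frac32+\frac q2)$, so
$$
(\log F)''=\tfrac14\left[\psi'\left(1+\tfrac q2\right)-\psi'\left(\tfrac32+\tfrac q2\right)\right].
$$
With the series $\psi'(x)=\sum_{k\ge0}(x+k)^{-2}$, this bracket is an alternating-type sum. The estimate I would aim for is
$$
\psi'(x)-\psi'(x+\tfrac12)<\frac{1}{x^2}+\ldots,
$$
or more cleanly, a bound of the form $\psi'(x)-\psi'(x+\tfrac12)\le \frac{1}{2x^2}+\frac{1}{x^3}\cdot c$, to be compared with $4/(q+1)^2=1/x^2$ where $x=\frac{q+1}{2}+\frac12$. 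Verifying this inequality uniformly for $q\in[0,1]$ is the main obstacle. If the series bound is too lossy, I would fall back on a sharper comparison via the integral representation $\psi'(x)=\int_0^\infty \frac{se^{-xs}}{1-e^{-s}}\,ds$, or on a direct numerical/monotonicity check of $F/G$ on $[0,1]$.

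Once $h$ is concave with $h(0)=h(1)=0$ and not identically zero, we get $h>0$ on $(0,1)$. That is, $F(q)>G(q)$ there, which gives \eqref{eq:dlb}.
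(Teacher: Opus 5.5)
Your computation of $C_q(B_1)$ and its Beta/Gamma closed form is correct and matches the paper. The gap is in the strict lower bound \eqref{eq:dlb}.

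Your concavity strategy is sound: you compare $F(q)=\int_{-1}^1(1-t^2)^{q/2}\,dt$ with $G(q)=\frac{2}{q+1}(\frac{\pi}{2})^q$, which agree at $q=0$ and $q=1$. However, you stop exactly at the one estimate that carries the whole content. You call it ``the main obstacle'' and offer a numerical check as a fallback, which would not be a proof.

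Your target is also mis-transcribed. With $x=1+\frac q2$ one has $q+1=2x-1$, so $\frac{4}{(q+1)^2}=(x-\frac12)^{-2}$, not $x^{-2}$. What you actually need is
\[
\psi'(x)-\psi'\left(x+\tfrac12\right)<\left(x-\tfrac12\right)^{-2},\qquad x\in\left[1,\tfrac32\right].
\]
This closes in one line, with no asymptotic bounds. The function $\psi'(y)=\sum_{k\ge0}(y+k)^{-2}$ is strictly decreasing and satisfies $\psi'(y)-\psi'(y+1)=y^{-2}$, so
\[
\psi'(x)-\psi'\left(x+\tfrac12\right)<\psi'(x)-\psi'(x+1)=x^{-2}<\left(x-\tfrac12\right)^{-2}.
\]
Incidentally, this also shows that your mis-stated target $x^{-2}$ holds, so the slip is harmless. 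Hence $h''<0$ on $[0,1]$. Together with $h(0)=h(1)=0$, strict concavity gives $h>0$ on $(0,1)$, which is \eqref{eq:dlb}.

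For comparison, the paper avoids polygamma functions altogether. It writes $\int_{-1}^1(1-t^2)^{q/2}\,dt=2W_{q+1}$ with $W_p=\int_0^{\pi/2}\sin^p\theta\,d\theta$, and uses the Wallis recursion $W_{q+1}=\frac{q}{q+1}W_{q-1}$. Since $q-1<0$, the pointwise bound $\sin^{q-1}\theta>\theta^{q-1}$ on $(0,\frac\pi2)$ gives $W_{q-1}>\frac1q(\frac\pi2)^q$, and hence $C_q(B_1)>\frac{4\pi^{q+1}}{q+1}$. That argument is shorter and fully elementary. Your route, once completed, gives a little more: log-concavity of $F/G$, and the fact that the bound becomes an equality at $q=0$ and $q=1$. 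Neither is needed for the lemma.
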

\begin{proof}
Since  $B_1$ is a convex body, it follows that
\begin{align*}
C_q(B_1)=\int_{\mathbb{S}^1}\int_{u^\perp}|B_1\cap \ell(u,z )|^q\,dz\,d\sigma(u).
\end{align*}
Recall that, for any $a,b\in(0,\infty)$, the beta function $B(a,b)$ is defined by
\begin{align*}
B(a,b):=\int_0^1 r^{a-1}(1-r)^{b-1}\,dr.
\end{align*}
Moreover, it satisfies the identity
\begin{align*}
B(a,b)=\frac{\Gamma(a)\Gamma(b)}{\Gamma(a+b)},
\end{align*}
where $\Gamma$ denotes the Gamma function. Observe that, for any $u\in\mathbb{S}^1$,
\begin{align*}
\int_{u^\perp}|B_1\cap\ell(u,z )|^q\,dz&=
\int_{-1}^{1}\left(2\sqrt{1-t^2}\right)^q\,dt
=2^q\int_{-1}^1(1-t^2)^{\frac{q}{2}}\,dt\\
&=2^q \int_0^1 r^{-\frac{1}{2}}(1-r)^{\frac{q}{2}}\,dr
=2^qB\left(\frac12,1+\frac q2\right).
\end{align*}
Thus,
\begin{align*}
C_q(B_1)=2^{q+1}\pi B\left(\frac12,1+\frac q2\right)
=\frac{2^{q+1}\pi^{\frac{3}{2}}\Gamma(1+\frac{q}{2})}{\Gamma(\frac{3}{2}+\frac{q }{2})}.
\end{align*}
This proves \eqref{eq:Cqdisk}.

It remains to show \eqref{eq:dlb}.
For any $p\in\mathbb{R},$ let
$W_p:=\int_0^{\frac{\pi}{2}}\sin^p\theta\,d\theta.$
From the change of variables $t=\cos\theta$, we deduce that
\begin{align*}
\int_{-1}^1(1-t^2)^{\frac{q}{2}}\,dt=
\int_0^\pi \sin^{q+1}\theta\,d\theta=2W_{q+1}
\end{align*}
and hence
\begin{align*}
C_q(B_1)=2^{q+2}\pi W_{q+1}.
\end{align*}
Using integration by parts, we conclude that, for any $p\in (1,\infty),$
\begin{align*}
W_p&=-\int_0^{\frac{\pi}{2}}\sin^{p-1}\theta\,d\cos\theta
=(p-1)\int_0^{\frac{\pi}{2}}\sin^{p-2}\theta
\left(1-\sin^2\theta\right)\,d\theta   \\
&=(p-1)(W_{p-2}-W_p).
\end{align*}
This, together with the assumption that $q<1$,  implies that
\begin{align*}
C_q(B_1)&=2^{q+2}\pi W_{q+1}=\frac{2^{q+2}q\pi}{q+1}W_{q-1}
=\frac{2^{q+2}q\pi}{q+1}\int_0^{\frac{\pi}{2}}\sin^{q-1}\theta\,d\theta\\
&>\frac{2^{q+2}q\pi}{q+1}\int_0^{\frac{\pi}{2}}\theta^{q-1}\,d\theta=\frac{4\pi^{q+1}}{q+1}.
\end{align*}
This finishes the proof of Lemma \ref{lem:disk}.
\end{proof}

\subsection{Reduction from Connected Domains to Convex Bodies}\label{sec:conn}

Let $E\subset\mathbb{R}^2$ be a measurable set. Recall that the \emph{perimeter} $\mathrm{Per}(E)$ of $E$
is defined by setting
\begin{equation*}
\mathrm{Per}(E):=\sup\left\{\int_E \operatorname{div}\varphi(x)\,dx:
\varphi\in C_{\rm c}^1(\mathbb{R}^2;\mathbb{R}^2),\ |\varphi(x)|\le 1\ \text{for any }x\in\mathbb{R}^2\right\},
\end{equation*}
where $C_{\rm c}^1(\mathbb{R}^2;\mathbb{R}^2)$ denotes the space of all  continuously differentiable vector
fields from $\mathbb{R}^2$ to $\mathbb{R}^2$ with compact support.
Moreover,  $E$ is said to have
 \emph{finite perimeter} if $\mathrm{Per}(E)<\infty$. We have the following
conclusion on $\mathrm{Per}(E)$ and $\mathcal{H}^1(\partial E)$ (see, for instance,
\cite[Theorem 9.3]{m12} and \cite[Section 2.2]{s14}).

\begin{lemma}\label{lem2.1}
Let $E\subset\mathbb{R}^2$ be a bounded domain with $C^1$ boundary.  Then $E$ has finite perimeter and
$\mathrm{Per}(E)=\mathcal{H}^1(\partial E).$
Moreover, let $K\subset\mathbb{R}^2$ be a planar convex body, i.e., a compact convex set with nonempty interior.
Then $K$ has finite perimeter and
$\mathrm{Per}(K)=\mathcal{H}^1(\partial K).$
\end{lemma}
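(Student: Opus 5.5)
The statement is Lemma \ref{lem2.1}, which asserts two standard facts. The plan is to prove each by identifying the distributional perimeter with a boundary integral through the divergence theorem, with a separate approximation step for convex bodies.

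\textbf{The $C^1$ domain.} Since $E$ is bounded with $C^1$ boundary, $\partial E$ is a compact embedded $C^1$ curve, consisting of finitely many closed components. It has a continuous outer unit normal $\nu_E$, and $\mathcal H^1(\partial E)<\infty$. The divergence theorem holds for $C^1$ domains: a partition of unity together with local graph representations reduces it to the one-variable fundamental theorem of calculus. It gives, for every $\varphi\in C^1_{\rm c}(\mathbb R^2;\mathbb R^2)$,
$$\int_E\operatorname{div}\varphi\,dx=\int_{\partial E}\varphi\cdot\nu_E\,d\mathcal H^1 .$$
Taking $|\varphi|\le1$ yields $\mathrm{Per}(E)\le\mathcal H^1(\partial E)$.

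For the reverse inequality, extend $\nu_E$ to a continuous vector field $N$ on $\mathbb R^2$ with $|N|\le 1$; the Tietze extension theorem followed by radial truncation does this. Mollify $N$ and multiply by a cutoff to obtain fields $\varphi_k\in C^1_{\rm c}$ with $|\varphi_k|\le1$ and $\varphi_k\to\nu_E$ uniformly on $\partial E$. Then $\int_{\partial E}\varphi_k\cdot\nu_E\,d\mathcal H^1\to\mathcal H^1(\partial E)$, which gives equality.

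\textbf{The convex body.} I use inner approximation. Fix an interior point, which we may take to be $\mathbf 0$, and let $K_j:=(1-1/j)K$. Mollification does not directly produce convex sets, so instead take $K_j$ to be a smooth convex body squeezed between $(1-1/j)K$ and $K$, for example a sublevel set of a mollified gauge function. Then $\partial K_j$ is $C^1$, so by the first part $\mathrm{Per}(K_j)=\mathcal H^1(\partial K_j)$.

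It remains to pass to the limit on both sides.
\begin{enumerate}[(i)]
\item Since $\mathbf 1_{K_j}\to\mathbf 1_K$ in $L^1$, lower semicontinuity of the perimeter gives $\mathrm{Per}(K)\le\liminf_j\mathcal H^1(\partial K_j)$.
\item By Cauchy's formula $\mathcal H^1(\partial C)=\int_0^{\pi}w_C(\theta)\,d\theta$, where $w_C(\theta)$ is the width of $C$ in direction $\theta$, the boundary length of convex bodies is monotone under inclusion and continuous in the Hausdorff metric. Hence $\mathcal H^1(\partial K_j)\to\mathcal H^1(\partial K)$.
\item For the opposite inequality, $\partial K$ is Lipschitz: in polar coordinates about an interior point, the radial function is Lipschitz. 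The divergence theorem therefore holds on $K$ with the outer normal defined $\mathcal H^1$-a.e. Repeating the approximation-of-the-normal argument from the first part, now using Lusin's theorem to approximate the merely measurable normal, gives $\mathrm{Per}(K)\ge\mathcal H^1(\partial K)$.
\end{enumerate}
Both sides are finite because $\mathcal H^1(\partial K)\le\pi\operatorname{diam}K$.

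\textbf{Main obstacle.} The only delicate point is the regularity bookkeeping for a general convex boundary. Step (ii) handles it cleanly: once the perimeters of the smooth approximants converge to $\mathcal H^1(\partial K)$, everything else follows from the first part. Since the statement is classical, in the paper I would simply cite \cite[Theorem 9.3]{m12} and \cite[Section 2.2]{s14}, as the text already does.
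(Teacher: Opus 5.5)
Your argument is correct. The paper gives no proof of this lemma at all; it only refers to \cite[Theorem 9.3]{m12} and \cite[Section 2.2]{s14}. Your sketch is therefore in effect an unpacking of those standard facts. Your $C^1$ part is the usual argument: the divergence theorem gives one inequality, and a continuous extension plus mollification of $\nu_E$ gives the other. Your step (iii), which uses the Lipschitz regularity of convex boundaries together with Gauss--Green, is the natural content of the citations for convex bodies.

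Two remarks on the convex part.

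First, once you invoke the Gauss--Green theorem for the Lipschitz domain $\operatorname{int}K$ in (iii), it yields both inequalities at once, exactly as in the $C^1$ case. For $|\varphi|\le 1$ you have $\int_K\operatorname{div}\varphi\,dx=\int_{\partial K}\varphi\cdot\nu_K\,d\mathcal H^1\le\mathcal H^1(\partial K)$, which is the upper bound, and your Lusin-based approximation of $\nu_K$ gives the reverse. So the smooth inner approximants, the lower semicontinuity step (i), and Cauchy's formula in (ii) are superfluous.

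Second, your ``main obstacle'' paragraph misdescribes where the weight of the argument lies. Lower semicontinuity together with (ii) only gives $\mathrm{Per}(K)\le\mathcal H^1(\partial K)$. The reverse inequality does not ``follow from the first part''; it rests entirely on (iii), that is, on the Gauss--Green theorem for Lipschitz domains. Alternatively, it could rest on a monotonicity statement $\mathrm{Per}(K_j)\le\mathrm{Per}(K)$ for convex $K_j\subset K$, but that would need its own proof. As written, with (iii) included, the proof is complete.
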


Furthermore, let $\Omega\subset\mathbb{R}^2$ be a bounded domain (connected open set) and let  $\operatorname{conv}(\Omega)$ be
the intersection of all the convex subsets of $\mathbb{R}^2$ that contain $\Omega$.
Denote the closure of  $\operatorname{conv}(\Omega)$ by $\overline{\operatorname{conv}}(\Omega)$.
Then $\overline{\operatorname{conv}}(\Omega)$ is compact and has nonempty interior.
The following  proposition is the main result of this subsection.
\begin{proposition}\phantomsection\label{prop:excess}
\begin{itemize}
  \item [$\mathrm{(i)}$]Let $K$ be a planar convex body.  Then
\begin{align*}
  \operatorname{diam}(K)\le \frac{\operatorname{Per}(K)}{2}.
\end{align*}
\item [$\mathrm{(ii)}$]
Let $\Omega\subset\mathbb{R}^2$ be a bounded  domain with $C^1$ boundary, $K:=
\overline{\operatorname{conv}}(\Omega)$, and  $q\in(0,1)$. Then
\begin{align}\label{eq:hullper}
\operatorname{Per}(K)\le \operatorname{Per}(\Omega)
\end{align}
and
\begin{align}\label{eq:excess}
C_q(\Omega)-C_q(K)\leq
2[\operatorname{diam}(K)]^q\left[\operatorname{Per}(\Omega)-\operatorname{Per}(K)\right].
\end{align}
\end{itemize}
\end{proposition}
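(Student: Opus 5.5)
For (i), I will take two points $x,y\in K$ with $|x-y|=\operatorname{diam}(K)$ (they exist by compactness). The line through $x$ and $y$ splits $\partial K$ into two arcs, each joining $x$ to $y$. Each arc has length at least $|x-y|$, since a curve is never shorter than its chord. Then Lemma \ref{lem2.1} gives $\operatorname{Per}(K)=\mathcal H^1(\partial K)\ge 2|x-y|$.

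For (ii), the main tool is Crofton's formula in the following form. For a bounded open set $E$ with rectifiable boundary,
\begin{align*}
\int_{\mathbb S^1}\int_{u^\perp}\#\big(\partial E\cap\ell(u,z)\big)\,dz\,d\sigma(u)=2\mathcal H^1(\partial E).
\end{align*}
For a convex body, a.e.\ line meeting $\operatorname{int}K$ crosses $\partial K$ exactly twice. For $\Omega$, every interval component $J$ of $\Omega_{u,z}$ contributes two endpoints on $\partial\Omega$.

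I will first prove \eqref{eq:hullper}. Fix $u$ and $z$ with $\ell(u,z)\cap\operatorname{int}K\neq\emptyset$. Since $\Omega$ is connected, its projection onto $u^\perp$ is an interval, and this interval has the same interior as the projection of $K$. So for a.e.\ such $z$ the line meets $\Omega$, and hence $\#(\partial\Omega\cap\ell)\ge 2=\#(\partial K\cap\ell)$. Integrating and applying Crofton's formula to both sets, together with Lemma \ref{lem2.1}, gives $\operatorname{Per}(K)\le\operatorname{Per}(\Omega)$.

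For \eqref{eq:excess}, I argue line by line. Write $N(u,z):=\#\mathcal I(\Omega;u,z)$, so that a.e.\ $\#(\partial\Omega\cap\ell)\ge 2N$. Each $J\in\mathcal I(\Omega;u,z)$ lies in $K_{u,z}$, so $|J|\le \operatorname{diam}(K)$. Since $N\ge1$ on a.e.\ line meeting $\operatorname{int}K$, I can single out one component $J_0$, which satisfies $|J_0|^q\le|K_{u,z}|^q$. The remaining components each contribute at most $\operatorname{diam}(K)^q$. This gives the pointwise bound
\begin{align*}
\sum_{J}|J|^q-|K_{u,z}|^q\le (N-1)[\operatorname{diam}(K)]^q\le \frac{\#(\partial\Omega\cap\ell)-\#(\partial K\cap\ell)}{2}[\operatorname{diam}(K)]^q.
\end{align*}
On lines missing $\operatorname{int}K$, both sides vanish a.e. Integrating over $(u,z)$ and using Crofton's formula for $\Omega$ and for $K$ then gives
\begin{align*}
C_q(\Omega)-C_q(K)\le [\operatorname{diam}(K)]^q\big(\mathcal H^1(\partial\Omega)-\mathcal H^1(\partial K)\big).
\end{align*}
This is in fact better than the stated bound by a factor of $2$, so the factor $2$ in \eqref{eq:excess} leaves room for any normalization slack in Crofton's constant.

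The main obstacle is justifying Crofton's formula with the right normalization, together with the a.e.\ statements, for a $C^1$ domain. Concretely, I need that for a.e.\ line, $\ell\cap\partial\Omega$ is finite and consists of transversal crossings, so that every endpoint of a component $J$ is a distinct boundary point. I will obtain this from the area/coarea formula applied to the $C^1$ curve $\partial\Omega$: $\partial\Omega$ is a finite union of $C^1$ Jordan curves, and Sard-type arguments show that tangential lines have measure zero. I will also need the fact that a.e.\ line meeting $\operatorname{int}K$ meets $\Omega$, which follows from connectedness: the projection of $\Omega$ is an open interval whose closure equals the projection of $K$.
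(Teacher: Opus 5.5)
Your proposal follows the paper's proof essentially step for step. Part (i) splits the Jordan curve $\partial K$ at a diametral pair, which necessarily lies on $\partial K$. Part (ii) uses the same a.e.\ pointwise bound $\sum_{J}|J|^q\le |K_{u,z}|^q+(N-1)[\operatorname{diam}(K)]^q$ on lines meeting $\operatorname{int}K$, resting on the fact that the projection of the connected set $\Omega$ is an interval with the same interior as that of $K$. It then uses Sard-type transversality for $\partial\Omega$ and Crofton's formula to integrate.

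One correction: your Crofton constant is wrong by a factor of $2$. Here $\sigma$ is arclength on the whole of $\mathbb S^1$, as in the definition of $C_q$, so every line is counted twice (via $u$ and $-u$). The formula therefore reads
\[
\int_{\mathbb S^1}\int_{u^\perp}\#\big(\Gamma\cap\ell(u,z)\big)\,dz\,d\sigma(u)=4\mathcal H^1(\Gamma),
\]
which you can check on $\partial B_1$, where the left side is $4\cdot 2\pi=8\pi$. With this constant your integration gives exactly $2[\operatorname{diam}(K)]^q[\operatorname{Per}(\Omega)-\operatorname{Per}(K)]$. The claimed factor-$2$ improvement does not exist; the slack you left for the normalization is used up exactly.
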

We first make some preparations for the proof of Proposition \ref{prop:excess}.
For any given $u\in\mathbb{S}^1$ and any $x\in\mathbb{R}^2$, let
\begin{align*}
P_u^\perp x:=x-(x\cdot u)u.
\end{align*}
We then have the following lemma.
\begin{lemma}\label{lem:projconn}
Let $\Omega\subset\mathbb{R}^2$ be a bounded   domain. Then, for any $u\in\mathbb{S}^1,$
$P_u^\perp(\Omega)\subset u^\perp$ is an
interval and
\begin{align}\label{eq:projs}
\overline{P_u^\perp(\Omega)}=P_u^\perp(\overline{\operatorname{conv}}(\Omega)).
\end{align}
\end{lemma}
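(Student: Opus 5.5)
The argument splits into two parts: connectedness gives that the projection is an interval, and a short set-inclusion argument gives \eqref{eq:projs}.

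\emph{The projection is an interval.} Fix $u\in\mathbb{S}^1$. The map $P_u^\perp:\mathbb{R}^2\to u^\perp$ is linear, hence continuous. Since $\Omega$ is a domain, it is connected, so $P_u^\perp(\Omega)$ is a connected subset of the line $u^\perp$. Identify $u^\perp$ with $\mathbb{R}$ through $z=\tau u^\perp_0$, where $u_0^\perp$ is a fixed unit vector orthogonal to $u$. Connected subsets of $\mathbb{R}$ are intervals, so $P_u^\perp(\Omega)$ is an interval. It is bounded because $\Omega$ is bounded, and it is open because $P_u^\perp$ is an open map onto $u^\perp$ (indeed $P_u^\perp(B(x,r))=B(P_u^\perp x,r)\cap u^\perp$). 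Only the interval property is needed, however.

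\emph{The inclusion $\overline{P_u^\perp(\Omega)}\subset P_u^\perp(\overline{\operatorname{conv}}(\Omega))$.} Since $\Omega\subset \overline{\operatorname{conv}}(\Omega)$, we have $P_u^\perp(\Omega)\subset P_u^\perp(\overline{\operatorname{conv}}(\Omega))$. The set $\overline{\operatorname{conv}}(\Omega)$ is compact and $P_u^\perp$ is continuous, so the right-hand side is compact, hence closed. Taking closures gives the inclusion.

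\emph{The reverse inclusion $P_u^\perp(\overline{\operatorname{conv}}(\Omega))\subset\overline{P_u^\perp(\Omega)}$.} Let $I:=\overline{P_u^\perp(\Omega)}$; by the first part it is a closed segment in $u^\perp$, so it is convex. The preimage $S:=(P_u^\perp)^{-1}(I)$ is a closed strip. It is convex because $P_u^\perp$ is linear and $I$ is convex, and it is closed by continuity. Since $\Omega\subset S$ and $S$ is convex, the definition of $\operatorname{conv}(\Omega)$ as the intersection of all convex sets containing $\Omega$ gives $\operatorname{conv}(\Omega)\subset S$. Because $S$ is closed, also $\overline{\operatorname{conv}}(\Omega)\subset S$. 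Applying $P_u^\perp$ yields $P_u^\perp(\overline{\operatorname{conv}}(\Omega))\subset I$. Combined with the previous inclusion, this proves \eqref{eq:projs}.

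No step is a real obstacle. The only point needing care is that connectedness is what makes $I$ convex: for a disconnected $\Omega$ the projection could be a union of disjoint intervals and the reverse inclusion would fail. This is exactly where the hypothesis that $\Omega$ is a domain enters, and it is the planar feature emphasized in the introduction.
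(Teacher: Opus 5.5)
Your proof is correct and follows essentially the same route as the paper: connectedness makes $P_u^\perp(\Omega)$ an interval, compactness of $P_u^\perp(\overline{\operatorname{conv}}(\Omega))$ gives one inclusion, and convexity of the interval gives the other. The only difference is in the reverse inclusion, and it is cosmetic. The paper uses $P_u^\perp(\operatorname{conv}(\Omega))=\operatorname{conv}(P_u^\perp(\Omega))$ together with $P_u^\perp(\overline{A})\subset\overline{P_u^\perp(A)}$, whereas you pull the closed segment back to a closed convex strip containing $\Omega$, which handles the convex hull and the closure in one step.
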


\begin{proof}
Let $u\in\mathbb{S}^1$.
Since $P^\perp_u$ is continuous and $\Omega$ is connected, it follows that
$P_u^\perp(\Omega)\subset u^\perp$ is an interval.

By the fact that $P_u^\perp$ is linear, we conclude that
\begin{align}\label{eq:hull} P_u^\perp(\operatorname{conv}(\Omega))=
\operatorname{conv}\left(P_u^\perp(\Omega)\right).
\end{align}
From the assumption that $\Omega$ is bounded,  we infer that $\overline{\operatorname{conv}}(\Omega)$ is compact
and hence $P_u^\perp(\overline{\operatorname{conv}}(\Omega))$ is compact. This, combined with the fact that
$P_u^\perp(\operatorname{conv}(\Omega)) \subset P_u^\perp(\overline{\operatorname{conv}}(\Omega)),$
implies that
\begin{align}\label{eq:first} \overline{P_u^\perp(\operatorname{conv}(\Omega))}
\subset P_u^\perp(\overline{\operatorname{conv}}(\Omega)). \end{align}
Conversely, using the fact that $P^\perp_{u}$ is continuous, we find that
$P_u^\perp(\overline{\operatorname{conv}}(\Omega)) \subset \overline{P_u^\perp
(\operatorname{conv}(\Omega))}.$ This,  together with \eqref{eq:first}, implies that
$ P_u^\perp(\overline{\operatorname{conv}}(\Omega)) = \overline{P_u^\perp(\operatorname{conv}(\Omega))}.$
By this and \eqref{eq:hull}, we have
\begin{align*}
\overline{P_u^\perp(\Omega)}=\overline{\operatorname{conv}(P_u^\perp(\Omega))}
=\overline{P_u^\perp(\operatorname{conv}(\Omega))}
=P_u^\perp(\overline{\operatorname{conv}}(\Omega)).
\end{align*}
This finishes the proof of Lemma \ref{lem:projconn}.
\end{proof}

The following Crofton intersection formula for rectifiable plane curves will be used below
(see, for instance, \cite[Theorem 5]{ad97} and \cite[(3.17)]{s04}).
\begin{lemma}\label{lem:crofton}
Let $\Gamma\subset\mathbb{R}^2$ be a rectifiable  curve.
For any $u\in\mathbb{S}^1$ and $z\in u^\perp$, let
\begin{align*}
M_\Gamma(u,z):=\#(\ell(u,z)\cap\Gamma),
\end{align*}
where $\ell(u,z)$ is the same as in \eqref{zhixian} and $\#(\ell(u,z)\cap\Gamma)$
denotes the number of intersection points of the two sets $\ell(u,z)$ and $\Gamma$. Then
\begin{align*}
\int_{\mathbb{S}^1}\int_{u^\perp}M_\Gamma(u,z)\,dz\,d\sigma(u)
=4\mathcal{H}^1(\Gamma).
\end{align*}
\end{lemma}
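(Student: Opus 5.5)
The plan is to prove the formula first for a single direction $u$ using the one-dimensional area formula, and then to integrate over $u\in\mathbb{S}^1$ using an elementary computation for unit vectors. I interpret a rectifiable curve $\Gamma$ as the image of an injective Lipschitz map. A closed curve is handled the same way, since it is injective except at one endpoint, and a single point does not affect any of the integrals below.

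\emph{Step 1: parametrization and one direction.} Let $L:=\mathcal{H}^1(\Gamma)$. Parametrize $\Gamma$ by arc length through an injective $1$-Lipschitz map $\gamma:[0,L]\to\mathbb{R}^2$, so that $|\gamma'(t)|=1$ for almost every $t$. Fix $u\in\mathbb{S}^1$ and set $F_u:=P_u^\perp\circ\gamma:[0,L]\to u^\perp$. Since $u^\perp$ is a line, $F_u$ is a Lipschitz map between one-dimensional spaces. The area formula then gives
\begin{align*}
\int_{u^\perp}\#\left(F_u^{-1}(z)\right)dz=\int_0^L\left|P_u^\perp\gamma'(t)\right|dt.
\end{align*}
A point $\gamma(t)$ lies on $\ell(u,z)$ exactly when $F_u(t)=z$. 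Because $\gamma$ is injective, this yields $\#(F_u^{-1}(z))=M_\Gamma(u,z)$. The area formula also supplies the measurability of $z\mapsto M_\Gamma(u,z)$.

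\emph{Step 2: integration over directions.} Joint measurability of $(u,z)\mapsto M_\Gamma(u,z)$ follows by writing $M_\Gamma$ as an increasing limit of counts over dyadic partitions of $[0,L]$. With that in hand, Tonelli's theorem gives
\begin{align*}
\int_{\mathbb{S}^1}\int_{u^\perp}M_\Gamma(u,z)\,dz\,d\sigma(u)=\int_0^L\int_{\mathbb{S}^1}\left|P_u^\perp\gamma'(t)\right|d\sigma(u)\,dt.
\end{align*}
For a unit vector $w$ we have $|P_u^\perp w|=|\sin\angle(u,w)|$. Hence
\begin{align*}
\int_{\mathbb{S}^1}|P_u^\perp w|\,d\sigma(u)=\int_0^{2\pi}|\sin\theta|\,d\theta=4.
\end{align*}
Since $|\gamma'(t)|=1$ almost everywhere, the inner integral equals $4$ for almost every $t$, and the total is $4L=4\mathcal{H}^1(\Gamma)$.

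\emph{Main obstacle and fallback.} The main difficulty is not the computation. It lies in justifying the identification $\#(F_u^{-1}(z))=M_\Gamma(u,z)$ and the joint measurability, which are exactly where injectivity and the precise notion of rectifiable curve enter. If one prefers to avoid the area formula, there is an alternative route. First verify the identity directly for a segment: the integral over $z$ equals $L|\sin\angle(u,v)|$, and integrating over $u$ gives $4L$. Next pass to inscribed polygons by additivity; the endpoints contribute only a null set of lines. Finally pass to the limit $\Gamma$. Here $M$ of the inscribed polygons increases pointwise to $M_\Gamma$ along refining partitions, up to a null set of lines through vertices. Monotone convergence then applies, while the polygon lengths increase to $L$.
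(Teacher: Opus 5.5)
Your proof is correct. The paper gives no proof of Lemma~\ref{lem:crofton} to compare it with: it only quotes the result from the integral-geometry literature (Ayari--Dubuc and Santal\'o). You apply Banach's indicatrix theorem (the one-dimensional area formula) to $F_u=P_u^\perp\circ\gamma$ for each fixed $u$, then use Tonelli and $\int_{\mathbb{S}^1}|P_u^\perp w|\,d\sigma(u)=\int_0^{2\pi}|\sin\theta|\,d\theta=4$. This is essentially Cauchy's projection formula for length combined with Banach's theorem.

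What your route buys is a self-contained argument. The constant $4$ becomes transparent: each unoriented line is counted twice, via $u$ and $-u$, which is why the classical $2L$ becomes $4L$ here. It also isolates the one place injectivity matters, namely $\#F_u^{-1}(z)=M_\Gamma(u,z)$.

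The price is scope. You read ``rectifiable curve'' as a simple arc or simple closed curve, so you do not cover images of self-overlapping curves. There $M_\Gamma$ and $\mathcal{H}^1(\Gamma)$ ignore multiplicity, and one needs more, e.g.\ the general Crofton formula for $1$-rectifiable sets. This is harmless for the paper, which applies the lemma only to the rectifiable Jordan curve $\partial K$ and to $\partial\Omega$, a finite disjoint union of $C^1$ Jordan curves. For $\partial\Omega$ you should add the one-line remark that both sides are additive over disjoint components.

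Two minor points.
\begin{enumerate}
\item Joint measurability of $M_\Gamma$ is not needed. Your Step 1 already shows that $u\mapsto\int_{u^\perp}M_\Gamma(u,z)\,dz=\int_0^L|P_u^\perp\gamma'(t)|\,dt$ is continuous, and Tonelli is only used for the jointly measurable integrand $(u,t)\mapsto|P_u^\perp\gamma'(t)|$.
\item In your fallback, the polygon count must be the sum over segments rather than the set count, since inscribed polygons may overlap. Also, its pointwise convergence to $M_\Gamma$ fails not only on lines through vertices but also on lines that touch $\Gamma$ without crossing it, e.g.\ a tangent line at a non-vertex point of a circular arc. These lines still form a null set: for fixed $u$, with $u^\perp$ identified with $\mathbb{R}$, the local extremum values of the continuous function $F_u$ form a countable set. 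This needs to be said.
\end{enumerate}
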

Using Lemma \ref{lem:crofton}, we obtain the following conclusion.
\begin{lemma}\label{lem:count}
\begin{itemize}
\item [$\mathrm{(i)}$] Let $\Omega\subset\mathbb{R}^2$
be a bounded domain with $C^1$ boundary.
For any $u\in\mathbb{S}^1$ and
$z\in u^\perp$, let $N_\Omega(u,z)\in\mathbb{Z}_+\cup\{\infty\}$ denote the number of
interval components of the open subset $\Omega_{u,z}\subset\mathbb{R}$
defined in \eqref{jiexian}.
Then
\begin{align}\label{eq:jifenOmega}
\int_{\mathbb{S}^1}\int_{u^\perp}N_\Omega(u,z)\,dz\,d\sigma(u)=2\operatorname{Per}(\Omega).
\end{align}
\item[$\mathrm{(ii)}$] Let  $K$ be  a planar convex body. Then
\begin{align}\label{eq:jifenK}
\int_{\mathbb{S}^1}\int_{u^\perp}N_K(u,z)\,dz\,d\sigma(u)=2\operatorname{Per}(K),
\end{align}
where, for any $u\in\mathbb{S}^1$ and $z\in u^\perp$,
\begin{align*}
N_K(u,z)=\begin{cases}1, & \ell(u,z)\cap K\ne\emptyset,\\
0, & \ell(u,z)\cap K=\emptyset.
\end{cases}
\end{align*}
\end{itemize}
\end{lemma}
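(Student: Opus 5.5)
The plan is to reduce both identities to Crofton's formula (Lemma \ref{lem:crofton}) applied to the boundary curve, by relating the number of interval components of a slice to the number of boundary points on the corresponding line. For (ii) this is immediate. When $\ell(u,z)$ meets $\operatorname{int}K$, the slice $(\operatorname{int}K)_{u,z}$ is a single open interval, by the convexity argument at the end of the proof of Proposition \ref{prop:slicing11}, and the line meets $\partial K$ in exactly two points. The lines that meet $K$ but not $\operatorname{int}K$ are supporting lines. For each $u$ there are at most two such $z$, so they form a null set. Hence $M_{\partial K}(u,z)=2N_K(u,z)$ for a.e.\ $(u,z)$. Lemma \ref{lem:crofton} with $\Gamma=\partial K$, which is a rectifiable closed curve, then gives $4\mathcal H^1(\partial K)=2\int\int N_K$, and Lemma \ref{lem2.1} turns this into \eqref{eq:jifenK}.

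For (i), I would set $\Gamma=\partial\Omega$. Since $\Omega$ is bounded with $C^1$ boundary, $\partial\Omega$ is a finite union of disjoint $C^1$ Jordan curves, so it is rectifiable with $\mathcal H^1(\partial\Omega)=\operatorname{Per}(\Omega)$ by Lemma \ref{lem2.1}. The key claim is that for a.e.\ $(u,z)$ the line $\ell(u,z)$ meets $\partial\Omega$ in finitely many points, all of them transversal, and that in this case $M_{\partial\Omega}(u,z)=2N_\Omega(u,z)$. The counting identity holds for the following reason. The slice $\Omega_{u,z}$ is bounded and open. Each endpoint of each component interval corresponds to a point of $\ell\cap\partial\Omega$. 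Conversely, at a transversal crossing point $p$, the local graph representation of $\partial\Omega$ near $p$ shows that $\Omega$ lies on exactly one side of $p$ along the line. So $p$ is an endpoint of exactly one component and is not shared by two components, since two abutting components would require $\Omega$ on both sides. This gives a bijection between the intersection points and the endpoints of components, and therefore $M=2N$. Integrating and applying Lemma \ref{lem:crofton} gives $\int\int N_\Omega=\tfrac12\cdot 4\,\mathcal H^1(\partial\Omega)=2\operatorname{Per}(\Omega)$.

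The main obstacle is the a.e.\ transversality claim for merely $C^1$ boundaries. For a fixed direction $u$, a line $\ell(u,z)$ is tangent to $\partial\Omega$ at some point only if $z$ is a critical value of the $C^1$ map $\partial\Omega\ni x\mapsto P_u^\perp x$. That map goes from a one-dimensional manifold to the line $u^\perp$, so Sard's theorem applies with $C^1$ regularity (in dimension $1\to1$, $C^1$ suffices), and the critical values have measure zero. Fubini over $u$ then handles all directions. Finiteness of $M$ a.e.\ follows from Crofton itself, since $\int\int M<\infty$. Finally, if $M$ is finite and every intersection is transversal, then $N$ is finite and the bijection above applies. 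Points with $M=\infty$ or with a tangency form a null set, so they do not affect either integral, even where $N_\Omega$ may take the value $\infty$.
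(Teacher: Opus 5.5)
Your proposal is correct and follows essentially the same route as the paper. In both, Crofton's formula reduces each part to the almost-everywhere identity $M=2N$. For (i), that identity comes from Sard's theorem applied to the $C^1$ projection of $\partial\Omega$ onto $u^\perp$, together with the local sign change of $\mathbf 1_\Omega$ at transversal crossings; for (ii), only the at most two supporting lines parallel to $u$ are exceptional. The only cosmetic difference is that you phrase the count in (i) as a bijection between crossing points and component endpoints, whereas the paper phrases it as the alternation of $\mathbf 1_\Omega$ along the line.
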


\begin{proof}
We first prove \eqref{eq:jifenOmega}.
Using Lemma \ref{lem:crofton},  we conclude that, to show \eqref{eq:jifenOmega}, it
suffices to prove that, for any $u\in\mathbb{S}^1$ and almost every $z\in u^\perp,$
\begin{align}\label{eq:jiaodianOmega}
M_{\partial\Omega}(u,z)=2N_\Omega(u,z).
\end{align}
Let $u:=(u_1,u_2)\in\mathbb{S}^1$ and  $q_u:=(u_2,-u_1)$. Then $q_u$ spans
$u^\perp$, and any $z\in u^\perp$ can be written uniquely as $z=\zeta q_u$ for some
$\zeta\in\mathbb{R}$. Let
$\partial\Omega=\Gamma_1\cup\cdots\cup\Gamma_N,$
where  $\{\Gamma_i\}_{i=1}^N $ are  $C^1$ Jordan curves.
Let $i\in\{1,\ldots,N \}$ and
let $\gamma_i:\ [0,1]\to \Gamma_i$  be
 a  $C^1$ parametrization of $\Gamma_i$. For any $s\in [0,1]$, let
\begin{align*}
  g_i(s):=\gamma_i(s) q_u.
\end{align*}
By Sard's theorem, for almost every $\zeta\in\mathbb{R}$, the number $\zeta$
is a regular value of $g_i$. For such $\zeta$, with $z:=\zeta q_u$, the
line $\ell(u,z)$ intersects $\partial\Omega$ only in finitely many points
and all these intersections are transversal.
Let these intersection points be ordered along the line such that
\begin{align*}
  \ell(u,z)\cap\partial\Omega
  =
  \{z+t_1u,\ldots,z+t_m u\}
\end{align*}
with $t_1<\cdots<t_m.$
Observe that, on each interval component of
$
\mathbb{R}\setminus\{t_1,\ldots,t_m\},
$
the function
$
t\mapsto \mathbf{1}_\Omega(z+tu)
$
is constant. Moreover, by transversality and the assumption that $\partial\Omega$ is $C^1$,
it is easy to verify  that
this constant changes when $t$
passes through each $t_j$.
Thus,  the values of $\mathbf{1}_\Omega(z+tu)$ alternate between $0$ and $1$,
starting and ending with $0$. Consequently,  $m$ is even and
\begin{align*}
  \Omega_{u,z}
  =
  (t_1,t_2)\cup(t_3,t_4)\cup\cdots\cup(t_{m-1},t_m).
\end{align*}
This implies that  \eqref{eq:jiaodianOmega}  holds.

Since $K$  is a planar convex  body, it follows that  $\partial K$ is a rectifiable Jordan curve
(see, for instance, \cite[p. 15]{t06} and \cite[Theorem 32]{lrs95}).
Using this and Lemma \ref{lem:crofton}, we find that, to show \eqref{eq:jifenK}, it  suffices to prove that,
for any $u\in\mathbb{S}^1$ and almost every $z\in u^\perp,$
\begin{align}\label{eq:jiaodianK}
M_{\partial K}(u,z)=2N_K(u,z).
\end{align}
Let $u\in\mathbb{S}^1$ and
\begin{align}\label{eq:yibian}
Z_u:=P_u^\perp(K)\setminus \operatorname{int}_{u^\perp}P_u^\perp(K),
\end{align}
where the interior is taken relative to the line $u^\perp$. Since
$P_u^\perp(K)$ is a compact interval in $u^\perp$,
it follows that  the set $Z_u$ consists of at most two points.
Let $z\in u^\perp\setminus Z_u$.  If $z\notin P_u^\perp(K)$, then
$\ell(u,z)\cap K=\emptyset$ and hence $N_K(u,z)=M_{\partial K}(u,z)=0$, which implies
that \eqref{eq:jiaodianK} holds in this case.
Let $z\in \operatorname{int}_{u^\perp}P_u^\perp(K).$
Then $N_K(u,z)=1.$ Moreover, the line $\ell(u,z)$ intersects $K$ in a nondegenerate compact interval. Thus,
$M_{\partial K}(u,z)=2$ and \eqref{eq:jiaodianK} holds in this case.
This finishes the proof of Lemma \ref{lem:count}.
\end{proof}

\begin{proof}[Proof of Proposition \ref{prop:excess}]
We first show (i). By the fact that  $K$ is compact,  we find  that there exist $a,b\in\partial K $ such that
$|a-b|=\operatorname{diam}(K).$
From this and the fact that  $\partial K$ is a rectifiable Jordan curve
(see, for instance, \cite[p. 15]{t06} and \cite[Theorem 32]{lrs95}),
we deduce that the boundary $\partial K$ consists of two curves joining $a$ and $b$.  Denote them, respectively,
by $\Gamma_1$ and $\Gamma_2.$ Observe that
\begin{align*}
\mathcal H^1(\Gamma_1)\ge |a-b|\ \ \text{and}\ \
\mathcal H^1(\Gamma_2)\ge |a-b|.
\end{align*}
This implies  that
\begin{align*}
\operatorname{Per}(K)=\mathcal H^1(\Gamma_1)+\mathcal H^1(\Gamma_2)\ge
2|a-b|=2\operatorname{diam}(K).
\end{align*}
This proves (i).

Now, we  show \eqref{eq:excess}.
For any $u\in\mathbb{S}^1$ and $z\in u^\perp$, define
\begin{align*}
S_\Omega(u,z):=\sum_{I\in\mathcal I(\Omega;u,z)} |I|^q\ \ \text{and}\ \ K_{u,z}:=\{t\in\mathbb{R}:
z+tu\in K\},
\end{align*}
where $\mathcal I(\Omega;u,z)$ is the same as in \eqref{gao23}. Then
\begin{align*}
C_q(\Omega)=\int_{\mathbb{S}^1}\int_{u^\perp}S_\Omega(u,z)\,dz\,d\sigma(u).
\end{align*}
We claim that, for any $u\in\mathbb{S}^1$ and  almost every
$z\in u^\perp$,
\begin{align}\label{eq:olex}
S_\Omega(u,z)\le |K_{u,z}|^q+\left[N_\Omega(u,z)-N_K(u,z)\right][\operatorname{diam}(K)]^q.
\end{align}
If this claim  holds, then, by the definition of $C_q(K )$ and
Lemma \ref{lem:count}, we conclude that
\begin{align*}
C_q(\Omega)&\le\int_{\mathbb S^1}\int_{u^\perp}|K_{u,z}|^q\,dz\,d\sigma(u) \\
&\quad+[\operatorname{diam}(K)]^q\int_{\mathbb S^1}\int_{u^\perp}
\left[N_\Omega(u,z)-N_K(u,z)\right]\,dz\,d\sigma(u)\\
&=C_q(K)+2[\operatorname{diam}(K)]^q
\left[\operatorname{Per}(\Omega)-\operatorname{Per}(K)\right].
\end{align*}
This proves \eqref{eq:excess}.

Now, we show \eqref{eq:olex}. Let $u\in\mathbb S^1$. From Lemma \ref{lem:projconn}, we infer that
$P_u^\perp(\Omega)$ is an interval in $u^\perp$ and
$P_u^\perp(K)=\overline{P_u^\perp(\Omega)}.$
Let $Z_u$ be the same as in \eqref{eq:yibian}.
We now prove \eqref{eq:olex} for any $z\in u^\perp\setminus Z_u$.
If $z\notin P_u^\perp(K)$, then
$\ell(u, z)\cap K=\emptyset$. This implies that $S_\Omega(u,z)=0,$ $N_\Omega(u,z)=N_K(u,z)=0,$ and $|K_{u,z}|=0.$
Thus, \eqref{eq:olex} holds
in this case. Assume that $z\in \operatorname{int}_{u^\perp}P_u^\perp(K).$
Then $\ell(u, z)\cap K\ne\emptyset$ and hence $N_K(u,z)=1.$
By \eqref{eq:projs} and the fact that $P_u^\perp(\Omega)$ is an
interval, we have $z\in P_u^\perp(\Omega).$
By this, we find that $\ell(u, z)\cap\Omega\ne\emptyset$ and
$N_\Omega(u,z)\ge 1$. Since $\Omega\subset K$,
it follows that, for any $I\in\mathcal I(\Omega;u,z),$
\begin{align}\label{eq:baohanK}
|I|\le |K_{u,z}|\le \operatorname{diam}(K).
\end{align}
Choose one component $I_0\in\mathcal I(\Omega;u,z)$.
From \eqref{eq:baohanK}, we deduce that
\begin{align*}
S_\Omega(u,z)=|I_0|^q+\sum_{I\in\mathcal I(\Omega;u,z)\setminus\{I_0\}} |I|^q
\leq|K_{u,z}|^q+\left(N_\Omega(u,z)-1\right)[\operatorname{diam}(K)]^q.
\end{align*}
This shows \eqref{eq:olex}.

Finally, as in the proof of \eqref{eq:olex}, we find that, for any $u\in\mathbb{S}^1$ and  almost every
$z\in u^\perp$,
\begin{align*}
N_\Omega(u,z)\ge N_K(u,z).
\end{align*}
This, together  with Lemma \ref{lem:count}, implies that \eqref{eq:hullper} holds,
which  completes the proof of Proposition \ref{prop:excess}.
\end{proof}

\section{The Sharp Convex Chord Inequality}\label{sec:convchord}
In this section, we  prove the following sharp convex chord inequality.
\begin{theorem}\label{thm:conv}
Let   $K\subset\mathbb{R}^2$ be a planar convex body and $q\in(0,1)$.  Then
\begin{align}\label{eq:conv}
C_q(K)\le C_q(B_1)\left[\frac{\operatorname{Per}(K)}{2\pi}\right]^{q+1}.
\end{align}
Moreover, if $K:=\overline{B}_1$,
then the equality in \eqref{eq:conv} holds.
\end{theorem}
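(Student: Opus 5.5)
By the scaling identity \eqref{eq:shensuo} and $\operatorname{Per}(\lambda K)=\lambda\operatorname{Per}(K)$, both sides of \eqref{eq:conv} are homogeneous of degree $q+1$. So I may normalize $\operatorname{Per}(K)=2\pi$ and prove $C_q(K)\le C_q(B_1)$; equality for $K=\overline B_1$ is immediate. The strategy is a monotonicity argument along outer parallel bodies. Set $K_r:=K+rB_1$ for $r\ge0$. Since $\operatorname{Per}(K_r)=2\pi(1+r)=\operatorname{Per}(\overline B_{1+r})$, consider
\begin{align*}
F(r):=C_q(K_r)-C_q(B_{1+r}), \qquad r\ge 0 .
\end{align*}
I aim to show two things: (a) $F$ is nondecreasing on $[0,\infty)$, and (b) $\limsup_{r\to\infty}F(r)\le 0$. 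Together these give $F(0)\le0$, which is the claim.

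For (a) I need a first variation formula. Since $K_r$ is convex, Proposition \ref{prop:slicing11} lets me write $C_q(K_r)=\int_{\mathbb S^1}\int_{u^\perp}L_r(u,z)^q\,dz\,d\sigma(u)$, where $L_r(u,z)$ is the chord length. When $r$ increases, each endpoint of a chord moves outward at speed $1/\cos\theta$, where $\theta$ is the angle between $u$ and the outer normal at that endpoint. Hence
\begin{align*}
\frac{d}{dr}C_q(K_r)=q\int_{\mathbb S^1}\int_{u^\perp}L_r^{q-1}\bigl(\sec\theta_1+\sec\theta_2\bigr)\,dz\,d\sigma(u),
\end{align*}
possibly plus a term from newly created chords near the support lines. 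I expect that term to be negligible because $L_r^q\to0$ there.

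Next I change variables from $(u,z)$ to pairs of boundary points $(x,y)\in\partial K_r\times\partial K_r$, with Jacobian $\cos\theta_1\cos\theta_2/|x-y|$. The secants cancel one cosine, and the derivative becomes a double boundary integral of $|x-y|^{q-2}$ times a single cosine factor. I expect this to be (a constant multiple of) the fractional Willmore-type quantity $W_s(K_r)$ with $s=1-q$. Proposition \ref{prop:FW} then gives
\begin{align*}
\frac{d}{dr}C_q(K_r)\ge c\,W_s(B_1)(1+r)^{q}=\frac{d}{dr}C_q(B_{1+r}),
\end{align*}
where the last equality follows from running the same computation on the disk. So $F'\ge0$. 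This requires justifying differentiability, at least a.e. or as a one-sided derivative, for a general convex body. I would handle this by working first with smooth strictly convex $K$ and then approximating, using continuity of $C_q$ and of $\operatorname{Per}$ under Hausdorff convergence.

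For (b) I use the support function. The support function of $K_r$ is $h_K+r$, and $\frac{1}{2\pi}\int h_K=1$ by Cauchy's formula. After translating $K$, which does not change $C_q$, I may assume $f:=h_K-1$ has vanishing zeroth and first Fourier modes. Then $K_r$ is a perturbation of $\overline B_{1+r}$ by a support-function perturbation $f$ of size $O(1)$, and the relative size of this perturbation is $O(1/r)$. I would expand $C_q$ of this near-circular body to second order in $f/(1+r)$. The first-order term is a rotation-invariant linear functional of $f$, so it only sees the zeroth mode, which vanishes. The error is then $O\bigl((1+r)^{q+1}\cdot(1+r)^{-2}\bigr)=O(r^{q-1})\to0$, since $q<1$.

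I expect the main obstacle to be this asymptotic step, which needs a uniform second-order Taylor bound for the chord functional. Because $L^{q-1}$ is singular near tangent lines, the expansion must be carried out carefully in chord-length and angle coordinates rather than naively. The identification of the derivative with $W_s$ and the exact constant matching for the disk is the second delicate point.
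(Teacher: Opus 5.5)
Your proposal is correct and follows essentially the same route as the paper. The common steps are: normalize $\operatorname{Per}(K)=2\pi$; show that $r\mapsto C_q(K+rB_1)-C_q(B_{1+r})$ is nondecreasing using $\frac{d}{dr}C_q(K_r)=2qW_{1-q}(K_r)$ together with Proposition \ref{prop:FW}; show that this difference tends to $0$ by linearizing in the support-function perturbation $\varphi/(1+r)$, where rotation invariance leaves only $\int_0^{2\pi}\varphi=0$ (so your translation removing the first Fourier mode is harmless but unnecessary); and pass to general $K$ by $C^\infty_+$ approximation. The one real difference is how you get the first variation: you compute it directly from the endpoint speeds $\sec\theta_i$ and the Jacobian $\cos\theta_1\cos\theta_2/|x-y|$, which indeed gives $q\iint|x-y|^{q-2}(\cos\theta_1+\cos\theta_2)=2qW_{1-q}$. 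The paper instead goes through $C_q=q(1-q)P_{1-q}$, the fractional-perimeter variation formula of Lemma \ref{lem:first}, and Lemma \ref{lem:Hbdi}. Your version is more self-contained, but you then have to justify differentiating under the integral yourself, which the paper avoids by citing that formula.
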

Assume for the moment that Theorem \ref{thm:conv} holds; using it
and
Proposition \ref{prop:excess}, we show the following conclusion, which is essential for
the proof of Theorem \ref{thm:main}.
\begin{theorem}\label{thm:conn}
Let $\Omega\subset\mathbb{R}^2$ be a bounded  domain with $C^1$ boundary and   $q\in(0,1)$. Then
\begin{align}\label{eq:omega}
C_q(\Omega)\le C_q(B_1)\left[\frac{\operatorname{Per}(\Omega)}{2\pi}\right]^{q+1}.
\end{align}
Moreover, if $\Omega:=B_1$, then the equality in \eqref{eq:omega} holds.
\end{theorem}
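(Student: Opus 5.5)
The plan is to combine Theorem \ref{thm:conv}, applied to the convex hull, with the excess estimate of Proposition \ref{prop:excess}. We then show that the resulting error term is absorbed by the convexity of the map $t\mapsto t^{q+1}$. The lower bound \eqref{eq:dlb} of Lemma \ref{lem:disk} is exactly what makes this absorption work.

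Let $K:=\overline{\operatorname{conv}}(\Omega)$, $P:=\operatorname{Per}(\Omega)$ and $P_K:=\operatorname{Per}(K)$. By \eqref{eq:hullper} we have $P_K\le P$. By Proposition \ref{prop:excess}(i), $\operatorname{diam}(K)\le P_K/2$. Combining \eqref{eq:excess} with Theorem \ref{thm:conv} applied to $K$ gives
\begin{align*}
C_q(\Omega)\le C_q(B_1)\left[\frac{P_K}{2\pi}\right]^{q+1}+2^{1-q}P_K^{q}\,(P-P_K).
\end{align*}

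Set $f(t):=C_q(B_1)(2\pi)^{-q-1}t^{q+1}$. Since $f$ is convex on $(0,\infty)$,
\begin{align*}
f(P)\ge f(P_K)+f'(P_K)(P-P_K)=f(P_K)+(q+1)C_q(B_1)(2\pi)^{-q-1}P_K^{q}(P-P_K).
\end{align*}
Because $P-P_K\ge0$, it therefore suffices to check that
\begin{align*}
2^{1-q}\le (q+1)C_q(B_1)(2\pi)^{-q-1},
\qquad\text{that is,}\qquad
C_q(B_1)\ge \frac{4\pi^{q+1}}{q+1}.
\end{align*}
This holds, with strict inequality, by \eqref{eq:dlb}. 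Hence $C_q(\Omega)\le f(P)$, which is \eqref{eq:omega}.

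For the equality case $\Omega=B_1$, the ball is convex, so $K=\overline B_1$, $C_q(B_1)=C_q(\overline B_1)$ and $\operatorname{Per}(B_1)=2\pi$. Both sides of \eqref{eq:omega} then equal $C_q(B_1)$; alternatively, this follows from the equality statement in Theorem \ref{thm:conv}.

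The only delicate point is matching the constants: the linear excess term must be dominated by the first-order growth of the sharp bound. The factor $\operatorname{diam}(K)\le P_K/2$ together with \eqref{eq:dlb} yields exactly the required numerical inequality. All the genuine difficulty is deferred to Theorem \ref{thm:conv}.
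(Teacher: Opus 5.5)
Your proposal is correct and follows essentially the same route as the paper: apply \eqref{eq:excess} and Theorem \ref{thm:conv} to $K=\overline{\operatorname{conv}}(\Omega)$, bound $\operatorname{diam}(K)\le \operatorname{Per}(K)/2$, and absorb the excess term using \eqref{eq:dlb}. The only cosmetic difference is that the paper first normalizes $\operatorname{Per}(\Omega)=2\pi$ and writes your tangent-line (convexity) inequality as $1-t^{q+1}=(q+1)\int_t^1 r^q\,dr\ge (q+1)t^q(1-t)$ with $t=\operatorname{Per}(K)/(2\pi)$.
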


\begin{proof}
By \eqref{eq:shensuo}, without loss of generality, we may assume that $\operatorname{Per}(\Omega)=2\pi.$
Then, to obtain the desired conclusion, we just need to prove
\begin{align}\label{zuihou}
C_q(\Omega)\leq C_q(B_1).
\end{align}
Let $K:=\overline{\operatorname{conv}}(\Omega)$.
Using Proposition \ref{prop:excess} and Theorem \ref{thm:conv},
we find that
\begin{align}\label{eq:CqO}
C_q(\Omega)
&\leq C_q(K)+2[\operatorname{diam}(K)]^q\left[\operatorname{Per}(\Omega)-\operatorname{Per}(K)\right]  \notag\\
&\le C_q(B_1)\left[\frac{\operatorname{Per}(K)}{2\pi}\right]^{q+1}
+2\left[\frac{\operatorname{Per}(K)}{2}\right]^q[2\pi-\operatorname{Per}(K)]\notag\\
&=C_q(B_1)t^{q+1}+4\pi^{q+1}t^q(1-t),
\end{align}
where $t:=\frac{\operatorname{Per}(K)}{2\pi}.$
By Proposition \ref{prop:excess}, we have $t\in (0,1]$.
If $t=1$, then, by \eqref{eq:CqO},  we conclude that \eqref{zuihou} holds. If $t\in (0,1),$ we have
\begin{align*}
1-t^{q+1}=(q+1)\int_t^1 r^q\,dr>(q+1)t^q(1-t).
\end{align*}
From this, \eqref{eq:CqO}, and Lemma \ref{lem:disk}, it follows that
\begin{align*}
C_q(\Omega)\leq C_q(B_1)t^{q+1}+4\pi^{q+1}t^q(1-t)
<C_q(B_1)[t^{q+1}+(q+1)t^q(1-t)]<C_q(B_1),
\end{align*}
and hence \eqref{zuihou} in this case holds.
This finishes the proof of Theorem \ref{thm:conn}.
\end{proof}

Now, using Theorem \ref{thm:conn}
and Proposition \ref{prop:slicing11},
we prove Theorem \ref{thm:main}.
\begin{proof}[Proof of Theorem \ref{thm:main}]
By Proposition \ref{prop:slicing11} and Theorem \ref{thm:conn}, we have
\begin{align}\label{eq:suncuo}
P_s(\Omega)\leq\frac{1}{s(1-s)}C_{1-s}(\Omega)
\leq\frac{1}{s(1-s)}C_{1-s}(B_1)
\left[\frac{\operatorname{Per}(\Omega)}{2\pi}\right]^{2-s}.
\end{align}
Moreover, from Proposition \ref{prop:slicing11} and Lemma \ref{lem:disk}, we infer that
\begin{align*}
P_s(B_1)=\frac{1}{s(1-s)}C_{1-s}(B_1)=\frac{
2^{2-s}\pi^{\frac{3}{2}}\Gamma\left(\frac{3-s}{2}\right)}
{s(1-s)\Gamma\left(\frac{4-s}{2}\right)}.
\end{align*}
This, together with \eqref{eq:suncuo} and Lemma \ref{lem2.1},  implies that
\eqref{eq:main} holds, which completes the proof of Theorem \ref{thm:main}.
\end{proof}
The remainder of this section is organized as follows.
Subsections  \ref{sec:Willmore},
\ref{sec:comparison},
and \ref{sec:limit}
are devoted to establishing
several estimates needed for the
proof of Theorem \ref{thm:conv},
whose proof is finally given in Subsection \ref{sec:proof}.

\subsection{A Fractional Willmore-Type Inequality for Convex Curves}\label{sec:Willmore}
We first  recall some notation.
For any given $L\in (0,\infty)$
and any $s\in\mathbb{R},$ let
\begin{align*}
\mathbb{T}_L:=\mathbb{R}/(L\mathbb Z)\ \ \text{and}\ \
[s]_L:=s+L\mathbb Z\in\mathbb{T}_L.
\end{align*}
Moreover, let  $\mathbb{T}:=\mathbb{T}_1=\mathbb{R}/\mathbb Z$.
For any given function $f:\mathbb{T}_L\to \mathbb{R}^2,$ define its
\emph{$L$-periodic representative} by setting, for any $s\in\mathbb{R},$
\begin{align}\label{eq:3.5x}
f_{\mathbb{R}}(s):=f([s]_L).
\end{align}
Then continuity,  differentiability, and derivatives of $f$ are always understood through $f_{\mathbb{R}}$.
For simplicity, we shall not distinguish between $f$ and
its $L$-periodic representative $f_{\mathbb{R}}$ whenever no confusion can arise.

We say that a simple closed plane curve is \emph{positively oriented} if,
when one goes along the curve in the direction of increasing parameter, the interior of
the curve remains to the left. Let $\gamma$ be a plane curve parametrized by arc length.
The \emph{signed curvature} of $\gamma$ is defined by setting $\kappa_\gamma:=\det (\gamma',\gamma'').$
Recall that a compact convex set $K$ is said to be \emph{strictly convex} if, for any two distinct points $x,y\in \partial K$,
the open segment $(x,y):=\{(1-t)x+ty:0<t<1\}$ is contained in $\operatorname{int}K$.
Let  $k\in\{2,3,\ldots\}\cup\{\infty\}.$ A planar convex body $K$ is called a $C^k_+$ planar convex
body if $\partial K$ admits a positively  oriented $C^k$ arclength parametrization whose signed curvature
is positive. If $K$ is a $C^k_+$ planar convex body, then its boundary $\partial K$ is called a \emph{$C^k_+$ convex curve}.
Note that a $C^k_+$ planar convex body is automatically strictly convex.

Let $K\subset\mathbb{R}^2$ be a $C^2_+$ planar convex body and $\alpha\in (0,1)$.
Define the \emph{fractional Willmore-type quantity} by setting
\begin{align}\label{e3.1}
W_\alpha(K):=\int_{\partial K}\int_{\partial K}\frac{(y-x)\cdot\nu_{K}(y)}{|x-y|^{2+\alpha}}
\,d\mathcal{H}^1(y)\,d\mathcal{H}^1(x),
\end{align}
where $\nu_{K}$ denotes the \emph{outward
unit normal} along $\partial K.$
It is easy to verify that
\begin{align}\label{eq:bukeneng}
W_\alpha(B_1)=2^{1-\alpha}\pi^2\int_0^1 \sin(\pi w)^{-\alpha}\,dw
\end{align}
and, for any $\lambda\in (0,\infty),$
\begin{align}\label{eq:kshensuo}
W_\alpha(\lambda K)=\lambda^{1-\alpha}W_\alpha(K).
\end{align}
In this subsection, we show the following proposition.
\begin{proposition}\label{prop:FW}
Let $K\subset\mathbb{R}^2$ be a $C^2_+$ planar convex body and $\alpha\in (0,1)$.  Then
\begin{align}\label{eq:FW}
W_\alpha(K)\geq W_\alpha(B_1)\left[\frac{\operatorname{Per}(K)}{2\pi}\right]^{1-\alpha}.
\end{align}
Moreover, if $K:=\overline{B}_1$, then the equality in \eqref{eq:FW} holds.
\end{proposition}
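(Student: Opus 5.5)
The plan is to rewrite $W_\alpha(K)$ as an integral over pairs of boundary points, organized by their arclength offset, so that the integrand becomes an angular velocity of the chord direction weighted by $(\text{chord length})^{-\alpha}$. Jensen's inequality twice then reduces everything to the disk. Let $L:=\operatorname{Per}(K)$ and let $\gamma:\mathbb T_L\to\mathbb R^2$ be the positively oriented $C^2$ arclength parametrization of $\partial K$. Then $\nu_K(\gamma(t))=-J\gamma'(t)$, where $J$ is rotation by $\pi/2$.

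\textbf{Step 1: angular form of the integrand.} For $s\neq t$ in $\mathbb T_L$, write $\gamma(t)-\gamma(s)=d(s,t)\,e^{i\varphi(s,t)}$ with a local continuous angle $\varphi$. A direct computation gives
\begin{align*}
\frac{(\gamma(t)-\gamma(s))\cdot\nu_K(\gamma(t))}{|\gamma(t)-\gamma(s)|^{2+\alpha}}=\partial_t\varphi(s,t)\,d(s,t)^{-\alpha},
\end{align*}
and strict convexity ($C^2_+$) gives $\partial_t\varphi>0$. Exchanging the roles of $x$ and $y$ replaces $\varphi$ by $\varphi+\pi$ and $\partial_t$ by $\partial_s$, so that
\begin{align*}
W_\alpha(K)=\frac12\int_{\mathbb T_L}\int_{\mathbb T_L}(\partial_s+\partial_t)\varphi\; d^{-\alpha}\,ds\,dt .
\end{align*}
Next substitute $t=s+wL$ with $w\in(0,1)$. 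Set $c_w(s):=\gamma(s+wL)-\gamma(s)$, $d_w:=|c_w|$, and let $\psi_w(s):=\varphi(s,s+wL)$. Then $(\partial_s+\partial_t)\varphi=\psi_w'(s)>0$. The map $s\mapsto c_w(s)/d_w(s)$ from $\mathbb T_L$ to $\mathbb S^1$ has degree $1$; this is where the degree theory enters, via a homotopy in $w$ or by comparison with the tangent map as $w\to0^+$. Hence $\int_0^L\psi_w'\,ds=2\pi$, and
\begin{align*}
W_\alpha(K)=\frac L2\int_0^1\int_0^L\psi_w'(s)\,d_w(s)^{-\alpha}\,ds\,dw .
\end{align*}

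\textbf{Step 2: first Jensen, with respect to $\psi_w'$.} Since $\psi_w'\,ds/(2\pi)$ is a probability measure and $r\mapsto r^{-\alpha}$ is convex, Jensen's inequality gives
\begin{align*}
\int_0^L\psi_w' d_w^{-\alpha}\,ds\ge 2\pi\left(\frac1{2\pi}\int_0^L\psi_w' d_w\,ds\right)^{-\alpha}.
\end{align*}
Moreover $\psi_w' d_w=\det(c_w/d_w,\,c_w')\le|c_w'|=|\gamma'(s+wL)-\gamma'(s)|$.

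\textbf{Step 3: second Jensen, in the tangent angle.} Let $\theta$ be the tangent angle, which is increasing with total increase $2\pi$. Put $\Delta_w(s):=\theta(s+wL)-\theta(s)\in[0,2\pi]$. Then $|\gamma'(s+wL)-\gamma'(s)|=2\sin(\Delta_w(s)/2)$, and $\frac1L\int_0^L\Delta_w\,ds=2\pi w$. Since $x\mapsto2\sin(x/2)$ is concave on $[0,2\pi]$, Jensen's inequality yields
\begin{align*}
\int_0^L\psi_w'd_w\,ds\le 2L\sin(\pi w).
\end{align*}

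\textbf{Step 4: conclusion.} Combining Steps 1--3,
\begin{align*}
W_\alpha(K)\ge\frac L2\cdot2\pi\left(\frac{L}{\pi}\right)^{-\alpha}\int_0^1\sin(\pi w)^{-\alpha}dw=\pi^{1+\alpha}L^{1-\alpha}\int_0^1\sin(\pi w)^{-\alpha}dw .
\end{align*}
By \eqref{eq:bukeneng}, the right-hand side equals $W_\alpha(B_1)(L/2\pi)^{1-\alpha}$. For $K=\overline B_1$, $\psi_w'$ and $\Delta_w$ are constant, so both Jensen steps are equalities; equivalently, equality follows from \eqref{eq:bukeneng} directly.

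\textbf{Main obstacle.} I expect the main difficulty to be the rigorous bookkeeping in Step 1. One must justify Fubini and the exchange of $x$ and $y$ despite the singularity at $w\to0,1$. Here I would use that $\partial_t\varphi\,d^{-\alpha}\sim\kappa\,|s-t|^{-\alpha}/2$ is integrable by the $C^2$ bound, and then truncate to $w\in[\varepsilon,1-\varepsilon]$. One must also justify the degree-one claim for the chord map at every $w\in(0,1)$. For this I would use the homotopy $w\mapsto c_w/d_w$, which is continuous on $(0,1)$ because $d_w>0$, and which tends to the unit tangent map as $w\to0^+$; the unit tangent map has degree $1$ by Hopf's Umlaufsatz for convex curves.
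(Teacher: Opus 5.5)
Your proposal is correct and is essentially the paper's proof. Your $\psi_w'$ is the paper's $|\partial_u e|=A/R$, and your Step 1 symmetrization is the averaging of \eqref{eq:Wend} and \eqref{eq:x12} into \eqref{eq:xinhuang}; your Jensen step for $r^{-\alpha}$ against $\psi_w'\,ds/(2\pi)$ is exactly the H\"older inequality \eqref{eq:qiangua} combined with the degree bound of Lemma \ref{lem:secant}, and your Step 3 is Lemma \ref{lem:angC}(ii), with the only cosmetic differences being that you use $\psi_w'>0$ to get $\int_0^L\psi_w'\,ds=2\pi$ exactly (the paper only needs $\ge 2\pi$) and that you keep $L$ general rather than normalizing $L=2\pi$.
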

To prove Proposition \ref{prop:FW}, we need several lemmas.
The following elementary lemma is a direct consequence of the path lifting property
for the covering map $\theta\mapsto(\cos\theta,\sin\theta)$ from $\mathbb R$ onto $\mathbb S^1$
(see, for instance, \cite[Section 1.3]{h02}).
\begin{lemma}\label{lem:arg}
Let $I\subset\mathbb{R}$ be an interval, and let $f:I\to\mathbb{S}^1$ be continuous.
Let $s_0\in I$ and $\theta_0\in\mathbb{R}$ satisfy
$f(s_0)=(\cos\theta_0,\sin\theta_0).$
Then there exists a unique continuous function $\theta:I\to\mathbb{R}$ such that $\theta(s_0)
=\theta_0$ and, for any $s\in I,$
\begin{align*}
f(s)=(\cos\theta(s),\sin\theta(s)).
\end{align*}
Moreover, if $I$ is open and $f\in C^1(I;\mathbb{R}^2)$, then  $\theta$ is $C^1$.
\end{lemma}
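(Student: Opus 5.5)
Lemma \ref{lem:arg} is the standard lifting statement, and the plan is to derive it from the covering-space property of $p(\theta):=(\cos\theta,\sin\theta)$. First I would prove uniqueness. If $\theta_1,\theta_2$ are two continuous lifts with $\theta_1(s_0)=\theta_2(s_0)$, then $p(\theta_1(s))=p(\theta_2(s))$ for every $s$. Hence $(\theta_1-\theta_2)/(2\pi)$ is a continuous function on the connected set $I$ with values in $\mathbb Z$, so it is constant. Since it vanishes at $s_0$, it vanishes identically.

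For existence I would use a Lebesgue-number argument rather than quoting \cite{h02}. Cover $\mathbb S^1$ by the two open arcs $U_\pm:=\mathbb S^1\setminus\{\mp(1,0)\}$. Each has preimage under $p$ equal to a disjoint union of open intervals of length $2\pi$, and $p$ restricted to each such interval is a homeomorphism onto $U_\pm$, with continuous inverse given by a branch of the argument.

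Fix any compact subinterval $[a,b]\subset I$ containing $s_0$. By uniform continuity of $f$ on $[a,b]$, there is a partition $a=s_{-m}<\dots<s_0<\dots<s_n=b$ such that $f$ maps each piece $[s_{j},s_{j+1}]$ into one of $U_\pm$. Starting from $\theta(s_0)=\theta_0$, I would define $\theta$ piece by piece moving right from $s_0$. On each piece, compose $f$ with the branch of $p^{-1}$ whose value at the left endpoint matches the already-defined $\theta$ there; the analogous construction handles pieces to the left of $s_0$. These pieces glue to a continuous lift on $[a,b]$. By uniqueness, the lifts on an exhausting increasing sequence of compact subintervals agree on overlaps, so they define a lift on all of $I$.

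For the $C^1$ statement with $I$ open and $f\in C^1$, regularity is local. Near any $s\in I$, $\theta$ coincides with a local inverse of $p$ composed with $f$. Since $p$ is a local $C^\infty$ diffeomorphism with $|p'|=1$, the inverse-function theorem shows this branch is $C^\infty$ on $U_\pm$, so $\theta$ is $C^1$. One can also write this explicitly: $\theta'(s)=\det(f(s),f'(s))=f_1f_2'-f_2f_1'$, which is continuous.

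The only step requiring care is the existence gluing, namely choosing the correct sheet on each subinterval so that consecutive pieces agree at the junction points. Once uniqueness is in hand, the extension from compact subintervals to a general (possibly open or unbounded) interval is routine.
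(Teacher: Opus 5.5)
Your proposal is correct and takes essentially the paper's route: the paper gives no argument of its own and simply invokes the path lifting property of the covering $\theta\mapsto(\cos\theta,\sin\theta)$, and your partition/Lebesgue-number construction is the standard proof of exactly that property. Your extra steps correctly supply what the paper leaves implicit, namely the passage from compact subintervals to an arbitrary interval (via uniqueness) and the $C^1$ regularity (via local inverses of $p$, with $\theta'=\det(f,f')$).
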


Let $L\in (0,\infty)$ and
$f:\mathbb{T}_L\to\mathbb{S}^1$ be a continuous map.
The \emph{angle function $\theta$} of $f$ is a continuous real-valued function mapping $\mathbb{R}$ to $\mathbb{R}$ such that, for any $s\in\mathbb{R},$
\begin{align*}
f_{\mathbb{R}}(s):=(\cos\theta(s),\sin\theta(s)),
\end{align*}
where $f_{\mathbb{R}}$ is as in \eqref{eq:3.5x}.
By Lemma \ref{lem:arg}, we find that such an angle function $\theta$ exists.
It is well known that there  exists
a unique integer $\deg(f)$, independent of $\theta$, such that, for any $s\in\mathbb{R},$
$$\theta(s+L)=\theta(s)+2\pi\deg(f).$$ This integer $\deg(f)$
 is called the \emph{degree} of $f$.
The following lemma shows that  degree is invariant under homotopy. The following lemma is
a standard homotopy invariance property of the degree of maps from
$\mathbb T_L$ into $\mathbb S^1$; it follows  from the homotopy lifting property
for covering spaces applied to the covering map $\theta\mapsto(\cos\theta,\sin\theta)$ from
$\mathbb R$ onto $\mathbb S^1$ (see, for instance, \cite[Proposition 1.30]{h02}).
\begin{lemma}\label{lem:deg}
Let $L\in  (0,\infty)$,
$J\subset\mathbb{R}$ be a compact interval, and  $H:J\times\mathbb{T}_L\to\mathbb{S}^1$ be continuous.
For any given $\lambda\in J$ and $z\in \mathbb{T}_L,$ let
$H_\lambda(z):=H(\lambda,z)$.  Then $\deg(H_\lambda)$ is independent of $\lambda\in J$.
\end{lemma}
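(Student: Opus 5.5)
The plan is to show that the integer-valued function $\lambda\mapsto\deg(H_\lambda)$ is locally constant on $J$. Since $J$ is connected, it is then constant. The whole argument uses only Lemma \ref{lem:arg}, together with the fact (recalled just before the statement) that $\deg(f)$ does not depend on the choice of angle function.

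\emph{Step 1 (uniform closeness).} The set $J\times\mathbb{T}_L$ is compact, so $H$ is uniformly continuous on it. Hence there is $\delta>0$ such that, whenever $\lambda,\mu\in J$ satisfy $|\lambda-\mu|<\delta$, we have $|H(\lambda,z)-H(\mu,z)|<2$ for every $z\in\mathbb{T}_L$. In particular, $H_\lambda(z)$ and $H_\mu(z)$ are never antipodal.

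\emph{Step 2 (comparing two nearby maps).} Fix such $\lambda,\mu$ and write $f:=H_\lambda$, $g:=H_\mu$. Identify $\mathbb{R}^2$ with $\mathbb{C}$ and set $h:=g\,\overline{f}:\mathbb{T}_L\to\mathbb{S}^1$, which is continuous. By Step 1, $h$ never takes the value $-1$. Consequently $h_{\mathbb{R}}$ has a continuous angle function $\psi$ with values in $(-\pi,\pi)$: take $\psi:=\operatorname{Arg}\circ h_{\mathbb{R}}$, where $\operatorname{Arg}$ is continuous on $\mathbb{S}^1\setminus\{-1\}$. This $\psi$ is $L$-periodic, so $\deg(h)=0$.

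Now let $\theta$ be an angle function of $f$, obtained from Lemma \ref{lem:arg}. Then $\theta+\psi$ is continuous and satisfies
\begin{align*}
g_{\mathbb{R}}(s)=h_{\mathbb{R}}(s)f_{\mathbb{R}}(s)=(\cos(\theta+\psi)(s),\sin(\theta+\psi)(s)),
\end{align*}
so $\theta+\psi$ is an angle function of $g$. Since the degree is independent of the chosen angle function,
\begin{align*}
2\pi\deg(g)=(\theta+\psi)(s+L)-(\theta+\psi)(s)=2\pi\deg(f)+0.
\end{align*}
Therefore $\deg(H_\mu)=\deg(H_\lambda)$ whenever $|\lambda-\mu|<\delta$.

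\emph{Step 3 (conclusion).} Cover $J=[a,b]$ by finitely many consecutive points $a=\lambda_0<\lambda_1<\cdots<\lambda_m=b$ with mesh less than $\delta$. Applying Step 2 along this chain gives $\deg(H_\lambda)=\deg(H_a)$ for every $\lambda\in J$.

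The only genuinely delicate point is Step 2: producing a global, periodic continuous angle for $h$. This is exactly where the no-antipodal condition from Step 1 is used, because it lets us take the principal branch $\operatorname{Arg}$ and avoid any lifting argument.

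An alternative route would lift $H$ on all of $J\times\mathbb{R}$ using the homotopy lifting property. One would then observe that $\Theta(\lambda,s+L)-\Theta(\lambda,s)$ is continuous in $\lambda$ and lies in $2\pi\mathbb{Z}$, hence is constant. The elementary route above avoids needing that lifting property separately.
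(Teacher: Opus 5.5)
Your proof is correct, and it takes a different route from the paper. The paper gives no argument of its own. It appeals to the homotopy lifting property for the covering $\theta\mapsto(\cos\theta,\sin\theta)$ (citing Hatcher, Proposition~1.30). That yields a single continuous lift $\Theta:J\times\mathbb{R}\to\mathbb{R}$ of $(\lambda,s)\mapsto H(\lambda,[s]_L)$. Then $\Theta(\lambda,s+L)-\Theta(\lambda,s)$ is a continuous $2\pi\mathbb{Z}$-valued function on the connected set $J\times\mathbb{R}$, hence constant. This is exactly your ``alternative route''.

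You instead lift only one map at a time, via Lemma~\ref{lem:arg}. You compare nearby maps through the principal branch of the argument of $H_\mu\overline{H_\lambda}$, which is legitimate because uniform continuity on the compact set $J\times\mathbb{T}_L$ excludes antipodal values. The compactness and connectedness of $J$ then enter only through the finite chain.

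Your route is self-contained: it uses nothing beyond Lemma~\ref{lem:arg} and the well-definedness of $\deg$, both already stated in the paper. It also isolates a reusable fact: two maps $\mathbb{T}_L\to\mathbb{S}^1$ that are nowhere antipodal have the same degree. The paper's route is a one-line citation and extends verbatim to arbitrary coverings and parameter spaces. However, the proof of the homotopy lifting property hides the same compactness (Lebesgue-number) argument that you carry out explicitly.
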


We have the  following lemma.
\begin{lemma}\label{lem:angC}
Let $K$ be a $C^2_+$ planar convex body,  $\Gamma:=\partial K$, and $L:=\mathcal H^1(\Gamma)$.
Let $\gamma:\mathbb T_L\to\Gamma$ be a positively oriented arclength parametrization of
$\Gamma$. Then the following assertions hold.
\begin{itemize}
\item [$\mathrm{(i)}$] $\gamma'$ admits a $C^1$
angle function $\theta:\mathbb{R}\to\mathbb{R}$ such that, for any $s\in\mathbb{R},$
$\gamma'(s)=(\cos\theta(s),\sin\theta(s))$, $\theta'(s)=\kappa_\gamma(s)>0$, and $\theta(s+L)=\theta(s)+2\pi.$
In particular, $\deg(\gamma')=1$.

\item [$\mathrm{(ii)}$] For any $w\in(0,L)$,
\begin{align}\label{eq:x10}
\int_0^L[\theta(u+w)-\theta(u)]\,du=2\pi w
\end{align}
and
\begin{align}\label{eq:tdiff}
\int_0^L |\gamma'(u+w)-\gamma'(u)|\,du
\le 2L\sin\left(\frac{\pi w}{L}\right).
\end{align}
\end{itemize}
\end{lemma}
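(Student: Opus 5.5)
For (i), I will apply Lemma \ref{lem:arg} to the continuous map $\gamma':\mathbb R\to\mathbb S^1$; its $L$-periodic representative is $C^1$ because $\gamma$ is $C^2$. This produces a $C^1$ angle function $\theta$ with $\gamma'=(\cos\theta,\sin\theta)$. Differentiating gives $\gamma''=\theta'(-\sin\theta,\cos\theta)$, so
$$\kappa_\gamma=\det(\gamma',\gamma'')=\theta'(\cos^2\theta+\sin^2\theta)=\theta',$$
and this is positive by the $C^2_+$ assumption. By definition of degree, $\theta(s+L)=\theta(s)+2\pi d$ with $d=\deg(\gamma')$. Since $\theta'>0$, we get $d\ge1$; $d=0$ is impossible because $\theta$ would then be periodic and strictly increasing.

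To show $d=1$, I would use the following step, which I expect to be the main obstacle: a simple closed convex curve has rotation index one (Hopf's Umlaufsatz). I would try to get it cheaply from strict convexity via Lemma \ref{lem:deg}. Consider the secant map on the triangle $0\le s<t\le L$,
$$H(s,t)=\frac{\gamma(t)-\gamma(s)}{|\gamma(t)-\gamma(s)|},$$
extended by $\gamma'(s)$ on the diagonal and by $-\gamma'(s)$ at $(s,s+L)$. This is continuous because $\gamma$ is a simple $C^1$ curve. Following the two-edge path from $(0,0)$ through $(0,L)$ to $(L,L)$, a homotopy with the diagonal shows that the total angle change of $\gamma'$ equals the angle change of $H$ along this path. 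Fix the base point at a point of $\Gamma$ where the supporting line is tangent, so that $K$ lies on the left of $\gamma'(0)$ (positive orientation). Along each of the two edges, $H$ then stays in a closed half-plane and swings from $\gamma'(0)$ to $-\gamma'(0)$, and then back to $\gamma'(0)$. Each edge therefore contributes exactly $\pi$ with the correct sign, for a total of $2\pi$, so $d=1$. (Alternatively: $\theta$ is strictly increasing and the outward normal $\nu=\gamma'$ rotated by $-\pi/2$ must be injective mod $2\pi$ on $[0,L)$, since two boundary points with the same outer normal would have the same supporting line, contradicting strict convexity. Hence $\theta(L)-\theta(0)\le 2\pi$, giving $d=1$. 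I would likely use this second argument, as it is shorter.)

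For (ii), the identity \eqref{eq:x10} follows by writing $F(w)=\int_0^L[\theta(u+w)-\theta(u)]\,du$. Using the relation $\theta(u+L)=\theta(u)+2\pi$, we get
$$F'(w)=\int_0^L\theta'(u+w)\,du=\theta(L+w)-\theta(w)=2\pi,$$
and $F(0)=0$, so $F(w)=2\pi w$. For \eqref{eq:tdiff}, note that
$$|\gamma'(u+w)-\gamma'(u)|=2\left|\sin\frac{\theta(u+w)-\theta(u)}{2}\right|.$$
Since $\theta$ is increasing with total increase $2\pi$ over a period, $\phi(u):=\theta(u+w)-\theta(u)\in[0,2\pi]$ for $w\in(0,L)$. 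On this range $|\sin(\phi/2)|=\sin(\phi/2)$, and $\sin(\cdot/2)$ is concave on $[0,2\pi]$. Jensen's inequality with the normalized measure $du/L$, combined with \eqref{eq:x10}, then gives
$$\int_0^L 2\sin\frac{\phi(u)}{2}\,du\le 2L\sin\left(\frac{1}{2L}\int_0^L\phi\right)=2L\sin\frac{\pi w}{L},$$
which is \eqref{eq:tdiff}.
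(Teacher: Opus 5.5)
Your proposal is correct and essentially reproduces the paper's proof. Your second argument for $\deg(\gamma')=1$ is exactly the paper's contradiction argument via the intermediate value theorem: two distinct boundary points with the same outward normal would share a supporting line, contradicting strict convexity, so $\theta(L)-\theta(0)\le 2\pi$. For (ii), your differentiation in $w$ is equivalent to the paper's Fubini computation, and you then apply the same Jensen inequality for the concave function $x\mapsto 2\sin(x/2)$ on $[0,2\pi]$. The Hopf-type secant homotopy is a valid but unnecessary detour, and your explicit check that $\theta(u+w)-\theta(u)\in[0,2\pi]$ supplies a detail the paper uses only implicitly when applying Jensen.
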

\begin{proof}
We  first show (i). By Lemma \ref{lem:arg}, we conclude that such an angle function $\theta$ exists. Moreover,
it is easy to verify that, for any $s\in\mathbb{R},$
\begin{align*}
 \theta'(s)=\det(\gamma'(s),\gamma''(s))=\kappa_\gamma(s)>0.
\end{align*}
This further implies that
$\deg(\gamma')\geq 1.$ Now, we prove that $\deg(\gamma')=1.$
Assume that $\deg(\gamma')\ge2$.  Then $\theta(L)-\theta(0)\geq 2\pi \deg(\gamma')\geq 4\pi.$
From this and
the intermediate value
theorem, we deduce that there exists $s_0\in (0,L)$ such that
\begin{align}\label{shian}
\theta(s_0)=\theta(0)+2\pi.
\end{align}
Assume that $\gamma'=(\gamma'_1,\gamma'_2).$
Denote the outward unit normal
of $\gamma$ by $\nu_\gamma$. Then $\nu_\gamma=(\gamma'_2,-\gamma'_1).$
By \eqref{shian}, we obtain $\gamma'(s_0)=\gamma'(0)$ and $\nu_\gamma(s_0)=\nu_\gamma(0).$
Let $p:=\gamma(0)$ and $q:=\gamma(s_0)$. By the fact that $\gamma$ is simple, we have $p\not=q.$
Applying the support line property of the planar convex body, we conclude that,  for any $s\in\mathbb{R}$ and $x\in K,$
\begin{align}\label{eq:x01}
[x-\gamma(s)]\cdot \nu_\gamma(s)\leq0
\end{align}
(see, for instance, \cite[Theorem 1.32]{s14}). Using this, we obtain
\begin{align*}
(q-p)\cdot \nu_\gamma(0)\leq 0\ \ \text{and}\ \ (p-q)\cdot \nu_\gamma(s_0)\leq 0.
\end{align*}
Thus, $p \cdot \nu_\gamma(0)=q \cdot \nu_\gamma(0).$
This further implies that
$[p,q]$ is contained in the supporting line $\ell:=\{x\in\mathbb{R}^2:\ x\cdot\nu_\gamma(0)=q\cdot\nu_\gamma(0)\}.$
From this and \eqref{eq:x01}, we infer that $[p,q]\subset K\cap \ell\subset \partial K.$
However,  by the fact  that $K$
is strictly convex, we conclude that $[p,q]\subset \operatorname{int}K.$ This contradicts
$[p,q]\subset\partial K.$ Thus, $\deg(\gamma')=1.$  This proves (i).

To show (ii),
since $\theta$ is $C^1$, it follows that, for any $w\in(0,L)$,
\begin{align*}
\int_0^L[\theta(u+w)-\theta(u)]\,du
&=\int_0^L\int_0^w\theta'(u+v)\,dv\,du
=\int_0^w\int_0^L\theta'(u+v)\,du\,dv  \\
&=\int_0^w\left(\theta(v+L)-\theta(v)\right)\,dv
=\int_0^w2\pi\,dv
=2\pi w.
\end{align*}
This proves \eqref{eq:x10}.

Finally, observe that, for any $w\in (0,L)$ and $u\in\mathbb{R},$
\begin{align}\label{momo}
|\gamma'(u+w)-\gamma'(u)|^2
=2-2\cos(\theta(u+w)-\theta(u))
=4\sin^2\left(\frac{\theta(u+w)-\theta(u)}2\right).
\end{align}
For any $x\in [0,2\pi],$ let
$\varphi(x):=2\sin(\frac{x}{2}).$ Then
$\varphi$ is concave in $[0,2\pi]$.
Using this,  Jensen's inequality, \eqref{momo}, and \eqref{eq:x10}, we conclude that, for any $w\in (0,L),$
\begin{align*}
\frac{1}{L}\int_0^L |\gamma'(u+w)-\gamma'(u)|\,du
&=\frac{1}{L}\int_0^L \varphi(\theta(u+w)-\theta(u))\,du\\
&\leq \varphi\left(\frac{1}{L}\int_0^L\theta(u+w)-\theta(u)\,du\right)
=2\sin\left(\frac{\pi w}{L}\right).
\end{align*}
This shows \eqref{eq:tdiff},
which completes the proof of Lemma \ref{lem:angC}.
\end{proof}

Let $\gamma$ be the same as in Lemma \ref{lem:angC}.
For any $u\in\mathbb{R}$ and $w\in (0,L),$ let
\begin{align*}
r(u,w):=\gamma(u+w)-\gamma(u),
\ R(u,w):=|r(u,w)|,\ \ \text{and}\ \
e(u,w):=\frac{r(u,w)}{R(u,w)}.
\end{align*}
Recall that, for any given $e\in\mathbb{S}^1$, the orthogonal projection
$P_e^\perp$ is defined  by setting, for any
$v\in\mathbb{R}^2$, $P_e^\perp v:=v-(v\cdot e)e.$
For any $u\in\mathbb{R}$ and $w\in (0,L),$ let
\begin{align*}
A(u,w):=\left|P_{e(u,w)}^\perp\left(\gamma'(u+w)-\gamma'(u)\right)\right|.
\end{align*}
By Lemma \ref{lem:deg}, we have the following lemma.
\begin{lemma}\label{lem:secant}
Let all the notation be the same as in Lemma \ref{lem:angC}.
Then, for any $u\in\mathbb{R}$ and $w\in (0,L),$
\begin{align}\label{eq:duenorm}
|\partial_u e(u,w)|=\frac{A(u,w)}{R(u,w)}
\end{align}
and
\begin{align}\label{eq:secant}
\int_0^L |\partial_u e(u,w)|\,du\ge 2\pi,
\end{align}
where $\partial_u$ denotes the
partial derivative on the first
variable $u$.
\end{lemma}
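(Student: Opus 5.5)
The identity \eqref{eq:duenorm} will come from differentiating the unit vector $e=r/R$ directly. The lower bound \eqref{eq:secant} will come from a degree argument based on Lemma \ref{lem:deg}.

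\emph{The derivative formula.} Fix $w\in(0,L)$. Since $\gamma$ is simple, $r(u,w)\neq 0$, so $R>0$ and $e$ is $C^1$ in $u$. We have $\partial_u r(u,w)=\gamma'(u+w)-\gamma'(u)$. For any nonvanishing $C^1$ vector field $r$, the standard computation gives
\begin{align*}
\partial_u\frac{r}{|r|}=\frac{1}{|r|}\left[\partial_u r-\left(\partial_u r\cdot e\right)e\right]=\frac{1}{R}P_e^\perp\partial_u r.
\end{align*}
Taking norms yields $|\partial_u e|=A/R$, which is \eqref{eq:duenorm}.

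\emph{The lower bound.} For fixed $w$, the map $u\mapsto e(u,w)$ is $L$-periodic, so it defines a continuous map $e_w:\mathbb T_L\to\mathbb S^1$. By Lemma \ref{lem:arg} it has a $C^1$ angle function $\phi_w$ with $\phi_w(u+L)=\phi_w(u)+2\pi\deg(e_w)$. Because $|\partial_u e|=|\phi_w'|$, we get
\begin{align*}
\int_0^L|\partial_u e(u,w)|\,du\ge\left|\int_0^L\phi_w'(u)\,du\right|=2\pi|\deg(e_w)|.
\end{align*}
So it suffices to prove $\deg(e_w)=1$ for every $w\in(0,L)$.

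For this, define $H(\lambda,[u]_L):=e(u,\lambda)$ on $[\delta,w]\times\mathbb T_L$ for small $\delta>0$. This is continuous because $\gamma$ is simple, and Lemma \ref{lem:deg} then gives $\deg(e_w)=\deg(e_\delta)$. As $\delta\to0^+$ we have $e(u,\delta)\to\gamma'(u)$ uniformly in $u$, since $\gamma$ is $C^1$ and $|r|=\delta+o(\delta)$ uniformly. To make this a genuine homotopy, I will extend $H$ to $\lambda\in[0,w]$ by setting $H(0,u):=\gamma'(u)$ and check that the extension is continuous at $\lambda=0$ using uniform continuity of $\gamma'$. Lemma \ref{lem:deg} then gives $\deg(e_w)=\deg(\gamma')=1$ by Lemma \ref{lem:angC}(i), and \eqref{eq:secant} follows.

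\emph{Expected obstacle.} The main point to verify is continuity of the extended homotopy at $\lambda=0$. It follows from
\begin{align*}
\frac{r(u,\lambda)}{\lambda}=\int_0^1\gamma'(u+t\lambda)\,dt\to\gamma'(u)
\end{align*}
uniformly in $u$, together with $|\gamma'|=1$, so that normalizing does not spoil convergence. Everywhere else, simplicity of $\gamma$ on $(0,L)$ is what guarantees $R>0$.
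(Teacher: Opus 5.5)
Your proposal is correct and follows essentially the same route as the paper: the same computation $\partial_u e=R^{-1}P_e^\perp(\gamma'(u+w)-\gamma'(u))$, and the same homotopy $\lambda\mapsto e(\cdot,\lambda)$ on $[0,w]$ with $e(\cdot,0):=\gamma'$. As in the paper, you combine Lemma \ref{lem:deg} with Lemma \ref{lem:angC}(i) to get $\deg(e_w)=1$, and then use the angle-function bound $\int_0^L|\psi'|\ge|\psi(L)-\psi(0)|=2\pi$; your explicit check of joint continuity at $\lambda=0$ via $r(u,\lambda)/\lambda=\int_0^1\gamma'(u+t\lambda)\,dt$ fills in a detail the paper only asserts.
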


\begin{proof}
Let $u\in\mathbb{R}$ and $w\in (0,L).$
By the fact that $\gamma$ is simple, we find that $R(u,w)\in (0,\infty).$
Since $R(u,w)=|r(u,w)|$, it follows that
\begin{align*}
\partial_u R(u,w)=\frac{r(u,w)}{|r(u,w)|}\cdot \partial_u r(u,w)
=e(u,w)\cdot \partial_u r(u,w).
\end{align*}
Thus,
\begin{align*}
\partial_u e(u,w)&=\frac{\partial_u r(u,w)R(u,w)-r(u,w)\,\partial_u R(u,w)}{R(u,w)^2} \notag\\
&=\frac{\partial_u r(u,w)-e(u,w)\left(e(u,w)\cdot\partial_u r(u,w)\right)}
{R(u,w)}\notag\\
&=\frac{P_{e(u,w)}^\perp(\partial_u r(u,w))}{R(u,w)}
=\frac{P_{e(u,w)}^\perp(\gamma'(u+w)-\gamma'(u))}{R(u,w)}.
\end{align*}
This implies that  \eqref{eq:duenorm} holds.

It remains to prove \eqref{eq:secant}.
For any $\varepsilon\in (0,w]$ and $u\in\mathbb{R},$
let $e_{0}(u):=\gamma'(u)$ and $$e_{\varepsilon}(u):=e(u,\varepsilon)=\frac{\gamma(u+\varepsilon)
-\gamma(u)}{|\gamma(u+\varepsilon)-\gamma(u)|}.$$
By the fact that $\gamma$ is $C^2$,  it is easy to verify that $\frac{\gamma(u+\varepsilon)-
\gamma(u)}{\varepsilon}$ uniformly  converges to $\gamma'(u)$ as $\varepsilon\to 0^+.$ This further implies that
$e_{\varepsilon}(u)$ uniformly  converges to $e_0(u)=\gamma'(u)$ as $\varepsilon\to 0^+.$
Thus, $e_{\varepsilon}(u):\ [0,w]\times\mathbb{T}_L\to \mathbb{S}^1$ is a homotopy.
Using this and Lemmas \ref{lem:deg} and \ref{lem:angC}(i), we conclude that
\begin{align}\label{gouai}
\deg(e_w)=\deg(\gamma')=1.
\end{align}
By the fact that  $\gamma$ is $C^2$ and by Lemma \ref{lem:arg},
we find  that there exists
a $C^1$ function $\psi$ such that, for any $u\in\mathbb{R},$
\begin{align*}
e_w(u)=(\cos\psi(u),\sin\psi(u)).
\end{align*}
Thus, for any $u\in\mathbb{R},$
\begin{align}\label{eq:gouai2}
|\partial_u e(u,w)|=|\psi'(u)(-\sin\psi(u),\cos\psi(u))|=|\psi'(u)|.
\end{align}
On the other hand, from \eqref{gouai}, we deduce that, for any $u\in\mathbb{R},$
\begin{align*}
\psi(u+L)-\psi(u)=2\pi.
\end{align*}
This, combined with \eqref{eq:gouai2}, implies that
\begin{align*}
\int_0^L |\partial_u e(u,w)|\,du=\int_0^L |\psi'(u)|\,du
\ge\left|\int_0^L \psi'(u)\,du\right|=
|\psi(L)-\psi(0)|=2\pi.
\end{align*}
This finishes the proof of Lemma \ref{lem:secant}.
\end{proof}

Now, we show Proposition \ref{prop:FW} via Lemmas \ref{lem:secant} and \ref{lem:angC}.
\begin{proof}[Proof of  Proposition \ref{prop:FW}]
By \eqref{eq:kshensuo}, without loss of generality, we may assume that
$\operatorname{Per}(K)=2\pi.$
Let $\gamma:\mathbb{T}_{2\pi}\to\partial K$ be a positively oriented arclength
parametrization, and let $\nu$ be the outward unit normal along $\partial K$.
Observe that, for any $u, w\in (0,2\pi),$ $\nu(u+w)$ is perpendicular to $\gamma'(u+w)$.
This implies that, for any $u,w\in (0,2\pi),$
\begin{align*}
r(u,w)\cdot\nu(u+w)
&=\left|P_{\gamma'(u+w)}^\perp r(u,w)\right|
=R(u,w)\left|P_{\gamma'(u+w)}^\perp e(u,w)\right|\\
&=R(u,w)\left|P_{e(u,w)}^\perp\gamma'(u+w)\right|.
\end{align*}
Using this and the definition of $W_{\alpha}(K)$, we obtain
\begin{align}\label{eq:Wend}
W_\alpha(K)
&=\int_0^{2\pi}\int_0^{2\pi}\frac{(\gamma(u+w)-\gamma(u))
\cdot\nu(u+w)}{|\gamma(u+w)-\gamma(u)|^{2+\alpha}}\,dw\,du\notag\\
&=\int_0^{2\pi}\int_0^{2\pi}
|P_{e(u,w)}^\perp\gamma'(u+w)|
R(u,w)^{-1-\alpha}\,dw\,du.
\end{align}
Similarly, we  also have
\begin{align}\label{eq:x12}
W_\alpha(K)
=\int_0^{2\pi}\int_0^{2\pi}
|P_{e(u,w)}^\perp\gamma'(u)|
R(u,w)^{-1-\alpha}\,dw\,du.
\end{align}
On the other hand, by the supporting line property \eqref{eq:x01}, we find that
the vectors
 $P_{e(u,w)}^\perp\gamma'(u+w)$ and
 $P_{e(u,w)}^\perp\gamma'(u)$ have opposite
directions. From this, \eqref{eq:x12}, and \eqref{eq:Wend}, we  infer  that
\begin{align}\label{eq:xinhuang}
W_{\alpha}(K)
&=\int_0^{2\pi}\int_0^{2\pi}
\frac{|P_{e(u,w)}^\perp\gamma'(u+w)|+|P_{e(u,w)}^\perp\gamma'(u)|}{2}
R(u,w)^{-1-\alpha}\,dw\,du\notag\\
&=
\int_0^{2\pi}\int_0^{2\pi}
\frac{|P_{e(u,w)}^\perp(\gamma'(u+w)-\gamma'(u))|}{2}
R(u,w)^{-1-\alpha}\,dw\,du\notag\\
&
=\frac{1}{2}\int_0^{2\pi}\int_0^{2\pi}A(u,w)
R(u,w)^{-1-\alpha}\,dw\,du.
\end{align}
By H\"older's  inequality, we conclude that, for any  $w\in (0,2\pi),$
\begin{align}\label{eq:qiangua}
\int_0^{2\pi} \frac{A(u,w)}{R(u,w)}\,du
&=\int_0^{2\pi}\left[\frac{A(u,w)}{R(u,w)^{1+\alpha}}
\right]^{\frac{1}{1+\alpha}}A(u,w)^{\frac{\alpha}{1+\alpha}}\,du  \notag\\
&\le\left[\int_0^{2\pi}\frac{A(u,w)}{R(u,w)^{1+\alpha}}\,du
\right]^{\frac{1}{1+\alpha}}
\left[\int_0^{2\pi} A(u,w)\,du\right]^{\frac{\alpha}{1+\alpha}}.
\end{align}
On the other hand, from Lemma \ref{lem:secant}, it follows  that, for any $w\in (0,2\pi),$
\begin{align}\label{eq:qiangua2}
\int_0^{2\pi} \frac{A(u,w)}{R(u,w)}\,du\geq 2\pi.
\end{align}
Moreover, by the definition of $A(u,w)$ and Lemma \ref{lem:angC}(ii), we find that, for any $w\in (0,2\pi),$
\begin{align*}
\int_0^{2\pi}A(u,w)\,du\leq\int_0^{2\pi}|\gamma'(u+w)-\gamma'(u)|\,du
\leq4\pi\sin\left(\frac{w}{2}\right).
\end{align*}
This, together with \eqref{eq:qiangua} and \eqref{eq:qiangua2}, implies that, for any  $w\in (0,2\pi),$
\begin{align*}
\int_0^{2\pi}\frac{A(u,w)}{R(u,w)^{1+\alpha}}\,du\geq
\left[\int_0^{2\pi} \frac{A(u,w)}{R(u,w)}\,du\right]^{1+\alpha}
\left[\int_0^{2\pi} A(u,w)\,du\right]^{-\alpha}
\geq2^{1-\alpha}\pi\sin\left(\frac{w}{2}\right)^{-\alpha}.
\end{align*}
From this,  \eqref{eq:xinhuang}, and \eqref{eq:bukeneng}, we deduce  that
\begin{align*}
W_\alpha(K)\ge2^{-\alpha}\pi
\int_0^{2\pi} \sin\left(\frac{w}{2}\right)^{-\alpha}\,dw
=2^{1-\alpha}\pi^2\int_0^1 \sin(\pi w)^{-\alpha}\,dw
=W_\alpha(B_1).
\end{align*}
This finishes the proof of Proposition \ref{prop:FW}.
\end{proof}

\subsection{A Derivative Comparison for the Chord Functional}\label{sec:comparison}

The following proposition is the main result of this subsection.

\begin{proposition}\label{prop:dcomp}
Let $K$ be a $C^\infty_+$ planar convex body,
$L:=\operatorname{Per}(K),$ and $q\in(0,1)$. Then
\begin{align}\label{eq:x56x}
\left.\frac{d}{dr}\right|_{r=0}C_q(K+rB_1)=2qW_{1-q}(K).
\end{align}
Consequently,
\begin{align}\label{eq:x56}
\left.\frac{d}{dr}\right|_{r=0}C_q(K+rB_1)\ge
\left.\frac{d}{dr}\right|_{r=0}C_q(B_{L/(2\pi)}+rB_1).
\end{align}
\end{proposition}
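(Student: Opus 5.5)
The plan is to compute the derivative of $r\mapsto C_q(K+rB_1)$ line by line, and then convert the integral over lines into a double integral over $\partial K\times\partial K$ by a Blaschke--Petkantschin/Crofton-type change of variables.

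\textbf{Setup and line-by-line derivative.} Write $K_r:=K+rB_1$. By Proposition \ref{prop:slicing11} and convexity,
$$C_q(K_r)=\int_{\mathbb S^1}\int_{u^\perp}\lambda_r(u,z)^q\,dz\,d\sigma(u),$$
where $\lambda_r(u,z)$ is the length of $K_r\cap\ell(u,z)$. Let $\gamma$ be a positively oriented arclength parametrization of $\partial K$ with outward normal $\nu$. Then $\partial K_r$ is parametrized by $\gamma+r\nu$, which is again a $C^\infty_+$ curve.

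Fix a line $\ell(u,z)$ meeting $\operatorname{int}K$, and let its endpoints on $\partial K$ be $x$ and $y$, with $y-x$ a positive multiple of $u$. Since $u\cdot\nu(y)>0>u\cdot\nu(x)$, the intersection is transversal at both endpoints. The implicit function theorem then shows that the endpoints on $\partial K_r$ depend smoothly on $r$. Differentiating $(\gamma(s(r))+r\nu(s(r)))\cdot u^\perp=\mathrm{const}$ gives
\begin{align*}
\left.\partial_r\right|_{r=0}\lambda_r(u,z)=\frac{1}{|u\cdot\nu(x)|}+\frac{1}{|u\cdot\nu(y)|}.
\end{align*}
The same formula can be read off geometrically: each endpoint moves outward by $r$ in the normal direction, which produces a displacement of $r/|\cos|$ along the line.

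\textbf{Change of variables to boundary pairs.} Consider the map $(x,y)\mapsto(u,z)$ from ordered pairs of distinct boundary points to oriented lines, with $u=(y-x)/|y-x|$. For a strictly convex boundary this is a bijection onto the lines meeting $\operatorname{int}K$, after discarding the null set of tangent lines. I will use the classical Jacobian identity
\begin{align*}
dz\,d\sigma(u)=\frac{|u\cdot\nu(x)|\,|u\cdot\nu(y)|}{|x-y|}\,d\mathcal H^1(x)\,d\mathcal H^1(y).
\end{align*}
I will verify it by differentiating $u$ and $z=P_u^\perp x$ with respect to the two arclength parameters.

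Substituting this and using $\lambda_0=|x-y|$, formally we get
\begin{align*}
\left.\frac{d}{dr}\right|_{r=0}C_q(K_r)=q\int_{\partial K}\int_{\partial K}|x-y|^{q-2}\bigl(|u\cdot\nu(y)|+|u\cdot\nu(x)|\bigr)\,d\mathcal H^1(x)\,d\mathcal H^1(y).
\end{align*}
Because $|u\cdot\nu(y)|=(y-x)\cdot\nu(y)/|x-y|$ by the supporting line property \eqref{eq:x01}, the two terms are symmetric under swapping $x$ and $y$. Each equals $W_{1-q}(K)$, since $2+\alpha=3-q$ when $\alpha=1-q$. This yields \eqref{eq:x56x}.

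\textbf{Justifying the differentiation (main obstacle).} The delicate step is passing the $r$-derivative inside the integral. Near tangent lines, $\lambda^{q-1}/|u\cdot\nu|$ blows up. I will use a one-sided difference quotient together with concavity/monotonicity in $r$. Since $\lambda_r$ is nondecreasing in $r$, $r\mapsto\lambda_r^q$ is increasing. I will bound the quotients $(\lambda_r^q-\lambda_0^q)/r$ uniformly for $r\in(0,1]$ by $q\lambda_0^{q-1}$ times the endpoint factors evaluated along $K_\rho$, $\rho\le 1$, and apply dominated convergence. Integrability of the dominating function follows by running the same change of variables on $K_\rho$. Using the $C^2_+$ bound $(y-x)\cdot\nu(y)\le C|x-y|^2$, the resulting boundary integrand is $O(|x-y|^{q-1})$, which is integrable on $\partial K\times\partial K$, uniformly in $\rho\in[0,1]$. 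The lines tangent to $K$ and those meeting $K_r$ but not $K$ contribute $O(r\cdot r^{q})=o(r)$, since their chords have length $O(\sqrt r)$ or less and they form a set of measure $O(r)$.

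\textbf{The consequence \eqref{eq:x56}.} The closed disk $\overline B_{L/(2\pi)}$ is itself a $C^\infty_+$ body, so \eqref{eq:x56x} applies to it. The perimeter of $\overline B_{L/(2\pi)}$ is $L$, so by \eqref{eq:kshensuo} its derivative equals $2qW_{1-q}(B_1)\,[L/(2\pi)]^{q}$. On the other hand, Proposition \ref{prop:FW} with $\alpha=1-q$ gives
$$W_{1-q}(K)\ge W_{1-q}(B_1)\,[L/(2\pi)]^{q}.$$
Multiplying by $2q$ and comparing the two expressions gives \eqref{eq:x56}.
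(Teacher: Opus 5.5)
Your argument is correct and reaches \eqref{eq:x56x} by a genuinely different route from the paper. The paper never differentiates chord lengths. It uses the equality case of Proposition~\ref{prop:slicing11} to write $C_q(K+rB_1)=q(1-q)P_{1-q}(K+rB_1)$. It then builds a global smooth flow $\Phi_r$ with $\Phi_r(K)=K+r\overline B_1$ and boundary velocity $\nu_K$ (Lemma~\ref{lem:para}), and invokes the cited first variation formula for the fractional perimeter (Lemma~\ref{lem:first}) to get $q(1-q)\int_{\partial K}H^{\operatorname{int}K}_{1-q}\,d\mathcal H^1$. Finally it rewrites the fractional mean curvature as the boundary integral defining $W_{1-q}$ via the divergence theorem (Lemma~\ref{lem:Hbdi}).

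You instead stay on the chord side. Each endpoint moves along the line with speed $1/|u\cdot\nu|$, and this exactly cancels one factor of the Santal\'o-type Jacobian $|u\cdot\nu(x)|\,|u\cdot\nu(y)|/|x-y|$. That is the same Jacobian the paper itself computes in the proof of Proposition~\ref{prop:lim} to get \eqref{eq:bianjiedui}. This makes it transparent why $W_{1-q}$ appears with coefficient $2q$: one copy per endpoint. The paper's route outsources the limit--integral interchange to an external theorem and needs enough smoothness to build a $C^2$ flow. Yours is self-contained and works for $C^2_+$ bodies, but you must justify the interchange yourself. Your deduction of \eqref{eq:x56} from \eqref{eq:x56x}, \eqref{eq:kshensuo} and Proposition~\ref{prop:FW} is the same as the paper's.

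On that interchange, your majorant as described does not quite close. You bound the quotient by $q\lambda_0^{q-1}$ times the endpoint factors of $K_\rho$. However, the change of variables on $K_\rho$ produces $\lambda_\rho^{q-1}$, not $\lambda_0^{q-1}$. Since $\lambda_0^{q-1}\ge\lambda_\rho^{q-1}$, integrability of your majorant does not follow from that computation. There are two easy repairs.
\begin{itemize}
\item[(i)] Use the uniform bound $|u\cdot\nu_\rho|\ge c\lambda_\rho\ge c\lambda_0$ for $\rho\in[0,1]$, which holds because the radii of curvature of $\partial K_\rho$ are at most $R_{\max}+1$ (Blaschke containment). This gives the $r$-independent majorant $C\lambda_0^{q-2}$. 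It is integrable by the change of variables on $K$ itself together with $|u\cdot\nu|\le C|x-y|$.
\item[(ii)] More cleanly, write $\lambda_r^q-\lambda_0^q=\int_0^r q\lambda_\rho^{q-1}\partial_\rho\lambda_\rho\,d\rho$ for every line. This stays valid for lines that first meet $K_\rho$ at some $\rho_0>0$, since $\rho\mapsto\lambda_\rho^q$ is monotone and absolutely continuous. Then Tonelli and the change of variables on each $K_\rho$ give $C_q(K_r)-C_q(K)=\int_0^r 2qW_{1-q}(K_\rho)\,d\rho$. Continuity of $\rho\mapsto W_{1-q}(K_\rho)$ then yields the right derivative at every $r\ge0$, which is exactly what the proof of Theorem~\ref{thm:conv} uses. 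This version also makes the separate boundary-layer estimate unnecessary.
\end{itemize}
A minor slip: with chords of length $O(\sqrt r)$, the newly entering lines contribute $O(r\cdot r^{q/2})$, not $O(r\cdot r^{q})$. This is still $o(r)$.
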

The rest of this subsection is devoted to the proof of Proposition \ref{prop:dcomp}.
Let $\Omega\subset\mathbb{R}^2$ be a bounded $C^2$ domain and  $\alpha\in(0,1)$.
Denote the \emph{fractional mean curvature} by setting, for any $x\in\partial \Omega,$
\begin{align*}
H_\alpha^\Omega(x):=\mathrm{p.v.}\int_{\mathbb{R}^2}
\frac{\mathbf 1_{\mathbb{R}^2\setminus\Omega}(y)-\mathbf 1_\Omega(y)}{|x-y|^{2+\alpha}}\,dy,
\end{align*}
where $\mathrm{p.v.}$ denotes the integral taken according to the Cauchy principal value sense.
\begin{lemma}\label{lem:Hbdi}
Let $\Omega\subset\mathbb{R}^2$ be a bounded domain with $C^2$ boundary and
$\alpha\in(0,1)$. Then, for any $x\in\partial\Omega$,
\begin{align}\label{eq:Halpha}
H_\alpha^\Omega(x)=\frac{2}{\alpha}
\int_{\partial\Omega}\frac{(y-x)\cdot\nu_\Omega(y)}{|x-y|^{2+\alpha}}\,d\mathcal{H}^1(y),
\end{align}
where $\nu_{\Omega}$ denotes the outward
unit normal along $\partial\Omega.$
Consequently, if $K$ is  a $C_{+} ^2$ planar convex body, then
\begin{align*}
\int_{\partial K}H^{\operatorname{int} K}_{\alpha}(x)\,d\mathcal{H}^1(x)=
\frac{2}{\alpha}W_{\alpha}(K).
\end{align*}
\end{lemma}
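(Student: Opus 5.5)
The approach is the standard divergence-theorem argument for fractional mean curvature, applied after excising a small ball. Fix $x\in\partial\Omega$ and $\varepsilon>0$ small. We use the identity
\begin{align*}
\operatorname{div}_y\left(\frac{y-x}{|y-x|^{2+\alpha}}\right)=\frac{2-(2+\alpha)}{|y-x|^{2+\alpha}}=-\frac{\alpha}{|y-x|^{2+\alpha}},
\qquad y\neq x,
\end{align*}
which holds in $\mathbb{R}^2$. Set $\Phi(y):=\frac{y-x}{|y-x|^{2+\alpha}}$. By definition of the principal value,
$H_\alpha^\Omega(x)=\lim_{\varepsilon\to0^+}\bigl[\int_{\Omega^\complement\setminus B(x,\varepsilon)}-\int_{\Omega\setminus B(x,\varepsilon)}\bigr]|x-y|^{-2-\alpha}\,dy$.
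Each piece can be written as $-\frac1\alpha\int\operatorname{div}\Phi$.

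The steps are as follows.

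\textbf{Step 1.} Apply the divergence theorem on $\Omega\setminus \overline{B(x,\varepsilon)}$, which is Lipschitz for a.e. small $\varepsilon$ since $\partial\Omega$ is $C^2$ and hence transversal to small circles centered at $x$. This gives
$$\int_{\Omega\setminus B(x,\varepsilon)}\frac{dy}{|x-y|^{2+\alpha}}=-\frac1\alpha\Bigl[\int_{\partial\Omega\setminus B(x,\varepsilon)}\Phi\cdot\nu_\Omega\,d\mathcal H^1-\int_{\Omega\cap\partial B(x,\varepsilon)}\varepsilon^{-1-\alpha}\,d\mathcal H^1\Bigr],$$
where the normal on the small arc points toward $x$.

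\textbf{Step 2.} Do the same on the unbounded set $\Omega^\complement\setminus B(x,\varepsilon)$. Truncate with $B(x,R)$ and note that the flux through $\partial B(x,R)$ is $2\pi R^{-\alpha}\to0$. The outward normal of $\Omega^\complement$ on $\partial\Omega$ is $-\nu_\Omega$, so we obtain
$$\int_{\Omega^\complement\setminus B(x,\varepsilon)}\frac{dy}{|x-y|^{2+\alpha}}=-\frac1\alpha\Bigl[-\int_{\partial\Omega\setminus B(x,\varepsilon)}\Phi\cdot\nu_\Omega\,d\mathcal H^1-\varepsilon^{-1-\alpha}\mathcal H^1(\Omega^\complement\cap\partial B(x,\varepsilon))\Bigr].$$

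\textbf{Step 3.} Subtract Step 1 from Step 2. The boundary terms combine to $\frac2\alpha\int_{\partial\Omega\setminus B(x,\varepsilon)}\Phi\cdot\nu_\Omega$. The arc terms contribute
$\frac1\alpha\varepsilon^{-1-\alpha}\bigl[\mathcal H^1(\Omega^\complement\cap\partial B(x,\varepsilon))-\mathcal H^1(\Omega\cap\partial B(x,\varepsilon))\bigr]$.

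\textbf{Step 4 (main obstacle).} Two facts must be shown.

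First, the arc-difference term tends to $0$. Since $\partial\Omega$ is $C^2$, near $x$ it is a graph $t\mapsto g(t)$ with $g(0)=g'(0)=0$ and $|g(t)|\le Ct^2$. The two arcs therefore differ in length by $O(\varepsilon^2)$, and the term is $O(\varepsilon^{1-\alpha})\to0$.

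Second, $\int_{\partial\Omega}\Phi\cdot\nu_\Omega\,d\mathcal H^1$ converges absolutely. Using the graph representation again, $|(y-x)\cdot\nu_\Omega(y)|\le C|y-x|^2$ for $y\in\partial\Omega$ near $x$. The integrand is thus $O(|y-x|^{-\alpha})$, which is integrable on the curve since $\alpha<1$. Hence the limit $\varepsilon\to0^+$ yields \eqref{eq:Halpha}. The same estimates also show that the principal value exists.

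Finally, if $K$ is a $C^2_+$ planar convex body, then $\operatorname{int}K$ is a bounded domain with $C^2$ boundary and $\nu_{\operatorname{int}K}=\nu_K$. Integrating \eqref{eq:Halpha} over $x\in\partial K$ and comparing with the definition \eqref{e3.1} gives $\int_{\partial K}H_\alpha^{\operatorname{int}K}\,d\mathcal H^1=\frac2\alpha W_\alpha(K)$. The double integral is finite by the uniform $O(|x-y|^{-\alpha})$ bound, since the constant $C$ is uniform by compactness of $\partial K$. Moreover, $W_\alpha(K)$ is well defined, and in fact its integrand is nonnegative by convexity.
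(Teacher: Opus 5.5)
Your proposal is correct and follows the paper's proof essentially step for step: the divergence theorem for $F_x(y)=(y-x)/|y-x|^{2+\alpha}$ on the excised interior region and the truncated exterior region, after which the $O(\rho^2)$ arc-length difference kills the circle terms as $\rho\to0^+$. The differences are minor. You let $R\to\infty$, whereas the paper computes the integral over $\mathbb{R}^2\setminus B(x,R)$ exactly, so that it cancels the outer flux. You also explicitly justify the absolute convergence of $\int_{\partial\Omega}F_x\cdot\nu_\Omega\,d\mathcal{H}^1$ via the bound $|(y-x)\cdot\nu_\Omega(y)|\le C|y-x|^2$, which the paper leaves implicit.
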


\begin{proof}
Let $x\in \partial \Omega$ and $R\in (0,\infty)$ be  such that $\Omega\subset B(x,R)$. Then
\begin{align*}
H_{\alpha}^\Omega(x)
&=\lim_{\rho\to 0^+}\left[\int_{\mathbb{R}^2\setminus\Omega\setminus B(x,\rho)}
\frac{dy}{|x-y|^{2+\alpha}}-\int_{\Omega\setminus B(x,\rho)}
\frac{dy}{|x-y|^{2+\alpha}}\right]
=:\lim_{\rho\to 0^+}H^{\Omega}_{\alpha,\rho}(x).
\end{align*}
For any  $y\in\mathbb{R}^2\setminus\{x\}$, let
\begin{align*}
F_x(y):=\frac{y-x}{|y-x|^{2+\alpha}}.
\end{align*}
By a direct computation, we obtain, for any $y\in\mathbb{R}^2\setminus\{x\}$,
\begin{align}
\operatorname{div}_y F_x(y)=-\alpha |y-x|^{-2-\alpha}.
\label{eq:sanduX}
\end{align}
Let
\begin{align*}
D_\rho^+:=\left[B(x,R)\setminus\Omega\right]\setminus B(x,\rho)\ \
\text{and}\ \ D_\rho^-:=\Omega\setminus B(x,\rho).
\end{align*}
Then, from \eqref{eq:sanduX} and the divergence theorem, we infer that
\begin{align*}
\int_{D^-_{\rho}}\frac{dy}{|x-y|^{2+\alpha}}&=-\frac{1}{\alpha}
\int_{\partial D_\rho^-}F_x(y)\cdot\nu_{D_\rho^-}(y)\,d\mathcal{H}^1(y)\\
&=-\frac{1}{\alpha}\int_{\partial\Omega\setminus B(x,\rho)}
F_x(y)\cdot\nu_\Omega(y)\,d\mathcal{H}^1(y)
+\frac{\rho^{-1-\alpha}}{\alpha}
\mathcal{H}^1\left(\partial B(x,\rho)\cap\Omega\right)
\end{align*}
and
\begin{align*}
\int_{\mathbb{R}^2\setminus\Omega\setminus B(x,\rho)}
\frac{dy}{|x-y|^{2+\alpha}}
&=\int_{\mathbb{R}^2\setminus B(x,R)}\frac{dy}{|x-y|^{2+\alpha}}+\int_{D_\rho^+}\frac{dy}{|x-y|^{2+\alpha}}\\
&=\frac{2\pi R^{-\alpha}}{\alpha}-\frac{1}{\alpha}
\int_{\partial D_\rho^+}F_x(y)\cdot\nu_{D_\rho^+}(y)\,
d\mathcal{H}^1(y)\\
&=\frac{1}{\alpha}\int_{\partial\Omega\setminus B(x,\rho)}
F_x(y)\cdot\nu_\Omega(y)\,d\mathcal{H}^1(y)
+\frac{\rho^{-1-\alpha}}{\alpha}
\mathcal{H}^1\left(\partial B(x,\rho)\cap[\mathbb{R}^2\setminus\Omega]\right).
\end{align*}
Thus,
\begin{align}\label{eq:boundary}
H_{\alpha,\rho}^\Omega(x)&=\frac{2}{\alpha}\int_{\partial\Omega\setminus B(x,\rho)}
F_x(y)\cdot\nu(y)\,d\mathcal{H}^1(y)\notag\\
&\quad+\frac{\rho^{-1-\alpha}}{\alpha}\left[
\mathcal{H}^1\left(\partial B(x,\rho)\cap[\mathbb{R}^2\setminus\Omega]\right)
-\mathcal{H}^1\left(\partial B(x,\rho)\cap\Omega\right)
\right].
\end{align}
Since $\partial\Omega$ is $C^2$, it follows that
\begin{align*}
\mathcal{H}^1\left(\partial B(x,\rho)\cap[\mathbb{R}^2\setminus\Omega]\right)
-\mathcal{H}^1\left(\partial B(x,\rho)\cap\Omega\right)=O(\rho^2).
\end{align*}
Using this and letting $\rho\to 0^+$ in \eqref{eq:boundary}, we  conclude that
\eqref{eq:Halpha} holds. This finishes the proof of Lemma \ref{lem:Hbdi}.
\end{proof}

The following first variation formula is a direct consequence of
\cite[Theorem 4.26]{l15} (see also \cite[Theorem 6.1]{ffmmm15}).
\begin{lemma}\label{lem:first}
Let $\Omega\subset\mathbb{R}^2$ be a bounded domain with $C^2$ boundary,
and let $\alpha\in(0,1)$. Let $r_0\in(0,\infty)$ and
\begin{align*}
\Phi:(-r_0,r_0)\times\mathbb{R}^2\to\mathbb{R}^2
\end{align*}
be a map such that the following statements hold:

\begin{itemize}
\item[{\rm (i)}] If $\Phi_r(x):=\Phi(r,x)$, then $\Phi_0=\operatorname{Id}$.

\item[{\rm (ii)}] For any given $r\in(-r_0,r_0)$, $\Phi_r$ is a $C^2$
diffeomorphism of $\mathbb{R}^2$ onto itself.

\item[{\rm (iii)}] $\Phi$ is of class $C^2$.
\end{itemize}
Then
\begin{align*}
\left.\frac{d}{dr}P_\alpha(\Phi_r(\Omega))\right|_{r=0}
=\int_{\partial \Omega}H^\Omega_\alpha(x)\phi(x)\cdot\nu_\Omega(x)\,d\mathcal{H}^{1}(x),
\end{align*}
where
$\phi:=\left.\frac{\partial}{\partial r}\Phi_r\right|_{r=0}$.
\end{lemma}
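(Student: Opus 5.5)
The plan is to write the perturbed fractional perimeter as an integral over the fixed set $\Omega\times(\mathbb R^2\setminus\Omega)$, differentiate under the integral sign, and recognize the derivative as a sum of two divergences. The divergence theorem then converts it into a boundary integral, which is identified with the fractional mean curvature.

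\emph{Step 1 (pull back and differentiate).} Write $k(z):=|z|^{-2-\alpha}$ and $J_r:=\det D\Phi_r$. Since $\Phi_r$ is a diffeomorphism, $\Phi_r(\mathbb R^2\setminus\Omega)=\mathbb R^2\setminus\Phi_r(\Omega)$, and changing variables in both slots gives
\begin{align*}
P_\alpha(\Phi_r(\Omega))=\int_\Omega\int_{\mathbb R^2\setminus\Omega}k\bigl(\Phi_r(x)-\Phi_r(y)\bigr)J_r(x)J_r(y)\,dy\,dx .
\end{align*}
Differentiating the integrand at $r=0$ and using $\partial_rJ_r|_{r=0}=\operatorname{div}\phi$, we obtain
\begin{align*}
G(x,y):=\nabla k(x-y)\cdot(\phi(x)-\phi(y))+k(x-y)\bigl(\operatorname{div}\phi(x)+\operatorname{div}\phi(y)\bigr).
\end{align*}
Because $\Phi$ is $C^2$, the difference quotients in $r$ are bounded by $C|x-y|^{-2-\alpha}$ uniformly for small $r$. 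This uses $|\phi(x)-\phi(y)|\lesssim|x-y|$, together with the corresponding bound for $\Phi_r$ in a neighbourhood of $r=0$ (uniform on compact sets; if $\Phi$ is not compactly supported, I would first note that only a neighbourhood of $\overline\Omega$ matters, up to a harmless tail). Since $P_\alpha(\Omega)<\infty$ for a bounded $C^2$ domain, dominated convergence gives $\frac{d}{dr}P_\alpha(\Phi_r\Omega)|_{r=0}=\iint_{\Omega\times\Omega^\complement}G$. The tail at infinity is also dominated, since $k$ is integrable away from the diagonal and $\Omega$ is bounded.

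\emph{Step 2 (divergence structure).} Observe that $G(x,y)=\operatorname{div}_x\bigl(\phi(x)k(x-y)\bigr)+\operatorname{div}_y\bigl(\phi(y)k(x-y)\bigr)$. To integrate by parts without facing the diagonal singularity, I would replace $k$ by the regularized kernel $k_\varepsilon(z):=(|z|^2+\varepsilon^2)^{-(2+\alpha)/2}$, which is smooth and dominated by $k$. With $k_\varepsilon$, Fubini and the divergence theorem in each variable are legitimate; in the $y$-integral over the unbounded set $\Omega^\complement$ the flux at infinity vanishes. This yields
\begin{align*}
\iint_{\Omega\times\Omega^\complement}G_\varepsilon
=\int_{\partial\Omega}\phi\cdot\nu_\Omega(x)\Bigl[\int_{\Omega^\complement}k_\varepsilon(x-y)\,dy-\int_{\Omega}k_\varepsilon(x-y)\,dy\Bigr]d\mathcal H^1(x),
\end{align*}
where the minus sign comes from the outward normal of $\Omega^\complement$ being $-\nu_\Omega$. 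The left-hand side converges to $\iint G$ as $\varepsilon\to0^+$ by dominated convergence, since $|G_\varepsilon|\lesssim|x-y|^{-2-\alpha}$ uniformly in $\varepsilon$.

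\emph{Step 3 (main obstacle: the limit on the boundary).} It remains to show that the bracket $H_{\alpha,\varepsilon}(x)$ converges to $H^\Omega_\alpha(x)$ boundedly (or at least in $L^1(\partial\Omega)$) as $\varepsilon\to0^+$. The integrand $\mathbf 1_{\Omega^\complement}-\mathbf 1_\Omega$ is not absolutely integrable against $k$, so this is the delicate point. I would use the $C^2$ regularity uniformly in $x\in\partial\Omega$. In a ball $B(x,\rho_0)$ of radius independent of $x$, the set $\Omega$ differs from the half-plane $\{(y-x)\cdot\nu_\Omega(x)<0\}$ only inside a region $\{|(y-x)\cdot\nu|\le C|y-x|^2\}$. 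By the symmetry of the radial kernel $k_\varepsilon$, the half-plane contributions cancel exactly. The error region has angular measure $O(t)$ on the circle of radius $t$, so its contribution is bounded by $\int_0^{\rho_0}t\cdot t\cdot t^{-2-\alpha}\,dt<\infty$, uniformly in $\varepsilon$ and $x$; this is the same $O(\rho^2)$ mechanism as in the proof of Lemma \ref{lem:Hbdi}. Outside $B(x,\rho_0)$ the convergence $k_\varepsilon\to k$ is uniform. This gives uniform boundedness of $H_{\alpha,\varepsilon}$ and pointwise convergence to the principal value $H^\Omega_\alpha(x)$ (the Lebesgue integral over the symmetric error region, plus the p.v.\ which is produced by the cancelled half-plane part). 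Dominated convergence on $\partial\Omega$ then yields the stated formula. Alternatively, one can cite \cite[Theorem 4.26]{l15} directly, as the paper does.
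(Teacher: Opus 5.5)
Your proof is correct, and it takes a genuinely different route from the paper. The paper gives no argument of its own for Lemma \ref{lem:first}; it simply invokes \cite[Theorem 4.26]{l15} (see also \cite[Theorem 6.1]{ffmmm15}). You derive the formula from scratch, in four steps:
\begin{itemize}
\item pull back to the fixed product $\Omega\times\Omega^\complement$;
\item differentiate under the integral using the $r$-uniform majorant $C|x-y|^{-2-\alpha}$, which is integrable because $P_\alpha(\Omega)<\infty$;
\item recognize $G=\operatorname{div}_x(\phi(x)k(x-y))+\operatorname{div}_y(\phi(y)k(x-y))$ and integrate by parts with the regularized kernel $k_\varepsilon$;
\item pass to the limit on $\partial\Omega$ by comparison with the tangent half-plane, the same $O(\rho^2)$ mechanism used in Lemma \ref{lem:Hbdi}.
\end{itemize}
The citation buys brevity. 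Your route buys a self-contained proof that applies verbatim to an arbitrary $C^2$ family $\Phi_r$ as in the statement, so no matching of hypotheses with the cited theorem is needed. It shows directly that only $\phi\cdot\nu_\Omega$ on $\partial\Omega$ matters. It yields, along the way, the uniform boundedness of $H^\Omega_\alpha$ on $\partial\Omega$. It also visibly uses less than the stated hypotheses; for instance, a $C^{1,\beta}$ boundary with $\beta>\alpha$ would already suffice for your Step 3.

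One step should be made precise rather than dismissed as a harmless tail. Both the domination in Step 1 and the vanishing of the flux at infinity in Step 2 need $\Phi_r-\operatorname{Id}$ to be supported in a fixed compact set. Hypotheses (i)--(iii) give no control of $\Phi_r$ at infinity. A far-away area-preserving twist, rotating $y$ by the angle $rf(|y|)$ with $f$ growing fast, already destroys the integrable majorant. Since $P_\alpha(\Phi_r(\Omega))$ depends only on $\Phi_r|_{\overline\Omega}$, this is easily fixed. Take $\chi\in C_{\rm c}^\infty(\mathbb{R}^2)$ with $\chi\equiv1$ near $\overline\Omega$ and set $\widetilde\Phi_r:=\operatorname{Id}+\chi(\Phi_r-\operatorname{Id})$. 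For $|r|$ small, $\|D\widetilde\Phi_r-I\|_{L^\infty}<\frac12$, so $\widetilde\Phi_r$ is a $C^2$ diffeomorphism of $\mathbb{R}^2$. It satisfies $\widetilde\Phi_r(\Omega)=\Phi_r(\Omega)$ and has the same velocity field near $\partial\Omega$. In the paper's application through Lemma \ref{lem:para} this is automatic anyway, since there $\Phi_r$ is the identity outside $U_\eta$.
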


In order  to  use Lemma \ref{lem:first}, we first prove the following  conclusion.

\begin{lemma}\label{lem:para}
Let $K\subset\mathbb{R}^2$ be a planar convex body with $C^\infty$ boundary.
Then there exist $r_0\in(0,\infty)$ and a map
\begin{align*}
\Phi:(-r_0,r_0)\times\mathbb{R}^2\to\mathbb{R}^2
\end{align*}
such that the following assertions hold:
\begin{itemize}
\item [$\mathrm{(i)}$] If $\Phi_r(z):=\Phi(r,z)$, then
$\Phi_0=\operatorname{Id}$.
\item [$\mathrm{(ii)}$] For any given $r\in (-r_0,r_0)$,  $\Phi_r$ is a $C^\infty$ diffeomorphism of
$\mathbb{R}^2$ onto itself.
\item [$\mathrm{(iii)}$] $\Phi$ is of class $C^\infty$.
\item [$\mathrm{(iv)}$] For any given $r\in [0,r_0)$,
\begin{align*}
\Phi_r(K)=K+r\overline{B}_1=\left\{z\in\mathbb{R}^2:\ \operatorname{dist}(z,K)\leq r\right\}.
\end{align*}
\item [$\mathrm{(v)}$] For any $x\in \partial K $,
\begin{align*}
\left.\frac{\partial}{\partial r}\Phi_r(x)\right|_{r=0}=\nu_K(x),
\end{align*}
where $\nu_{K}$ denotes the outward unit normal along $\partial K.$
\end{itemize}
\end{lemma}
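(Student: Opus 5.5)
We will construct $\Phi$ from the signed distance function of $K$, cut off away from $\partial K$. Let $d$ be the signed distance to $\partial K$: negative in $\operatorname{int}K$, positive outside. Since $\partial K$ is a compact $C^\infty$ curve, the tubular neighbourhood theorem gives some $\delta\in(0,\infty)$ such that on $U_\delta:=\{z:\ |d(z)|<\delta\}$ the following hold. Each $z$ has a unique nearest point $\pi(z)\in\partial K$, we can write $z=\pi(z)+d(z)\nu_K(\pi(z))$, the function $d$ is $C^\infty$ there, and $\nabla d(z)=\nu_K(\pi(z))$ with $|\nabla d|=1$.

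\textbf{Construction.} Choose $\eta\in C_{\rm c}^\infty(\mathbb{R})$ with $\eta\equiv1$ on $[-\delta/2,\delta/2]$ and $\operatorname{supp}\eta\subset(-\delta,\delta)$. Define the smooth compactly supported vector field
$$X(z):=\eta(d(z))\nabla d(z)$$
on $U_\delta$, and extend it by $0$ outside $U_\delta$. Let $\Phi_r$ be the flow of $X$ at time $r$. Since $X$ is $C^\infty$ with compact support, standard ODE theory makes the flow global in time, with $\Phi_0=\operatorname{Id}$. Each $\Phi_r$ is a $C^\infty$ diffeomorphism of $\mathbb{R}^2$ (its inverse is $\Phi_{-r}$), and $\Phi$ is jointly $C^\infty$. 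This gives (i)--(iii) for any $r_0$. Property (v) holds because $\partial_r\Phi_r(x)|_{r=0}=X(x)=\nabla d(x)=\nu_K(x)$ for $x\in\partial K$, where $\eta(0)=1$.

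\textbf{Key step (iv).} Take $r_0:=\delta/2$. In the region $|d|<\delta/2$ we have $X=\nabla d$. The curves $t\mapsto x+t\nu_K(x)$ for $x\in\partial K$ are integral curves of $\nabla d$: along them $d=t$ and the nearest point stays $x$, by uniqueness of the projection in the tube. Uniqueness of ODE solutions therefore gives
$$\Phi_r(x)=x+r\nu_K(x)\quad\text{for }x\in\partial K,\ |r|<\delta/2,$$
and more generally $d(\Phi_r(z))=d(z)+r$ as long as the trajectory stays in $\{|d|<\delta/2\}$. Hence for $r\in[0,r_0)$, $\Phi_r$ maps $\partial K$ onto the level set $\{d=r\}$.

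\textbf{Conclusion of (iv).} For $r\ge0$ we have $\{d=r\}=\{\operatorname{dist}(\cdot,K)=r\}$, which is the boundary of $K_r:=\{\operatorname{dist}(\cdot,K)\le r\}=K+r\overline B_1$. The set $\Phi_r(K)$ is compact with $\partial\Phi_r(K)=\Phi_r(\partial K)=\partial K_r$, since $\Phi_r$ is a homeomorphism. Two compact sets, each the closure of a bounded domain, with the same Jordan boundary coincide by the Jordan curve theorem. Alternatively, $\Phi_r(K)$ is connected, contains the points of $K$ deep inside (where $X=0$), and cannot cross $\partial K_r$. Either way $\Phi_r(K)=K+r\overline B_1$.

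The main obstacle is the smoothness and uniqueness claims for $d$ and $\pi$ in a two-sided tube. We quote these from the standard tubular neighbourhood and distance-function results for compact $C^\infty$ hypersurfaces (Gilbarg--Trudinger, Lemma 14.16, or Foote). On the outside, convexity alone already gives uniqueness of the nearest point for every $r>0$.
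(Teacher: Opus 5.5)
Your proof is correct, but it is organized differently from the paper's. The paper does not flow along a vector field. It uses the tubular diffeomorphism $\Psi(x,t)=x+t\nu_K(x)$ and the identity $d_K(x+t\nu_K(x))=t$, defines $\Phi_r:=\Psi\circ F_r\circ\Psi^{-1}$ on the tube with $F_r(x,t)=(x,t+r\chi(t))$, and sets $\Phi_r:=\operatorname{Id}$ outside.

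\textbf{The paper's route.} This map is affine in $r$, so (iii) and (v) are immediate. However, (ii) forces $r_0<1/(2\|\chi'\|_{L^\infty})$ so that $t\mapsto t+r\chi(t)$ is strictly increasing. The gluing at the edge of the tube and the inverse are then checked by hand. In return, (iv) is a direct monotonicity computation: $d_K(\Phi_r(x+t\nu_K(x)))=t+r\chi(t)\le r$ if and only if $t\le 0$.

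\textbf{Your route.} The flow of $X=\eta(d)\nabla d$ gives (i)--(iii) for free from ODE theory, since it is global in time with $\Phi_r^{-1}=\Phi_{-r}$, and (v) is just $X|_{\partial K}=\nu_K$. The cost is that you pass from $\Phi_r(\partial K)=\partial K_r$ to $\Phi_r(K)=K_r$ by a topological argument: the Jordan curve theorem, or openness and relative closedness of $\Phi_r(\operatorname{int}K)$ in $\operatorname{int}K_r$. Your ``alternatively'' sketch only yields $\Phi_r(K)\subset K_r$ unless you add that relative-closedness step.

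\textbf{Avoiding the topology.} You could skip this step. In the tube your flow preserves each normal line and moves the normal coordinate by the scalar autonomous ODE $\dot t=\eta(t)$. Its time-$r$ map is an increasing bijection of $(-\delta,\delta)$ that sends $0$ to $r$ for $0\le r<\delta/2$. Hence it carries $K\cap U_\delta=\{t\le 0\}$ onto $\{t\le r\}$ while fixing $K\setminus U_\delta$, exactly mirroring the paper's monotonicity step.

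\textbf{A small slip.} The equality $\{d=r\}=\{\operatorname{dist}(\cdot,K)=r\}$ fails at $r=0$, where the right side is all of $K$. This is harmless, since $\Phi_0=\operatorname{Id}$.
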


\begin{proof}
For any $z\in\mathbb{R}^2,$ let
\begin{align*}
d_K(z):=\operatorname{dist}(z,K)
-\operatorname{dist}\left(z,\mathbb{R}^2\setminus K\right).
\end{align*}
Then $K=\{z\in\mathbb{R}^2:d_K(z)\le 0\}$ and $\partial K=\{z\in\mathbb{R}^2:d_K(z)=0\}$.
By the assumption that  $\partial K$ is compact and $C^\infty$ and by the tubular neighborhood theorem
(see, for instance, \cite[Theorem 6.24]{l13}), we conclude that there exists $\eta\in(0,\infty)$ such that
\begin{align*}
\Psi:\partial K\times(-\eta,\eta)\to \Psi(\partial K\times(-\eta,\eta))
\end{align*}
is a $C^\infty$ diffeomorphism, where, for any $(x,t)\in\partial K\times(-\eta,\eta)$,
\begin{align*}
\Psi(x,t):=x+t\nu_K(x).
\end{align*}
We claim that, for any $(x,t)\in\partial K\times(-\eta,\eta)$,
\begin{align}\label{eq:signed}
\operatorname{dist}(x+t\nu_K(x),\partial K)=|t|.
\end{align}
If $t=0,$ then \eqref{eq:signed} is trivial. Assume that $t\not=0.$
Let $(x,t)\in\partial K\times(-\eta,\eta)$ and $z:=x+t\nu_K(x).$
Let $y\in\partial K$ be such that
$|z-y|=\operatorname{dist}(z,\partial K).$
Let $\gamma$ be a local $C^\infty$ parametrization of
$\partial K$ near $y$ with $\gamma(0)=y$. Since the function
$s\mapsto |z-\gamma(s)|^2$ has a local minimum at $s=0$, we deduce that
\begin{align*}
0=\left.\frac{d}{ds}|z-\gamma(s)|^2\right|_{s=0}=-2(z-y)\cdot\gamma'(0).
\end{align*}
Thus, $z-y$ belongs to the normal space of $\partial K$ at $y$ and hence
there exists $s\in\mathbb{R}$ such that
\begin{align*}
z=y+s\nu_K(y).
\end{align*}
From this and the injectivity of $\Psi$, it follows that  $y=x$ and $s=t$. Thus,
\begin{align*}
\operatorname{dist}(x+t\nu_K(x),\partial K)
=\operatorname{dist}(z,\partial K)=|z-y|=|t|.
\end{align*}
This shows \eqref{eq:signed}.
Using \eqref{eq:signed}, we obtain, for any
$(x,t)\in\partial K\times(-\eta,\eta)$,
\begin{align*}
d_K(x+t\nu_K(x))=t.
\end{align*}
Thus,
\begin{align*}
\Psi(\partial K\times(-\eta,\eta))=\{z\in\mathbb{R}^2:|d_K(z)|<\eta\}=:U_\eta.
\end{align*}
Choose $\chi\in C_{\rm c}^\infty((-\eta,\eta))$ such that $0\le \chi\le 1$, $\chi\equiv 1$
on $(-\frac{\eta}{2},\frac{\eta}{2}),$ and $\chi\equiv 0$
on $(-\infty,-\frac{3\eta}{4})\cup (\frac{3\eta}{4},\infty).$
Let $r_0\in(0,\min\{\frac{\eta}{4},\frac{1}{2\|\chi'\|_{L^\infty}}\})$ and $r\in(-r_0,r_0)$.
For any  $t\in (-\eta,\eta)$, define
\begin{align*}
f_r(t):=t+r\chi(t).
\end{align*}
Observe that $\lim_{t\to-\eta^+} f_r(t)=-\eta,$ $\lim_{t\to\eta^-} f_r(t)=\eta,$ and,
for any $t\in (-\eta,\eta)$,
\begin{align*}
f_r'(t)=1+r\chi'(t)\ge 1-|r|\,\|\chi'\|_{L^\infty((-\eta,\eta))}>\frac{1}{2}.
\end{align*}
This implies that
$f_r$ is strictly increasing on $(-\eta,\eta)$
and maps $(-\eta,\eta)$ bijectively onto $(-\eta,\eta)$.
Thus, $f_r$ is a $C^\infty$ diffeomorphism of $(-\eta,\eta)$ onto itself.

We define $\Phi:\ (-r_0,r_0)\times\mathbb{R}^2\to\mathbb{R}^2$ by
\begin{align*}
\Phi(r,z):=\begin{cases}
x+f_r(t)\nu_K(x) & \text{if } z=x+t\nu_K(x)\in U_\eta,\\
z & \text{if } z\in\mathbb{R}^2\setminus U_\eta.
\end{cases}
\end{align*}
Moreover,  let $\Phi_r(z):=\Phi(r,z)$. We now verify the property of $\Phi.$
By definitions of $\Phi$ and $f_r$, it is obvious that (i) holds.

Next, we prove that $\Phi_r$
is a $C^2$ map on $\mathbb{R}^2.$ For any $(x,t)\in\partial K \times (-\eta,\eta)$, let
$F_r(x,t):=(x,f_r(t))$. Then $F_r$ is a $C^\infty$ diffeomorphism of
$\partial K \times (-\eta,\eta)$ onto  itself. Observe that, in $U_\eta$,
the map $\Phi_r$ can be written as
$\Phi_r=\Psi\circ F_r\circ\Psi^{-1}.$
Since $\Psi$ is a $C^\infty$ diffeomorphism and $F_r$ is a $C^\infty$ diffeomorphism, it follows that
$\Phi_r$ is $C^\infty$ in $U_\eta$. On $\mathbb{R}^2\setminus U_\eta$, $\Phi_r$ is the identity map.
Thus, to obtain the desired conclusion, it suffices to
verify that the two definitions are compatible near $\partial U_\eta$.
By the definition of $\chi$, we find that, for any $z=x+t\nu_K(x)\in U_{\eta}$ with
$x\in\partial K$ and  $\frac{3\eta}{4}\leq |t|<\eta,$
\begin{align*}
\Phi_r(z)=x+f_r(t)\nu_K(x)=x+t\nu_K(x)=z.
\end{align*}
This implies that  $\Phi_r$ agrees with the identity map in the open set
\begin{align*}
\left\{x+t\nu_K(x):x\in\partial K,\ \frac{3\eta}{4}<|t|<\eta\right\},
\end{align*}
which is a neighborhood of $\partial U_\eta$. Consequently,
$\Phi_r$ is a  $C^\infty$ map on $\mathbb{R}^2$.
On the other hand, the inverse of $\Phi_r$ is given by
\begin{align*}
\Phi_r^{-1}(z):=\begin{cases}
x+f_r^{-1}(t)\nu_K(x), & \text{if } z=x+t\nu_K(x)\in U_\eta,\\
z, & \text{if } z\in\mathbb{R}^2\setminus U_\eta.
\end{cases}
\end{align*}
Similarly, $\Phi_r^{-1}$ is also a $C^\infty$ map on $\mathbb{R}^2.$
Thus,  $\Phi_r$ is a $C^\infty$ diffeomorphism of $\mathbb{R}^2$ onto itself. This shows (ii).

Since the map $(r,t)\mapsto f_r(t)$ is $C^\infty$, the same  argument shows that
$\Phi(r,z):\ (-r_0,r_0)\times\mathbb{R}^2\to\mathbb{R}^2$
is of class $C^\infty$, which proves (iii).

Let $r\in [0,r_0)$ and
\begin{align*}
A_r:=\{z\in\mathbb{R}^2:d_K(z)\le r\}.
\end{align*}
Then $A_r=K+r\overline{B}_1.$ Thus, to show (iv), it suffices to prove
$\Phi_r(K)=A_r.$
Let $z\in K$. If $z\notin U_\eta$, then $\Phi_r(z)=z\in K\subset A_r$. If $z\in U_\eta$,
then $z=x+t\nu_K(x)$ with $x\in\partial K $ and $t\in (-\eta,0]$. Since $f_r$ is increasing, it follows that
\begin{align*}
d_K(\Phi_r(z))=d_K(x+f_r(t)\nu_K(x))=f_r(t)\leq f_r(0)= r.
\end{align*}
This implies that $\Phi_r(z)\in A_r$ and hence  $\Phi_r(K)\subset A_r$.
Conversely, let $y\in A_r$. Then $d_K(y)\le r<r_0<\eta.$
Thus, if $y\notin U_\eta$, then $|d_K(y)|\ge\eta$ and hence
$d_K(y)\le -\eta<0$. By this, we have $y=\Phi_r(y)\in\Phi_r(K)$.
Now,  assume that $y\in A_r\cap U_\eta$. Write $y=x+s\nu_K(x)$
with $x\in\partial K$ and $s\in(-\eta,\eta)$. Then $s\le r=f_r(0)$.
Using this and the fact that $f_r$ is increasing, we conclude that there exists a
unique number $t:=f_r^{-1}(s)$ satisfies $t\le 0$. Thus, $z:=x+t\nu_K(x)\in K$, and
\begin{align*}
\Phi_r(z)=x+f_r(t)\nu_K(x)=x+s\nu_K(x)=y.
\end{align*}
This proves $A_r\subset\Phi_r(K)$, which completes the proof of (iv).

Finally, from definitions of $\Phi_r$ and $\chi,$ we infer that, for any  $x\in\partial K$,
\begin{align*}
\Phi_r(x)=x+f_r(0)\nu_K(x)=x+r\nu_K(x).
\end{align*}
Consequently, for any  $x\in\partial K$,
\begin{align*}
\left.\frac{\partial}{\partial r}\Phi_r(x)\right|_{r=0}=\nu_K(x).
\end{align*}
This
shows (v), which then completes  the proof of Lemma \ref{lem:para}.
\end{proof}

Now, we prove Proposition \ref{prop:dcomp}.

\begin{proof}[Proof of Proposition \ref{prop:dcomp}]
Using Proposition \ref{prop:slicing11} and Lemmas \ref{lem:first}, \ref{lem:para}, and \ref{lem:Hbdi},
we conclude that, for any $q\in (0,1),$
\begin{align*}
\left.\frac{d}{dr}\right|_{r=0}C_q(K+r\overline{B}_1)
&=q(1-q)\left.\frac{d}{dr}\right|_{r=0}P_{1-q}(K+r\overline{B}_1)\\
&=q(1-q)\int_{\partial K}H_{1-q}^{\operatorname{int} K}(x)\,d\mathcal{H}^1(x)
=2qW_{1-q}(K).
\end{align*}
Finally, \eqref{eq:x56} follows from \eqref{eq:x56x},  \eqref{eq:kshensuo}, and Proposition \ref{prop:FW}.
This finishes the proof of Proposition \ref{prop:dcomp}.
\end{proof}

\subsection{Asymptotics along Outer Parallel Bodies}\label{sec:limit}
In this subsection, we show the following result.
\begin{proposition}\label{prop:lim}
 Let $K$ be a $C^2_+$ planar convex body with $\operatorname{Per}(K)=2\pi,$ and let $q\in(0,1)$.  Then
\begin{align}\label{eq:lim}
C_q(K+rB_1)-C_q(B_{r+1})=O(r^{q-1})\qquad\text{as }r\to\infty.
\end{align}
In particular,
\begin{align*}
\lim_{r\to\infty}\left[C_q(K+rB_1)-C_q(B_{r+1})\right]=0.
\end{align*}
\end{proposition}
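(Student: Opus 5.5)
The idea is to write $K_r:=K+rB_1$ through its support function and expand the chord lengths in powers of $1/r$. The first-order correction will turn out to depend on $K$ only through $\operatorname{Per}(K)$, by Cauchy's formula, and the remainder will be shown to be $O(r^{q-1})$.

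\textbf{Setup and endpoint formula.} Let $h$ be the support function of $K$; since $K$ is $C^2_+$, $h$ is $C^2$ on $\mathbb{S}^1$ with radius of curvature $\rho=h+h''>0$. By Cauchy's formula, $\int_{\mathbb{S}^1}h\,d\sigma=\int_0^{2\pi}\rho\,d\theta=\operatorname{Per}(K)=2\pi$. By translation invariance of $C_q$ we may assume $\mathbf{0}\in\operatorname{int}K$, so that $h>0$.

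Fix $u\in\mathbb{S}^1$ and a unit vector $v\perp u$, and write $z=tv$. Since $K_r=\bigcup_{x\in K}\overline{B}(x,r)$ and every chord of the convex set $K_r$ is an interval, its length is
$$\ell_r(u,t)=\max_{x\in K,\,|t-x\cdot v|\le r}\Big[x\cdot u+\sqrt{r^2-(t-x\cdot v)^2}\Big]+\max_{x\in K,\,|t-x\cdot v|\le r}\Big[-x\cdot u+\sqrt{r^2-(t-x\cdot v)^2}\Big].$$
Now set $t=r\sin\varphi$ with $|\varphi|<\frac\pi2$ and let $n_\pm(\varphi):=\pm\cos\varphi\,u+\sin\varphi\,v$. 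For $\cos\varphi\gg r^{-1/2}$, a Taylor expansion of the square root gives
$$\ell_r=2r\cos\varphi+h(n_+(\varphi))+h(n_-(\varphi))+E,\qquad |E|\lesssim \frac{1}{r\cos^3\varphi}.$$
The implied constant depends only on $\operatorname{diam}K$.

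\textbf{Bulk region.} Expand $\ell_r^q$ around $(2r\cos\varphi)^q$ and integrate in $dt=r\cos\varphi\,d\varphi$.

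\begin{itemize}
\item[(a)] The zeroth-order term is exactly the contribution of $rB_1$.
\item[(b)] The first-order term is $q2^{q-1}r^q\int\cos^q\varphi\,[h(n_+)+h(n_-)]\,d\varphi$. Averaging over $u\in\mathbb{S}^1$ (the map $(u,\varphi)\mapsto n_\pm$ is measure-preserving in $u$), this becomes $\frac{1}{2\pi}\int_{\mathbb{S}^1}h\,d\sigma=1$ times the corresponding disk integral. Hence it coincides with the first-order term of $B_{r+1}$, whose support function is $h\equiv1$ relative to $rB_1$.
\item[(c)] The second-order and error terms are bounded by $r^{q-2}\cdot r\int\cos^{q-1}\varphi\,d\varphi$ and by $r^{q-1}\cdot r^{-1}\int\cos^{q-3}\varphi\,d\varphi$ over $\cos\varphi\gtrsim r^{-1/2}$. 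Both are $O(r^{q-1})$ for $K$ and for $B_{r+1}$, since the cut-off exponent $r^{-1/2}$ keeps the second integral at the right order.
\end{itemize}

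\textbf{Boundary layer (main obstacle).} The remaining region is $\cos\varphi\lesssim r^{-1/2}$. Here the chords have length $O(\sqrt r)$ and the region has width $O(1)$ in $t$, so the crude bound is $O(r^{q/2})$. This is too large, and a genuine cancellation against the disk must be exhibited.

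I would parametrize this region by the normal angle $\theta$ of the supporting line parallel to $u$. Near that line, $\partial K_r$ is the parallel curve $x(\theta)+r\nu(\theta)$, whose radius of curvature is $r+\rho(\theta)$. I will therefore compare the cap of $K_r$ cut by the line at distance $\delta$ from the support line with the cap of a disk of radius $r+\rho(\theta)$. Expanding the boundary to second order in arclength, this comparison should give cap chord $=2\sqrt{2(r+\rho)\delta}\,(1+O(\sqrt{\delta/r}))$. Integrating $\ell^q$ over the cap then gives $c_q(r+\rho(\theta))^{q/2}\delta^{1+q/2}$ plus a lower-order error.

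The contributions linear in $\rho$ integrate, over both endpoints and all $u$, to a multiple of $\int\rho\,d\theta=2\pi$. This matches the disk $B_{r+1}$, where $\rho\equiv1$. The leftover terms should be $O(r^{q-1})$ after the layer boundary $\cos\varphi\sim r^{-1/2}$ is matched with the bulk expansion. If a direct matching is awkward, I would instead choose the cut-off $\cos\varphi\sim r^{-\beta}$ for a suitable $\beta$, balancing the two error estimates.

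Adding the three regions gives $C_q(K+rB_1)-C_q(B_{r+1})=O(r^{q-1})$, and the limit statement follows because $q<1$.
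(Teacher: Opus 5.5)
There is a genuine gap: the bulk estimate (c) is false, so the cancellation the proof needs is not the one you describe for the boundary layer.

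\textbf{The bulk expansion and its error.} First, with $t=r\sin\varphi$ the first-order correction to the chord is $[h(n_+)+h(n_-)]/\cos\varphi$, not $h(n_+)+h(n_-)$. Test this with $K=\overline{B}_a$: the half-chord is $\sqrt{(r+a)^2-r^2\sin^2\varphi}=r\cos\varphi+a/\cos\varphi+\cdots$. Your averaging step (b) survives this correction, since the $u$-average of $h(n_\pm(\varphi))$ is still $1$. The real problem is the higher-order terms. After the Jacobian $dt=r\cos\varphi\,d\varphi$, the second-order term and the $O(1/(r\cos^3\varphi))$ remainder each contribute an integrand of size $r^{q-1}\cos^{q-3}\varphi$; your bound for the remainder lost a factor $r$. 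So the bulk error over $\{\cos\varphi\ge c\}$ is of order $r^{q-1}c^{q-2}$.
\begin{itemize}
\item At $c=r^{-1/2}$ this is $r^{q/2}$, not $O(r^{q-1})$. There the expansion parameter $1/(r\cos^2\varphi)$ is of order one, and every order of the expansion contributes $r^{q/2}$.
\item At $c=r^{-\beta}$ it is $r^{q-1+\beta(2-q)}$, so no choice of cut-off ``balances'' the errors.
\item Nor does this tail cancel against the disk. Near $\cos\varphi=0$ its coefficient is $q(q-2)2^{q-1}r^{q-1}h(\pm v)^2\cos^{q-3}\varphi$. Its $u$-average involves $\frac1{2\pi}\int_{\mathbb{S}^1}h^2\,d\sigma$, which exceeds the disk value $1$ by $\frac1{2\pi}\int_{\mathbb{S}^1}(h-1)^2\,d\sigma>0$ unless $K=\overline{B}_1$.
\end{itemize}

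\textbf{What actually has to cancel, and how the paper avoids it.} The compensating term does live in the boundary layer, but it is not the $\rho$-dependence of the caps. At your cut-off those $\rho$-linear corrections are only $O(r^{q/2-1})$ per direction, already $o(r^{q-1})$. The relevant effect is the offset of the supporting lines: $K_r$ reaches $t=r+h(v)$, while $B_{r+1}$ reaches only $t=r+1$.

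Take a depth cut-off $A\approx rc^2/2$ and let $F$ be the integral of $\ell^q$ over depth, so that $F'(\delta)\approx(8r\delta)^{q/2}$. The layer difference is then $F(A+h(v))-F(A+1)$. Its $v$-average is $\frac12F''(A)\big(\frac1{2\pi}\int_{\mathbb{S}^1}h^2\,d\sigma-1\big)+\cdots$, of size $r^{q/2}A^{q/2-1}$. This exactly cancels the leading bulk tail above. So what must be proved is a matched expansion on an overlap scale $1\ll A\ll r$, carried to enough orders on both sides that all $A$-dependent terms cancel and the remainder is $O(r^{q-1})$. That is the entire difficulty, and your proposal only asserts that it ``should'' work.

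The paper never meets this singularity. It rescales to $(K+rB_1)/(1+r)$, whose support function is $1+\varepsilon\varphi$ with $\varepsilon=(1+r)^{-1}$ and $\int_0^{2\pi}\varphi\,d\theta=0$. It then writes $C_q$ in the boundary-pair parametrization
$$\int_0^{2\pi}\int_0^{2\pi}\ell_\varepsilon^{q-3}a_\varepsilon b_\varepsilon\rho_\varepsilon(\theta)\rho_\varepsilon(\eta)\,d\theta\,d\eta.$$
There the integrand and its first two $\varepsilon$-derivatives are dominated by $|\delta|^{q+1}$, so $\varepsilon\mapsto C_q$ is $C^2$. The first variation is a rotation-invariant bounded linear functional, which is the rigorous form of your step (b), hence proportional to $\int\varphi=0$. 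The resulting $O(\varepsilon^2)$ remainder rescales to $O(r^{q-1})$. In that parametrization the tangent-line singularity that forces your boundary layer never appears.
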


To prove Proposition \ref{prop:lim}, we recall some  basic facts.
Let $K\subset\mathbb{R}^2$ be a planar convex body. The \emph{support function} $h_K$ is defined by setting,
for any $v\in\mathbb{S}^1$, $$h_K(v):=\sup_{x\in K} x\cdot v.$$
Let $v\in\mathbb{S}^1$. Define
\begin{align*}
\ell_K(v):=\{z\in\mathbb{R}^2:z\cdot v=h_K(v)\}
\ \ \text{and}\ \
F_K(v):=\{z\in K:z\cdot v=h_K(v)\}.
\end{align*}
Since $K$ is a planar convex body, it follows that $F_K(v)\neq\emptyset,$
\begin{align}\label{eq:yuanchuang}
K\subset\{z\in\mathbb{R}^2:z\cdot v\le h_K(v)\},
\ \ \text{and}\ \   K=\bigcap_{v\in\mathbb{S}^1}
\{z\in\mathbb{R}^2:z\cdot v\le h_K(v)\}.
\end{align}
Moreover, $\ell_K(v)$ is called the \emph{supporting line of $K$ with
outward normal $v$} and  $F_K(v)$ is called the \emph{exposed contact
set of $K$ in direction $v$}.
Recall that, for any given
nonempty compact sets $A,B\subset\mathbb{R}^2,$
their \emph{Hausdorff distance} $d_H(A,B)$ is defined by setting
\begin{align*}
d_H(A,B):=\max\left\{
\sup_{a\in A}\operatorname{dist}(a,B),
\sup_{b\in B}\operatorname{dist}(b,A)\right\}.
\end{align*}
The following standard facts on support functions and the Hausdorff distance
can be found in  \cite[Theorems 1.3.2 and  1.7.5 and Lemma  1.8.14]{s14}.
\begin{lemma}\label{lem:suppline}
Let $K,L\subset\mathbb{R}^2$ be planar convex bodies. Then the following statements  hold.
\begin{itemize}
\item  [$\mathrm{(i)}$]For any given $y\in\partial K,$ there exists $v\in\mathbb{S}^1$ such that
$y\in F_K(v).$
\item [$\mathrm{(ii)}$]For any $\lambda\in (0,\infty)$, $h_{K+L}=h_K+h_L$ and $h_{\lambda K}=\lambda h_{K}.$
\item  [$\mathrm{(iii)}$]
$d_H(K,L)=\sup_{v\in\mathbb{S}^1}|h_K(v)-h_L(v)|.$
\end{itemize}
\end{lemma}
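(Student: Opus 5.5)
The three assertions in Lemma \ref{lem:suppline} are standard, and I would prove them in order: (i) by a nearest-point argument, (ii) directly from the definition of $h_K$, and (iii) by translating inclusions of convex bodies into inequalities between support functions.

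\textbf{Step 1: assertion (i).} Fix $y\in\partial K$ and pick $y_n\notin K$ with $y_n\to y$. Since $K$ is compact and convex, each $y_n$ has a unique nearest point $p_n\in K$. The variational inequality for the metric projection gives $(x-p_n)\cdot(y_n-p_n)\le 0$ for all $x\in K$. Put $v_n:=(y_n-p_n)/|y_n-p_n|\in\mathbb S^1$, so that $x\cdot v_n\le p_n\cdot v_n$ for all $x\in K$. We have $|p_n-y|\le |p_n-y_n|+|y_n-y|\le 2|y_n-y|\to0$. By compactness of $\mathbb S^1$, a subsequence satisfies $v_n\to v$, and passing to the limit gives $x\cdot v\le y\cdot v$ for all $x\in K$. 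Hence $y\cdot v=h_K(v)$, that is, $y\in F_K(v)$.

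\textbf{Step 2: assertion (ii).} For $v\in\mathbb S^1$,
$$h_{K+L}(v)=\sup_{x\in K,\,y\in L}(x+y)\cdot v=\sup_{x\in K}x\cdot v+\sup_{y\in L}y\cdot v,$$
because the two suprema are taken independently. Similarly, $h_{\lambda K}(v)=\sup_{x\in K}\lambda x\cdot v=\lambda h_K(v)$ for $\lambda>0$.

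\textbf{Step 3: assertion (iii).} I would first establish the inclusion criterion: for convex bodies $K$ and $M$, $K\subset M$ if and only if $h_K\le h_M$ on $\mathbb S^1$. Necessity is immediate from the definition of $h$. Sufficiency follows from the representation $M=\bigcap_v\{z:\ z\cdot v\le h_M(v)\}$ in \eqref{eq:yuanchuang}: if $h_K\le h_M$, then every $x\in K$ satisfies $x\cdot v\le h_K(v)\le h_M(v)$ for every $v$, so $x\in M$.

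Next, for $\lambda\ge0$ I would note that
$$\sup_{a\in K}\operatorname{dist}(a,L)\le\lambda \iff K\subset L+\lambda\overline B_1,$$
using that $L$ is compact, so distances to $L$ are attained. By (ii) and $h_{\overline B_1}\equiv1$ we have $h_{L+\lambda\overline B_1}=h_L+\lambda$. Combining with the inclusion criterion, $K\subset L+\lambda\overline B_1$ if and only if $h_K-h_L\le\lambda$ on $\mathbb S^1$. Exchanging the roles of $K$ and $L$, we get $d_H(K,L)\le\lambda$ if and only if $\sup_v|h_K(v)-h_L(v)|\le\lambda$. Since this holds for every $\lambda\ge0$, the two quantities are equal.

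The step needing the most care is the inclusion criterion in Step 3, since it is where the separation property of closed convex sets enters. Here that property is supplied by \eqref{eq:yuanchuang}, which is itself a consequence of the nearest-point separation used in Step 1.
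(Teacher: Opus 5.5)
Your proof is correct. The metric-projection argument for (i), the direct computation for (ii), and the reduction of (iii) to the criterion $K\subset L+\lambda\overline B_1 \iff h_K\le h_L+\lambda$ (via \eqref{eq:yuanchuang} and $h_{\overline B_1}\equiv 1$) are all sound. The paper gives no proof of its own and simply cites \cite[Theorems 1.3.2 and 1.7.5 and Lemma 1.8.14]{s14}; your argument is essentially the standard one found there, so it makes the lemma self-contained.
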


\begin{lemma}\label{lem:nangle}
Let $k\in\{2,3,\ldots\}\cup\{\infty\}$. For any $\theta\in\mathbb R$, let
\begin{align*}
n_\theta:=(\cos\theta,\sin\theta)\quad\text{and}\quad \tau_\theta:=(-\sin\theta,\cos\theta).
\end{align*}
For any planar convex body $K$ and $\theta\in\mathbb{R}$, let
$\widetilde h_K(\theta):=h_K(n_\theta).$
Then the following assertions hold.

\begin{itemize}
\item[$\mathrm{(i)}$] Let $K$ be a $C^k_+$ planar convex body. Then
$\widetilde h_K\in C^k(\mathbb R)$. Moreover, for any given
$\theta\in\mathbb R$, there exists a unique point
$x_K(\theta)\in\partial K$ whose outward unit normal is $n_\theta$. Furthermore,
the map $x_K:\mathbb R\to\partial K$ is $C^{k-1}$ and, for any $\theta\in\mathbb{R},$
\begin{align*}
x_K(\theta)=\widetilde h_K(\theta)n_\theta
+\widetilde h_K'(\theta)\tau_\theta\ \ \text{and}\ \ x_K'(\theta)
=\rho_K(\theta)\tau_\theta,
  \end{align*}
where $\rho_K(\theta):=\widetilde h_K(\theta)+\widetilde h_K''(\theta)>0.$
Consequently,
\begin{align*}
\operatorname{Per}(K)=\int_0^{2\pi}\rho_K(\theta)\,d\theta
=\int_0^{2\pi}\widetilde h_K(\theta)\,d\theta .
\end{align*}

\item[$\mathrm{(ii)}$] Conversely, let $h\in C^k(\mathbb R)$ be
$2\pi$-periodic and assume that, for any $\theta\in\mathbb{R},$
\begin{align*}
\rho(\theta):=h(\theta)+h''(\theta)>0.
\end{align*}
For any $\theta\in\mathbb{R},$ define
\begin{align*}
x_h(\theta):=h(\theta)n_\theta+h'(\theta)\tau_\theta.
\end{align*}
Then $x_h$ is injective on $\mathbb{T}_{2\pi}$ and there
exists a $C^k_+$ planar convex body $K_h$ such that
$\partial K_h=x_h(\mathbb R)$
and, for any $\theta\in\mathbb R$, $h_{K_h}(n_\theta)=h(\theta).$
\end{itemize}
\end{lemma}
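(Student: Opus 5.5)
For (i) I would pass from the arclength parametrization to the normal angle via the Gauss map. For (ii) I would run the construction backwards and identify $K_h$ as an intersection of half-planes.

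\textbf{Part (i): the normal angle as a parameter.} Let $K$ be a $C^k_+$ body, $L:=\operatorname{Per}(K)$, and $\gamma:\mathbb T_L\to\partial K$ a positively oriented arclength parametrization. By Lemma \ref{lem:angC}(i), $\gamma'=(\cos\theta,\sin\theta)$ with $\theta'=\kappa_\gamma>0$ and $\theta(s+L)=\theta(s)+2\pi$. The outward normal is then $\nu(s)=n_{\varphi(s)}$ with $\varphi:=\theta-\pi/2$. Hence $\varphi\in C^{k-1}$, it is strictly increasing, and it is a $C^{k-1}$ diffeomorphism of $\mathbb R$ satisfying $\varphi(s+L)=\varphi(s)+2\pi$.

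\textbf{Part (i): the map $x_K$, support function and regularity.}
\begin{itemize}
\item Set $x_K:=\gamma\circ\varphi^{-1}\in C^{k-1}$. Uniqueness of the point with normal $n_\theta$ follows because $\varphi$ is injective modulo the periods.
\item By the supporting line property \eqref{eq:x01}, the supremum defining $h_K(n_\theta)$ is attained at the point with outward normal $n_\theta$. Thus $\widetilde h_K(\theta)=x_K(\theta)\cdot n_\theta$.
\item Since $x_K'$ is tangent, it is orthogonal to $n_\theta$. Differentiating gives $\widetilde h_K'(\theta)=x_K(\theta)\cdot\tau_\theta$, which yields $x_K=\widetilde h_K n_\theta+\widetilde h_K'\tau_\theta$.
\item Regularity is bootstrapped: both $\widetilde h_K=x_K\cdot n_\theta$ and $\widetilde h_K'=x_K\cdot\tau_\theta$ are $C^{k-1}$, so $\widetilde h_K\in C^k$.
\item Differentiating the representation gives $x_K'=(\widetilde h_K+\widetilde h_K'')\tau_\theta$. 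The chain rule gives $x_K'=\gamma'(\varphi^{-1}(\theta))/\kappa_\gamma=\tau_\theta/\kappa_\gamma$, so $\rho_K=1/\kappa_\gamma>0$.
\item For the perimeter, $\operatorname{Per}(K)=\int_0^{2\pi}|x_K'|\,d\theta=\int_0^{2\pi}\rho_K\,d\theta$, and $\int_0^{2\pi}\widetilde h_K''\,d\theta=0$ by periodicity.
\end{itemize}

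\textbf{Part (ii): the key monotonicity computation.} Fix $\phi$ and set $g(\theta):=x_h(\theta)\cdot n_\phi$. Then $x_h'=\rho\,\tau_\theta$, and
\[
g'(\theta)=\rho(\theta)\,\tau_\theta\cdot n_\phi=\rho(\theta)\sin(\phi-\theta).
\]
So $g$ increases on $(\phi-\pi,\phi)$ and decreases on $(\phi,\phi+\pi)$. Hence $g(\theta)\le g(\phi)=h(\phi)$, with equality only for $\theta\in\phi+2\pi\mathbb Z$.

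\textbf{Part (ii): the consequences, in order.}
\begin{enumerate}
\item Injectivity of $x_h$ on $\mathbb T_{2\pi}$. If $x_h(\theta_1)=x_h(\theta_2)$, then testing against $n_{\theta_1}$ forces $\theta_2\equiv\theta_1$.
\item Define $K_h:=\bigcap_\phi\{z: z\cdot n_\phi\le h(\phi)\}$. It is compact and convex, contains $x_h(\mathbb R)$, and so contains $\operatorname{conv}(x_h(\mathbb R))$.
\item The support function of $\operatorname{conv}(x_h(\mathbb R))$ equals $h$, since the maximum is attained at $x_h(\phi)$. The support function of $K_h$ is at most $h$. Hence both equal $h$, and since compact convex sets are determined by their support functions, $K_h=\operatorname{conv}(x_h(\mathbb R))$. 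This also gives $h_{K_h}(n_\theta)=h(\theta)$.
\item The interior is nonempty because $\rho>0$ prevents the curve from being a segment.
\item Identify the boundary. Each $x_h(\phi)$ lies on the supporting line $\ell_{K_h}(n_\phi)$, so $x_h(\mathbb R)\subset\partial K_h$. Conversely, by Lemma \ref{lem:suppline}(i), every boundary point lies in some $F_{K_h}(n_\phi)$. That set is the convex hull of the maximizers on the curve, which by the strict maximum is the single point $x_h(\phi)$.
\end{enumerate}

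\textbf{Part (ii): regularity of $\partial K_h$.} Reparametrize by arclength $s(\theta)=\int_0^\theta\rho$, which is $C^{k-1}$ with positive derivative. The unit tangent is then $\tau_{\theta(s)}$, which is $C^{k-1}$ in $s$, so the arclength curve is $C^k$. Its signed curvature is $d\theta/ds=1/\rho>0$, so $K_h$ is a $C^k_+$ body.

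The main obstacle is the regularity bookkeeping: a naive count gives only $x_h\in C^{k-1}$, and one must recover $C^k$ through the arclength reparametrization. The identification $\partial K_h=x_h(\mathbb R)$ is the other delicate point, and it rests entirely on the strict-maximum computation for $g$.
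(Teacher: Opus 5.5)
Your proposal is correct and follows the paper's proof essentially step for step: part (i) is the same Gauss-map reparametrization $x_K=\gamma\circ\varphi^{-1}$ with the same regularity bootstrap through $\widetilde h_K'=x_K\cdot\tau_\theta$, and part (ii) rests on the same strict-maximum computation for $x_h(\theta)\cdot n_\phi$, the same half-plane intersection $K_h$, Lemma \ref{lem:suppline}(i) for $\partial K_h\subset x_h(\mathbb R)$, and the same arclength reparametrization. The only deviations are minor. First, you identify $K_h=\operatorname{conv}(x_h(\mathbb R))$ and read off $F_{K_h}(n_\phi)=\{x_h(\phi)\}$ from the convex-hull structure, whereas the paper gets it from the first-order condition $h'(\phi)=y\cdot\tau_\phi$. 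Second, for nonempty interior you argue that the curve cannot lie in a line (implicitly because $x_h'=\rho\,\tau_\theta\neq 0$ takes every direction, which you should state), whereas the paper uses the centroid of the curve.
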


\begin{proof}
We first show $\mathrm{(i)}$. Let
$\gamma:\mathbb T_L\to\partial K$ be a positively oriented
$C^k$ arclength parametrization of $\partial K$, where
$L:=\operatorname{Per}(K)$.
By an argument similar to that used in the  proof of Lemma \ref{lem:angC}, we conclude that
there exists a $C^{k-1}$ function
$\varphi:\mathbb R\to\mathbb R$ such that, for any $s\in\mathbb R$,
\begin{align*}
\gamma'(s)=(\cos\varphi(s),\sin\varphi(s))
\ \ \text{and}\ \ \varphi'(s):=\det(\gamma'(s),\gamma''(s))>0.
\end{align*}
Moreover, for any $s\in\mathbb{R},$ $\varphi(s+L)=\varphi(s)+2\pi.$
For any $s\in\mathbb{R},$ define $\psi(s):=\varphi(s)-\frac{\pi}{2}.$
Since $\psi'=\varphi'>0$ and $\psi(s+L)=\psi(s)+2\pi$ for any $s\in\mathbb{R},$ it follows that
the map $\psi:\mathbb R\to\mathbb R$ is a $C^{k-1}$ diffeomorphism.
Let $s(\theta)$ denote its inverse. Then $s\in C^{k-1}(\mathbb R)$. For any $s\in\mathbb{R},$
define $x_K(\theta):=\gamma(s(\theta))$.
Then $x_K\in C^{k-1}(\mathbb R;\mathbb R^2)$ and the outward unit
normal of $K$ at $x_K(\theta)$ is $n_\theta$.

We prove that $x_K(\theta)$ is the unique boundary point whose outward
unit normal is $n_\theta$. Indeed, if $p\in\partial K$ also has outward
unit normal $n_\theta$, then $p=\gamma(s_0)$ for some $s_0\in\mathbb R$
and $n_{\psi(s_0)}=n_\theta.$
Thus, for some $m\in\mathbb Z$, $\psi(s_0)=\theta+2\pi m.$
Thus, $s_0=s(\theta)+mL$ and hence $p=\gamma(s_0)=\gamma(s(\theta))=x_K(\theta).$
This shows the uniqueness.

By the support line property  \eqref{eq:x01}, we find that,  for any $\theta\in\mathbb R$
and $y\in K$, $(y-x_K(\theta))\cdot n_\theta\le 0.$
This implies that,  for any $\theta\in\mathbb R$,
\begin{align}\label{eq:zoukaiu}
\widetilde h_K(\theta)=h_K(n_\theta)=x_K(\theta)\cdot n_\theta .
\end{align}
Differentiating \eqref{eq:zoukaiu}, we obtain, for any $\theta\in\mathbb R$,
\begin{align}\label{eq:zoukaia}
\widetilde h_K'(\theta)=x_K'(\theta)\cdot n_\theta+x_K(\theta)\cdot \tau_\theta
=x_K(\theta)\cdot \tau_\theta .
\end{align}
From this and the fact that $x_K$ is $C^{k-1}$,  we deduce that $\widetilde{h}'_K$ is $C^{k-1}$
and hence $\widetilde{h}_K$ is $C^{k}.$ Using \eqref{eq:zoukaia} and \eqref{eq:zoukaiu},
we conclude that,  for any $\theta\in\mathbb R$,
\begin{align*}
x_K(\theta)=\bigl[x_K(\theta)\cdot n_\theta\bigr]n_\theta+
\bigl[x_K(\theta)\cdot \tau_\theta\bigr]\tau_\theta
=\widetilde h_K(\theta)n_\theta+\widetilde h_K'(\theta)\tau_\theta.
\end{align*}
By the definition of $x_K$, we have,  for any $\theta\in\mathbb R$,
\begin{align*}
x_K'(\theta)=s'(\theta)\gamma'(s(\theta))=
s'(\theta)\tau_\theta=:\rho_{K}(\theta)\tau_\theta.
\end{align*}
Since $\psi'>0$, it follows that, for any $\theta\in\mathbb{R},$
\begin{align*}
\rho_K(\theta)=s'(\theta)=\frac{1}{\psi'(s(\theta))}>0.
\end{align*}
Differentiating \eqref{eq:zoukaia}, we find that, for any $\theta\in\mathbb{R},$
\begin{align*}
\widetilde h_K''(\theta)=x_K'(\theta)\cdot\tau_\theta+
x_K(\theta)\cdot\tau_\theta'=\rho_K(\theta)-x_K(\theta)\cdot n_\theta
=\rho_K(\theta)-\widetilde h_K(\theta).
\end{align*}
Thus,
\begin{align*}
\rho_K(\theta)=\widetilde h_K(\theta)+\widetilde h_K''(\theta)>0.
\end{align*}
Finally, from the fact that $\theta\mapsto x_K(\theta)$ parametrizes $\partial K$
once and $|x_K'(\theta)|=\rho_K(\theta)$, we infer that
\begin{align*}
\operatorname{Per}(K)=\int_0^{2\pi}|x_K'(\theta)|\,d\theta=\int_0^{2\pi}\rho_K(\theta)\,d\theta
=\int_0^{2\pi}\widetilde{h}_K(\theta)\,d\theta.
\end{align*}
This finishes the proof of (i).

Now, we prove (ii).
Fix $\theta\in\mathbb{R}$. For $t\in[0,2\pi]$, let
\begin{align*}
D(t):=h(\theta)-x(\theta+t)\cdot n_\theta.
\end{align*}
Then $D(0)=D(2\pi)=0$ and
\begin{align*}
D'(t)=-\rho(\theta+t)\tau_{\theta+t}\cdot n_\theta
=\rho(\theta+t)\sin t.
\end{align*}
Thus, $D$ is strictly increasing on $(0,\pi)$ and strictly decreasing on
$(\pi,2\pi)$. This implies that, for any  $\theta\in\mathbb{R}$ and $t\in [0,2\pi]$
\begin{align}\label{eq:xhzhichi}
x(\theta+t)\cdot n_\theta\le h(\theta)
\end{align}
and the equality holds if and only if $t=0$ or $t=2\pi.$ By this, we conclude that
$x$ is injective on $\mathbb{T}_{2\pi}$.

Let
\begin{align*}
K_h:=\bigcap_{\theta\in\mathbb{R}}\{y\in\mathbb{R}^2:\ y\cdot n_\theta\le h(\theta)\}.
\end{align*}
It is easy  to verify that  $K_h$ is closed, convex, and bounded. From \eqref{eq:xhzhichi},
we deduce that $x(\mathbb{R})\subset K_h$ and  hence $K_h$ is nonempty. Let
\begin{align*}
\widetilde{x}:=\frac{1}{2\pi}\int_0^{2\pi}x(\eta)\,d\eta.
\end{align*}
Using \eqref{eq:xhzhichi}, we obtain that, for any $\theta\in\mathbb{R}$,
\begin{align*}
h(\theta)-\widetilde{x}\cdot n_\theta=\frac{1}{2\pi}\int_0^{2\pi}
\left[h(\theta)-x(\eta)\cdot n_\theta\right]\,d\eta>0.
\end{align*}
Since $h$ is continuous and $2\pi$-periodic,  it follows that
there exists $m\in(0,\infty)$ such that, for any $\theta\in\mathbb{R}, $
\begin{align*}
h(\theta)-\widetilde{x}\cdot n_\theta\ge m.
\end{align*}
This implies that $B(\widetilde{x},\frac{m}{2})\subset K_h$. Consequently, $K_h$ is a
planar convex body.

Let $\theta\in\mathbb{R}$. Using  \eqref{eq:xhzhichi},we find that
$x(\theta)\in K_h$ and $x(\theta)\cdot n_\theta=h(\theta)$.
This, combined with the fact that
$K_h\subset\{y\in\mathbb{R}^2:\ y\cdot n_\theta\le h(\theta)\}$, implies that
$x(\theta)\in\partial K_h$ and
\begin{align*}
h_{K_h}(n_\theta):=\max_{y\in K}y\cdot n_\theta=h(\theta).
\end{align*}
We claim that
\begin{align}\label{eq:Fsingleton}
F_{K_h}(n_\theta):=\left\{y\in\mathbb{R}^2:\ y\cdot n_\theta=h_{K_h}(n_\theta)\right\}=\{x(\theta)\}.
\end{align}
Indeed, let $y\in F_{K_h}(n_\theta)$. Then $y\cdot n_\theta=h(\theta)$.
By the definition of $K_h$, we have, for any $t\in\mathbb{R},$
\begin{align*}
h(\theta+t)-y\cdot n_{\theta+t}\ge0.
\end{align*}
Since the left-hand side has a minimum at $t=0$, we deduce that
\begin{align*}
h'(\theta)-y\cdot\tau_\theta=0.
\end{align*}
This implies
\begin{align*}
y=(y\cdot n_\theta) n_\theta+(y\cdot \tau_\theta) \tau_\theta
=h(\theta)n_\theta+h'(\theta)\tau_\theta=x(\theta).
\end{align*}
This shows \eqref{eq:Fsingleton}. As proved above, we find that
$x(\mathbb{R})\subset\partial K_h$. Conversely, if $y\in\partial K_h$, using
Lemma \ref{lem:suppline}(i),we conclude that there exists a vector $v\in\mathbb{S}^1$ such that
$y\in F_{K_h}(v)$. Choosing $\theta\in\mathbb{R}$ with $v=n_\theta$ and
using \eqref{eq:Fsingleton}, we obtain $y=x(\theta)$. Thus,
$\partial K_h\subset x(\mathbb{R}).$ This shows $\partial K_h= x(\mathbb{R}).$

Finally, we prove that $\partial K_h= x(\mathbb{R})$ admits a positively oriented $C^k$ arclength
parametrization with positive signed curvature.
Let $L:=\int_0^{2\pi}\rho(t)\,dt$ and, for any $\theta\in [0,2\pi],$
define
$$S(\theta):=\int_0^\theta \rho(t)\,dt.$$
Since $\rho>0$, it follows that $S$ is strictly increasing on $[0,2\pi].$
Let $\Theta$ denote the inverse of $S$  and, for any $s\in [0,2\pi],$ let $\Gamma(s):=x(\Theta(s)).$
Then we have, for any $s\in [0,2\pi],$ $\Theta'(s)=\frac{1}{\rho(\Theta(s))}$ and
\begin{align*}
\Gamma'(s)=x'(\Theta(s))\Theta'(s)=\rho(\Theta(s))\tau_{\Theta(s)}
\frac{1}{\rho(\Theta(s))}=\tau_{\Theta(s)}.
\end{align*}
Thus, $|\Gamma'(s)|=1$ and hence $\Gamma$ is an arclength parametrization of
$\partial K_h$. From the assumption that $\Theta$ is $C^{k-1}$, we infer that $\Gamma'$
is also $C^{k-1}$ and hence $\Gamma$ is $C^k.$ Moreover,
\begin{align*}
\Gamma''(s)=\frac{d}{ds}\tau_{\Theta(s)}=-n_{\Theta(s)}\Theta'(s)
=-\frac{1}{\rho(\Theta(s))}n_{\Theta(s)}
\end{align*}
and
\begin{align*}
\det(\Gamma'(s),\Gamma''(s))=\det\left(\tau_{\Theta(s)},
-\frac{1}{\rho(\Theta(s))}n_{\Theta(s)}\right)
=\frac{1}{\rho(\Theta(s))}>0.
\end{align*}
This shows that $\partial K_h$ admits a positively oriented $C^k$ arclength
parametrization with positive signed curvature, which completes the proof of Lemma \ref{lem:nangle}.
\end{proof}

Now, we prove Proposition \ref{prop:lim}.
\begin{proof}[Proof of Proposition \ref{prop:lim}]
Let $\psi\in C^2(\mathbb R)$ be $2\pi$-periodic and choose
$\varepsilon_\psi\in(0,\infty)$ such that, for any
$\varepsilon\in(-\varepsilon_\psi,\varepsilon_\psi)$ and  $\theta\in\mathbb R$,
\begin{align}\label{eq:rhoepszheng}
\frac12\le1+\varepsilon\left[\psi(\theta)+\psi''(\theta)\right]\le2.
\end{align}
For any $\varepsilon\in (-\varepsilon_\psi,\varepsilon_\psi)$ and $\theta\in\mathbb{R}$, define
\begin{align*}
h_\varepsilon(\theta):=1+\varepsilon\psi(\theta)\ \ \text{and}\ \
\rho_\varepsilon(\theta):=h_\varepsilon(\theta)+h_\varepsilon''(\theta)
=1+\varepsilon\left[\psi(\theta)+\psi''(\theta)\right].
\end{align*}
By Lemma \ref{lem:nangle}, we conclude that, for any
given $\varepsilon\in (-\varepsilon_\psi,\varepsilon_\psi)$, there exists
a $C^2_+$ planar convex body $K_{\varepsilon,\psi}$
whose support function is $h_{\varepsilon}.$
Moreover, if $x_\varepsilon(\theta)$ denotes the boundary point with outward unit
normal $n_\theta$, then
\begin{align}\label{eq:xedao}
x_\varepsilon(\theta)=h_\varepsilon(\theta)n_\theta+h_\varepsilon'(\theta)\tau_\theta
\ \ \text{and}\ \ x_\varepsilon'(\theta)=\rho_\varepsilon(\theta)\tau_\theta,
\end{align}
In particular, when $\varepsilon:=0$, then $K_{\varepsilon,\psi}=\overline{B}_1.$
For any $\theta,\eta\in \mathbb{R},$ let
\begin{align*}
r_\varepsilon(\theta,\eta):=x_\varepsilon(\eta)-x_\varepsilon(\theta)
\ \ \text{and}\ \ \ell_\varepsilon(\theta,\eta):=|r_\varepsilon(\theta,\eta)|.
\end{align*}
Moreover, let
\begin{align*}
a_\varepsilon(\theta,\eta):=-r_\varepsilon(\theta,\eta)\cdot n_\theta
\ \ \text{and}\ \
b_\varepsilon(\theta,\eta):=r_\varepsilon(\theta,\eta)\cdot n_\eta.
\end{align*}
Then, from the support line property \eqref{eq:x01}, we deduce that, for any $\theta,
\eta\in (0,2\pi)$ with $\theta\not=\eta,$
$a_\varepsilon(\theta,\eta)\geq 0$ and $b_\varepsilon(\theta,\eta)\geq 0$.

Now, we show that, for any $\varepsilon\in (-\varepsilon_\psi,\varepsilon_\psi)$,
\begin{align}\label{eq:bianjiedui}
C_q(K_{\varepsilon,\psi})=\int_0^{2\pi}\int_0^{2\pi}
\ell_\varepsilon(\theta,\eta)^{q-3} a_\varepsilon(\theta,\eta)b_\varepsilon(\theta,\eta)
\rho_\varepsilon(\theta)\rho_\varepsilon(\eta)\,d\theta\,d\eta.
\end{align}
For any $\beta\in(0,2\pi)$ and $\zeta\in\mathbb R$, let
$L_{\beta,\zeta}:=\zeta \tau_\beta+\mathbb R n_\beta$ and $\ell_{K_{\varepsilon,\psi}}(\beta,\zeta)
:=\mathcal H^1(K_{\varepsilon,\psi}\cap L_{\beta,\zeta}).$
Define $p_c:=x_\varepsilon(0)=x_\varepsilon(2\pi)$ and
\begin{align*}
V_\varepsilon:=\left\{(\beta,\zeta)\in(0,2\pi)\times\mathbb R:
L_{\beta,\zeta}\cap\operatorname{int}K_{\varepsilon,\psi}\ne\varnothing,
\ p_c\notin L_{\beta,\zeta}\right\}.
\end{align*}
Then
\begin{align*}
C_q(K_{\varepsilon,\psi})=\int_{V_\varepsilon}\ell_{K_{\varepsilon,\psi}}
(\beta,\zeta)^q\,d\zeta\,d\beta.
\end{align*}
Let $U_\varepsilon$ be the set of all $(\theta,\eta)\in(0,2\pi)^2$ such
that $\theta\ne\eta$ and the direction
$r_\varepsilon(\theta,\eta)/\ell_\varepsilon(\theta,\eta)$ is not $n_0$.
For any $(\theta,\eta)\in U_\varepsilon$, let
$\beta(\theta,\eta)\in(0,2\pi)$
be  such that
\begin{align*}
\frac{r_\varepsilon(\theta,\eta)}{\ell_\varepsilon(\theta,\eta)}=
n_{\beta(\theta,\eta)}.
\end{align*}
Define
\begin{align*}
\zeta(\theta,\eta):=
x_\varepsilon(\theta)\cdot \tau_{\beta(\theta,\eta)}\ \ \text{and}\ \ T_\varepsilon(\theta,\eta)
:=(\beta(\theta,\eta),\zeta(\theta,\eta)).
\end{align*}
Obviously, $T_\varepsilon:U_\varepsilon\to V_\varepsilon$ is a $C^1$ bijection.

We next compute the Jacobian of $T_\varepsilon$.  Instead of
differentiating $\beta(\theta,\eta)$ and $\zeta(\theta,\eta)$ directly, we
describe the graph of $T_\varepsilon$ by two scalar equations. Let
\begin{align*}
\operatorname{graph}(T_\varepsilon):=\left\{
(\theta,\eta,\beta,\zeta)\in U_\varepsilon\times V_\varepsilon:
(\beta,\zeta)=T_\varepsilon(\theta,\eta)\right\}.
\end{align*}
For any $(\theta,\eta,\beta,\zeta)\in \operatorname{graph}(T_\varepsilon),$ let
\begin{align*}
F_1(\theta,\eta,\beta,\zeta):=\left[x_\varepsilon(\eta)-x_\varepsilon(\theta)\right]\cdot \tau_\beta
\ \ \text{and}\ \
F_2(\theta,\eta,\beta,\zeta):=x_\varepsilon(\theta)\cdot \tau_\beta-\zeta.
\end{align*}
The equation $F_1=0$ means that the chord vector
$x_\varepsilon(\eta)-x_\varepsilon(\theta)$ is parallel to $n_\beta$,
whereas $F_2=0$ means that the point $x_\varepsilon(\theta)$ lies on the
line $L_{\beta,\zeta}$.
Thus, on the graph of $T_\varepsilon$, we
have $F_1=F_2\equiv0.$
This implies that
\begin{align}\label{eq:yakebi}
\left|\det \frac{\partial(\beta,\zeta)}{\partial(\theta,\eta)}\right|
=\frac{\left|\det \frac{\partial(F_1,F_2)}{\partial(\theta,\eta)}\right|}
{\left|\det \frac{\partial(F_1,F_2)}{\partial(\beta,\zeta)}\right|}.
\end{align}
By a direct computation, we obtain
\begin{align*}
\frac{\partial F_1}{\partial\beta}=
\left[x_\varepsilon(\eta)-x_\varepsilon(\theta)\right]\cdot\partial_\beta\tau_\beta
=\left[x_\varepsilon(\eta)-x_\varepsilon(\theta)\right]\cdot (-n_{\beta})=
-\ell_\varepsilon(\theta,\eta),
\end{align*}
$\frac{\partial F_1}{\partial\zeta}=0$,
$\frac{\partial F_2}{\partial\beta}=x_\varepsilon(\theta)\cdot (-n_{\beta}),$
and $\frac{\partial F_2}{\partial\zeta}=-1.$ Thus,
\begin{align}\label{eq:yakebi2}
\left|\det\frac{\partial(F_1,F_2)}{\partial(\beta,\zeta)}\right|=\ell_\varepsilon.
\end{align}
Moreover, $\frac{\partial F_1}{\partial\theta}=
-x_\varepsilon'(\theta)\cdot\tau_\beta$,
$\frac{\partial F_1}{\partial\eta}=x_\varepsilon'(\eta)\cdot\tau_\beta$,
$\frac{\partial F_2}{\partial\theta}= x_\varepsilon'(\theta)\cdot\tau_\beta,$
and $\frac{\partial F_2}{\partial\eta}=0.$ Thus,
\begin{align}\label{eq:yakebi3}
\left|\det\frac{\partial(F_1,F_2)}{\partial(\theta,\eta)}\right|
=|x_\varepsilon'(\theta)\cdot \tau_\beta||x_\varepsilon'(\eta)\cdot \tau_\beta|.
\end{align}
From the facts that $r_\varepsilon(\theta,\eta)=\ell_\varepsilon(\theta,\eta)n_\beta$
on $\operatorname{graph}(T_\varepsilon)$ and  $a_\varepsilon(\theta,\eta)\geq0$, we infer that
\begin{align*}
a_\varepsilon(\theta,\eta)=-r_\varepsilon(\theta,\eta)\cdot n_\theta
=-\ell_\varepsilon(\theta,\eta)n_\beta\cdot n_\theta
=\ell_\varepsilon(\theta,\eta)|n_\theta\cdot n_\beta|.
\end{align*}
This implies that
\begin{align}\label{eq:deng}
|x_\varepsilon'(\theta)\cdot \tau_\beta|=\rho_\varepsilon(\theta)|\tau_\theta\cdot\tau_\beta|
=\rho_\varepsilon(\theta)|n_\theta\cdot n_\beta|
=\rho_\varepsilon(\theta)\frac{a_\varepsilon(\theta,\eta)}
{\ell_\varepsilon(\theta,\eta)}.
\end{align}
Similarly, we also have
\begin{align*}
|x_\varepsilon'(\eta)\cdot \tau_\beta|=\rho_{\varepsilon}(\eta)|\tau_\eta\cdot \tau_\beta|
=\rho_{\varepsilon}(\eta)|n_\eta\cdot n_\beta|=
\rho_\varepsilon(\eta)\frac{b_\varepsilon(\theta,\eta)}
{\ell_\varepsilon(\theta,\eta)}.
\end{align*}
Using this, \eqref{eq:deng}, \eqref{eq:yakebi}, \eqref{eq:yakebi2}, and \eqref{eq:yakebi3},
we obtain
\begin{align*}
\left|\det \frac{\partial(\beta,\zeta)}{\partial(\theta,\eta)}\right|
=\frac{|x_\varepsilon'(\theta)\cdot \tau_\beta||x_\varepsilon'(\eta)\cdot \tau_\beta|}
{\ell_\varepsilon}
=\frac{a_\varepsilon(\theta,\eta)b_\varepsilon(\theta,\eta)
\rho_\varepsilon(\theta)\rho_\varepsilon(\eta)}
{\ell_\varepsilon(\theta,\eta)^3}>0.
\end{align*}
From this, the fact that $T_\varepsilon:U_\varepsilon\to V_\varepsilon$ is a $C^1$
bijection, and the inverse function theorem, it follows that
$T_\varepsilon:U_\varepsilon\to V_\varepsilon$ is a $C^1$
diffeomorphism. This, combined with  the change-of-variables formula, further implies
that \eqref{eq:bianjiedui} holds.

Next, we prove that $\varepsilon\mapsto C_q(K_{\varepsilon,\psi})$
is $C^2$ for $\varepsilon\in(-\varepsilon_\psi,\varepsilon_\psi).$
For any $\theta,\eta\in(0,2\pi)$ with $\theta\not=\eta$, let
\begin{align*}
P_{\varepsilon}(\theta,\eta):=a_\varepsilon(\theta,\eta)b_\varepsilon(\theta,\eta)
\rho_\varepsilon(\theta)\rho_\varepsilon(\eta)
\ \ \text{and}\ \
F_\varepsilon(\theta,\eta):=\ell_\varepsilon(\theta,\eta)^{q-3}
P_{\varepsilon}(\theta,\eta).
\end{align*}
Let $\varepsilon\in(-\varepsilon_\psi,\varepsilon_\psi).$
By \eqref{eq:bianjiedui} and the dominated convergence theorem,
we find that, to obtain the desired   conclusion, it suffices
to show that, for any  $\theta,\eta\in(0,2\pi)$ with $\theta\not=\eta$,
\begin{align}\label{eq:Fepskongzhi}
\sum_{j=0}^2|\partial_\varepsilon^jF_\varepsilon(\theta,\eta)|\lesssim
|\delta(\theta,\eta)|^{q+1}\mathbf 1_{\{|\delta(\theta,\eta)|\le1\}}
+\mathbf 1_{\{|\delta(\theta,\eta)|>1\}}.
\end{align}
Let $\theta,\eta\in(0,2\pi)$ with $\theta\not=\eta$.  Choose the
representative of $\eta-\theta$ modulo $2\pi$ lying in $(-\pi,\pi]$ and
denote it by $\delta=\delta(\theta,\eta)$. Then
\begin{align*}
\eta-\theta-\delta\in2\pi\mathbb Z\ \ \text{and}\ \ |\delta|
=\min_{k\in\mathbb Z}|\theta-\eta+2\pi k|.
\end{align*}
From \eqref{eq:xedao}, it follows that
\begin{align}\label{eq:repsjifen}
r_\varepsilon(\theta,\eta)
=x_\varepsilon(\theta+\delta)-x_\varepsilon(\theta)
=\int_0^\delta x'_{\varepsilon}(\theta+t)\,dt
=\int_0^\delta\rho_\varepsilon(\theta+t)\tau_{\theta+t}\,dt,
\end{align}
\begin{align}\label{eq:aepsjifen}
a_\varepsilon(\theta,\eta)=-\int_0^\delta
\rho_\varepsilon(\theta+t)\tau_{\theta+t}\cdot n_\theta\,dt
=\int_0^\delta\rho_\varepsilon(\theta+t)\sin t\,dt,
\end{align}
and
\begin{align}\label{eq:bepsjifen}
b_\varepsilon(\theta,\eta)=\int_0^\delta\rho_\varepsilon(\theta+t)\tau_{\theta+t}\cdot n_\eta\,dt
=\int_0^{\delta}\rho_{\varepsilon}(\theta+\delta-t)\sin t\,dt.
\end{align}
We first consider the case $0<|\delta(\theta,\eta)|\le 1.$
By \eqref{eq:repsjifen} and \eqref{eq:rhoepszheng}, we conclude that
\begin{align}\label{eq:ellshanglocal}
\ell_\varepsilon(\theta,\eta)=|r_\varepsilon(\theta,\eta)|
\leq\int_0^{|\delta(\theta,\eta)|} \rho_{\varepsilon}(\theta+t)\,dt\leq
2|\delta(\theta,\eta)|
\end{align}
and
\begin{align}\label{eq:ellxialocal}
\ell_\varepsilon(\theta,\eta)\geq
|r_\varepsilon(\theta,\eta)\cdot \tau_\theta|
&=\left|\int_0^{\delta(\theta,\eta)} \rho_{\varepsilon}(\theta+t)\cos t\,dt\right|\notag\\
&\geq\int_0^{|\delta(\theta,\eta)|}\frac{\cos t}{2}\,dt\geq
\frac{\cos 1 }{2}|\delta(\theta,\eta)|.
\end{align}
Moreover,
\begin{align*}
a_\varepsilon(\theta,\eta)\sim\int_0^{|\delta(\theta,\eta)|} \sin t\,dt
=1-\cos |\delta(\theta,\eta)|
\sim|\delta(\theta,\eta)|^2
\end{align*}
and $b_\varepsilon(\theta,\eta)\sim |\delta|^2.$
Using this, \eqref{eq:ellxialocal}, and \eqref{eq:ellshanglocal}, we find that
\begin{align}\label{eq:jibenguji}
\ell_\varepsilon(\theta,\eta)\sim |\delta(\theta,\eta)|\ \ \text{and}\ \ a_\varepsilon(\theta,\eta),
b_\varepsilon(\theta,\eta)\sim |\delta(\theta,\eta)|^2.
\end{align}
From the definition of $\rho_{\varepsilon}$ and \eqref{eq:rhoepszheng}, we deduce that
\begin{align*}
\sum_{j=0}^1|\partial_\varepsilon^j\rho_\varepsilon(\theta)|+
|\partial_\varepsilon^j\rho_\varepsilon(\eta)|\lesssim
1\ \ \text{and}\ \ \partial_\varepsilon^2\rho_\varepsilon(\theta)=\partial_\varepsilon^2
\rho_\varepsilon(\eta)=0.
\end{align*}
By this, \eqref{eq:aepsjifen}, and \eqref{eq:bepsjifen}, we conclude that
\begin{align*}
\sum_{j=0}^1|\partial_\varepsilon^j a_\varepsilon(\theta,\eta)|+
|\partial_\varepsilon^j b_\varepsilon(\theta,\eta)|\lesssim
|\delta(\theta,\eta)|^2
\ \ \text{and}\ \ \partial^2_{\varepsilon}a_\epsilon(\theta,\eta)
=\partial^2_{\varepsilon}b_\epsilon(\theta,\eta)=0.
\end{align*}
This implies that
\begin{align}\label{eq:dijiu}
\sum_{j=0}^2|\partial_\varepsilon^j P_\varepsilon(\theta,\eta)|
\lesssim|\delta(\theta,\eta)|^4.
\end{align}
On the other hand, from \eqref{eq:repsjifen} and \eqref{eq:jibenguji}, we infer that
\begin{align*}
\sum_{j=0}^1|\partial_\varepsilon^j r_\varepsilon(\theta,\eta)|\lesssim
|\delta(\theta,\eta)|,
\end{align*}
\begin{align*}
|\partial_\varepsilon\ell_\varepsilon(\theta,\eta)|=
\left|\frac{r_\varepsilon(\theta,\eta)\cdot\partial_\varepsilon r_\varepsilon(\theta,\eta)}
{\ell_\varepsilon(\theta,\eta)}\right|\lesssim
|\partial_\varepsilon r_\varepsilon(\theta,\eta)|\lesssim
|\delta(\theta,\eta)|,
\end{align*}
and
\begin{align*}
|\partial_\varepsilon^2\ell_\varepsilon(\theta,\eta)|
=\left|\frac{|\partial_\varepsilon r_\varepsilon(\theta,\eta)|^2}
{\ell_\varepsilon(\theta,\eta)}-
\frac{(r_\varepsilon(\theta,\eta)\cdot\partial_\varepsilon r_\varepsilon(\theta,\eta))^2}
{\ell_\varepsilon^3(\theta,\eta)}\right|\lesssim
|\delta(\theta,\eta)|.
\end{align*}
This, together with \eqref{eq:dijiu}, implies that, for any $\theta,\eta\in (0,2\pi)$
with $0<|\delta(\theta,\eta)|\leq 1,$
\begin{align}\label{eq:wushidu}
\sum_{j=0}^2 |\partial_\varepsilon^jF_\varepsilon(\theta,\eta)|\lesssim
|\delta(\theta,\eta)|^{q+1}.
\end{align}
Assume that $\theta,\eta\in (0,2\pi)$ with $|\delta(\theta,\eta)|\geq 1.$
Recall that, for any $\beta\in\mathbb{R},$
\begin{align*}
x_\varepsilon(\beta)= n_\beta+\varepsilon\psi(\beta)n_\beta
+\varepsilon\psi'(\beta)\tau_\beta.
\end{align*}
Thus,
\begin{align*}
\ell_0(\theta,\eta)&=|x_0(\eta)-x_0(\theta)|
=|n_\eta-n_\theta|=|n_{\theta+\delta(\theta,\eta)}-n_\theta|\\
&=2\left|\sin\frac{\delta(\theta,\eta)}{2}\right|\geq
2\sin\frac12=:c_0.
\end{align*}
Moreover, we have
\begin{align*}
|x_\varepsilon(\beta)-x_0(\beta)|\le|\varepsilon|\left[
\|\psi\|_{L^\infty(\mathbb{R})}+\|\psi'\|_{L^\infty(\mathbb{R})}\right]
\end{align*}
and
\begin{align*}
|\ell_\varepsilon(\theta,\eta)-\ell_0(\theta,\eta)|\leq
|x_\varepsilon(\eta)-x_0(\eta)|+|x_\varepsilon(\theta)-x_0(\theta)|
\le2|\varepsilon|\left[\|\psi\|_{L^\infty(\mathbb{R})}+\|\psi'\|_{L^\infty(\mathbb{R})}\right].
\end{align*}
Thus, $\ell_\varepsilon\to\ell_0$ uniformly in $(\theta,\eta)$ as
$\varepsilon\to0$. This implies that, after decreasing $\varepsilon_\psi$ if necessary, we obtain
\begin{align}\label{eq:ellyuanxia}
\ell_\varepsilon(\theta,\eta)\ge\ell_0(\theta,\eta)
-|\ell_\varepsilon(\theta,\eta)-\ell_0(\theta,\eta)|\ge\frac{c_0}{2}.
\end{align}
On the other hand, by \eqref{eq:repsjifen}, \eqref{eq:aepsjifen}, \eqref{eq:bepsjifen},
and the fact  that $|\delta(\theta,\eta)|\leq \pi,$ it is easy to check that
\begin{align*}
\sum_{j=0}^2 |\partial_\varepsilon^j\ell_\varepsilon(\theta,\eta)|
\lesssim1
\end{align*}
and
\begin{align*}
\sum_{j=0}^2 |\partial_\varepsilon^jP_\varepsilon(\theta,\eta)|\lesssim1.
\end{align*}
From this and \eqref{eq:ellyuanxia}, it follows that, for any $\theta,\eta\in (0,2\pi)$
with $|\delta(\theta,\eta)|\geq 1,$
\begin{align*}
\sum_{j=0}^2|\partial_\varepsilon^jF_\varepsilon(\theta,\eta)|\lesssim 1,
\end{align*}
which, together with \eqref{eq:wushidu}, implies that \eqref{eq:Fepskongzhi} holds.

Let
\begin{align*}
L_q(\psi):=\left.\frac{d}{d\varepsilon}\right|_{\varepsilon=0}C_q(K_{\varepsilon,\psi}).
\end{align*}
Since we have proved that $\varepsilon\mapsto C_q(K_{\varepsilon,\psi})$
is $C^2$ for $\varepsilon\in(-\varepsilon_\psi,\varepsilon_\psi),$  it follows that
\begin{align}\label{eq:buhaip}
C_q(K_{\varepsilon,\psi})=C_q(B_1)+\varepsilon L_q(\psi)+
O(\varepsilon^2)\ \ \text{as}\ \varepsilon\to0.
\end{align}
Let $C^2_{\mathrm{per}}(0,2\pi)$ denote the real vector space of all $2\pi$-periodic
functions in $C^2(\mathbb R)$. Then, by  \eqref{eq:bianjiedui} and the pointwise estimate
\eqref{eq:Fepskongzhi}, we conclude that
\begin{align*}
|L_q(\psi)|\lesssim\max_{0\le m\le2}\|\psi^{(m)}\|_{L^\infty(\mathbb{R})}.
\end{align*}
Thus, $L_q$ is a  linear bounded functional on $C^2_{\mathrm{per}}(0,2\pi)$.
Let $a\in\mathbb{R}$ and $R_a\psi(\theta):=\psi(\theta+a)$ for any $\theta\in\mathbb{R}.$
Observe that the convex body with support function $1+\varepsilon R_a\psi$
is a rotation of the convex body with support function $1+\varepsilon\psi$
and  $C_q$ is invariant under rotations. This implies that, for any $a\in\mathbb{R},$
\begin{align*}
L_q(\psi)=L_q(R_a\psi).
\end{align*}
Using this and the continuity and linearity of $L_q$, we obtain
\begin{align*}
L_q(\psi)&=\frac{1}{2\pi}\int_0^{2\pi}L_q(R_a\psi)\,da
=L_q\left(\frac1{2\pi}\int_0^{2\pi}R_a\psi\,da\right)
=L_q\left(\frac1{2\pi}\int_0^{2\pi}\psi(a)\,da\right)\\
&=\frac{L_q(1)}{2\pi}\int_0^{2\pi}\psi(a)\,da.
\end{align*}
From this and \eqref{eq:buhaip}, we deduce that
\begin{align}\label{eq:zhangkai}
C_q(K_{\varepsilon,\psi})=C_q(B_1)+\varepsilon \frac{L_q(1)}{2\pi}\int_0^{2\pi}\psi(a)\,da
+O(\varepsilon^2)\ \ \text{as}\ \varepsilon\to0.
\end{align}

For any $\theta\in\mathbb{R},$ let
\begin{align*}
\varphi(\theta):=\widetilde h_K(\theta)-1.
\end{align*}
Then, by Lemma \ref{lem:nangle}(i), we have
\begin{align}\label{eq:phipingjun}
\int_0^{2\pi}\varphi(\theta)\,d\theta=0.
\end{align}
Using Lemma \ref{lem:suppline}(ii), we obtain,  for any $r\in (0,\infty)$ and $\theta\in\mathbb R$,
\begin{align*}
\widetilde h_{\frac{K+rB_1}{1+r}}(\theta)=\frac{\widetilde h_K(\theta)+r}{1+r}
=1+\frac{\varphi(\theta)}{1+r}.
\end{align*}
From this  and  the uniqueness of a convex body with a given
support function (see Lemma \ref{lem:nangle}), we infer that, for all sufficiently
large $r$, the planar convex body $\frac{K+r\overline{B}_1}{r+1}$ is precisely
$K_{\frac1{1+r},\varphi}$. Applying \eqref{eq:zhangkai} with $\psi:=\varphi$
and using \eqref{eq:phipingjun}, we find that
\begin{align*}
C_q\left(\frac{K+rB_1}{1+r}\right)-C_q(B_1)=O\left((1+r)^{-2}\right)
\qquad\text{as }r\to\infty.
\end{align*}
This, together with \eqref{eq:shensuo}, implies that
\begin{align*}
C_q(K+rB_1)-C_q(B_{r+1})&=(1+r)^{q+1}\left[C_q\left(\frac{K+rB_1}{1+r}\right)-C_q(B_1)
\right]  \\
&=(1+r)^{q+1}O((1+r)^{-2})=O((1+r)^{q-1})=O(r^{q-1})
\end{align*}
as $r\to\infty$. This shows \eqref{eq:lim}, which completes the proof of Proposition \ref{prop:lim}.
\end{proof}

\subsection{Proof of the  Sharp Convex Chord Inequality}\label{sec:proof}
In this subsection, we prove Theorem \ref{thm:conv}.
Let $K\subset\mathbb{R}^2$ be a planar convex body. For any $v\in\mathbb{S}^1$, let
\begin{align*}
w_K(v):=h_K(v)+h_K(-v).
\end{align*}
Then $w_K(v)$ is the distance between the two parallel lines
$\ell_K(v)$ and $\ell_K(-v)$.
The following identity is the planar case of Cauchy's surface-area formula
(see, for instance, \cite[(5.73)]{s14}).
\begin{lemma}\label{lem:cauchy}
Let $K$ be a planar convex body. Then
\begin{align*}
\operatorname{Per}(K)=\frac12\int_{\mathbb{S}^1} w_K(v)\,d\sigma(v).
\end{align*}
\end{lemma}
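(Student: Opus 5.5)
We need to prove Cauchy's formula in the plane, $\operatorname{Per}(K)=\frac12\int_{\mathbb S^1}w_K\,d\sigma$. The cleanest route reuses the Crofton-type counting already established in Lemma \ref{lem:count}(ii).

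For fixed $u\in\mathbb S^1$, the lines $\ell(u,z)$ with $z\in u^\perp$ meet $K$ exactly when $z\in P_u^\perp(K)$. This projection is a compact interval in the line $u^\perp$. If $v\in\mathbb S^1$ is a unit vector spanning $u^\perp$, then
\[
P_u^\perp(K)=\{(x\cdot v)v:\ x\in K\},
\]
and $x\cdot v$ ranges over $[-h_K(-v),h_K(v)]$. Hence
\[
\int_{u^\perp}N_K(u,z)\,dz=\mathcal H^1\bigl(P_u^\perp(K)\bigr)=h_K(v)+h_K(-v)=w_K(v).
\]
Here $v=v(u)$ is the rotation of $u$ by $\pi/2$. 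Since $w_K(v)=w_K(-v)$, the choice of orientation of $v$ does not matter.

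Now integrate over $u\in\mathbb S^1$. The map $u\mapsto v(u)$ is a rotation, so it preserves $\sigma$, and therefore
\[
\int_{\mathbb S^1}\int_{u^\perp}N_K(u,z)\,dz\,d\sigma(u)=\int_{\mathbb S^1}w_K(v)\,d\sigma(v).
\]
By \eqref{eq:jifenK} the left-hand side equals $2\operatorname{Per}(K)$, which gives the claim. The only point needing care is measurability of $u\mapsto w_K(v(u))$. This holds because $h_K$ is continuous, being a supremum of linear functions over a compact set and hence Lipschitz.

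As a sanity check, for $K=\overline B_1$ we have $w_K\equiv2$, so the formula gives $\frac12\cdot 2\cdot 2\pi=2\pi$, as it should.

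The main step is the identification of the measure of the projection with the width. It is elementary, but it relies on Lemma \ref{lem:count}(ii), and in particular on the Crofton formula (Lemma \ref{lem:crofton}) being applicable to the rectifiable Jordan curve $\partial K$, which the paper has already justified. There is an alternative route in the $C^2_+$ case: write $w_K(n_\theta)=\widetilde h_K(\theta)+\widetilde h_K(\theta+\pi)$ and use $\operatorname{Per}(K)=\int_0^{2\pi}\widetilde h_K\,d\theta$ from Lemma \ref{lem:nangle}(i), then pass to general $K$ by approximation. The Crofton route above avoids that approximation, so I would use it.
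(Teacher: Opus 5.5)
Your proof is correct, but it takes a different route from the paper. The paper does not prove this lemma at all; it simply quotes the planar case of Cauchy's surface-area formula from the convex-geometry literature.

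You instead obtain the lemma as a corollary of Lemma \ref{lem:count}(ii). For each $u$, you have $\int_{u^\perp}N_K(u,z)\,dz=\mathcal H^1(P_u^\perp(K))=w_K(v)$ with $v\perp u$. Integrating over $u$ and using the rotation invariance of $\sigma$ turns $2\operatorname{Per}(K)$ into $\int_{\mathbb S^1}w_K\,d\sigma$. In effect, you observe that in the plane Cauchy's formula is just Crofton's formula for a convex curve, because almost every line meets $\partial K$ in $0$ or $2$ points.

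What your route buys is self-containment. The only external inputs are the Crofton formula (Lemma \ref{lem:crofton}) and $\operatorname{Per}(K)=\mathcal H^1(\partial K)$ (Lemma \ref{lem2.1}), and the paper already relies on both. There is also no circularity, since Lemma \ref{lem:count} does not depend on Lemma \ref{lem:cauchy}. The cited formula, by contrast, holds in every dimension and needs no rectifiability or intersection-counting, but here it is a black box.

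You were also right to avoid the alternative $C^2_+$-plus-approximation route. Passing to general $K$ there would need continuity of $\operatorname{Per}$ under Hausdorff convergence. In the paper, that continuity, \eqref{eq:wusuowei} in Lemma \ref{lem:cont}, is itself deduced from Lemma \ref{lem:cauchy}. So that route would be circular unless you proved perimeter continuity independently.
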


The following conclusion follows
directly from \cite[Theorem 3.4.1]{s14} and the remark following its proof.

\begin{lemma}\label{lem:smooth}
Let $K$ be a planar convex body. Then there exists a sequence
$\{K_j\}_{j\in\mathbb{N}}$ of $C^\infty_+$ planar convex bodies  such that $K_j\to K$ in the Hausdorff distance as $j\to\infty.$
\end{lemma}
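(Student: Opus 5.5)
The plan is to smooth the support function by averaging over rotations, then add a small multiple of the unit disk to get strictly positive curvature. Lemma \ref{lem:nangle}(ii) turns the resulting function back into a $C^\infty_+$ convex body, and Lemma \ref{lem:suppline}(iii) turns uniform convergence of support functions into Hausdorff convergence.

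\emph{Construction.} Write $\widetilde h_K(\theta)=h_K(n_\theta)$. This is a continuous $2\pi$-periodic function, since $h_K$ is sublinear and hence continuous. Fix a nonnegative even mollifier $\phi\in C_{\rm c}^\infty((-1,1))$ with $\int\phi=1$, and set $\phi_j(t):=j\phi(jt)$. Define
$$
h_j(\theta):=\int_{\mathbb R}\widetilde h_K(\theta-t)\phi_j(t)\,dt+\frac1j .
$$
Then $h_j\in C^\infty(\mathbb R)$ and $h_j$ is $2\pi$-periodic.

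\emph{Step 1: $h_j$ is the support function of a convex body.} For each fixed $t$, the function $\theta\mapsto\widetilde h_K(\theta-t)$ is the support function of the rotated body $R_tK$. Its positively homogeneous extension $x\mapsto h_{R_tK}(x)$ is therefore sublinear on $\mathbb R^2$. A nonnegative integral average of sublinear functions is sublinear, and adding $\frac1j|x|$ keeps it sublinear. Hence the homogeneous extension $H_j(r n_\theta):=r h_j(\theta)$ is a convex, positively $1$-homogeneous function on $\mathbb R^2$.

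\emph{Step 2: positive radius of curvature.} Differentiate $H_j$ twice along the tangent direction $\tau_\theta$ at the point $n_\theta$. A direct computation in polar coordinates gives
$$
\partial^2_{\tau_\theta}H_j(n_\theta)=h_j(\theta)+h_j''(\theta).
$$
Convexity of $H_j$ makes this second derivative nonnegative.

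More precisely, the averaged part alone (without the $\frac1j$) is also the homogeneous extension of a convex sublinear function. So its $h+h''$ is already $\ge0$. The term $\frac1j$ is the support function of $\frac1j\overline B_1$ and adds exactly $\frac1j$ to $h+h''$. Therefore
$$
\rho_j:=h_j+h_j''\ge\frac1j>0 .
$$

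\emph{Step 3: apply Lemma \ref{lem:nangle}(ii).} With $k=\infty$, Lemma \ref{lem:nangle}(ii) produces a $C^\infty_+$ planar convex body $K_j$ whose support function satisfies $h_{K_j}(n_\theta)=h_j(\theta)$.

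\emph{Step 4: Hausdorff convergence.} By Lemma \ref{lem:suppline}(iii),
$$
d_H(K_j,K)=\sup_\theta|h_j(\theta)-\widetilde h_K(\theta)|\le \omega(1/j)+\frac1j ,
$$
where $\omega$ is the modulus of continuity of $\widetilde h_K$. Since $\widetilde h_K$ is continuous on the compact circle, it is uniformly continuous, so $\omega(1/j)\to0$. Hence $d_H(K_j,K)\to0$.

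\emph{Main obstacle.} The delicate point is Step 2: getting $h_j+h_j''\ge0$ for the mollified function. I would justify it through convexity of the homogeneous extension together with the polar-coordinate identity for its tangential second derivative. An alternative is to observe that convolution over rotations corresponds to a Minkowski average of the rotated copies $R_tK$, which is again a convex body.
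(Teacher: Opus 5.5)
Your proof is correct. The paper gives no argument of its own for this lemma; it only cites Schneider's Theorem 3.4.1 and the remark following its proof.

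The standard construction behind that reference is the one you use: regularize the support function by an averaging that keeps it sublinear, then add a small ball so that the radius of curvature becomes strictly positive. In the plane this becomes your argument. Averaging over rotations is convolution of $\widetilde h_K$ in $\theta$, and adding $\frac1j\overline B_1$ adds exactly $\frac1j$ to $h+h''$. So the difference between your proof and the paper is one of self-containment, not strategy.

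You carry out every step with tools already in the paper:
\begin{itemize}
\item You obtain $h_j+h_j''\ge\frac1j$ from convexity of the $1$-homogeneous extension, via the polar identity $\partial^2_{\tau_\theta}H(n_\theta)=h(\theta)+h''(\theta)$. This identity is correct: expand $\sqrt{1+s^2}\,h(\theta+\arctan s)$ to second order at $s=0$.
\item You recover a $C^\infty_+$ body from Lemma \ref{lem:nangle}(ii) with $k=\infty$.
\item You get Hausdorff convergence from Lemma \ref{lem:suppline}(iii), using the bound $\omega(1/j)+\frac1j$.
\end{itemize}
As a result, Lemma \ref{lem:smooth} rests only on facts established inside the paper. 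The citation, by contrast, works in every dimension, where the corresponding positivity condition is on a restricted Hessian rather than on the scalar $h+h''$.
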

Using Lemmas \ref{lem:smooth} and \ref{lem:cauchy}, we obtain the following result.
\begin{lemma}\label{lem:cont}
Let $\{K_j\}_{j\in\mathbb{N}}$ and $K$ be planar convex bodies and $q\in (0,1).$
Assume that $K_j\to K$ in the Hausdorff distance as $j\to\infty$. Then
\begin{align}\label{eq:wusuowei}
\lim_{j\to\infty}\operatorname{Per}(K_j)=\operatorname{Per}(K)
\end{align}
and
\begin{align}\label{eq:wusuowei2}
\lim_{j\to\infty}C_q(K_j)=C_q(K).
\end{align}
\end{lemma}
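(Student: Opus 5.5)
For \eqref{eq:wusuowei}, the plan is to use Cauchy's formula (Lemma \ref{lem:cauchy}). By Lemma \ref{lem:suppline}(iii), Hausdorff convergence $K_j\to K$ gives $h_{K_j}\to h_K$ uniformly on $\mathbb S^1$. Hence $w_{K_j}\to w_K$ uniformly as well. Integrating over $\mathbb S^1$ then yields $\operatorname{Per}(K_j)\to\operatorname{Per}(K)$.

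For \eqref{eq:wusuowei2}, we first write the chord functional of a convex body as
\begin{align*}
C_q(K)=\int_{\mathbb S^1}\int_{u^\perp}|K_{u,z}|^q\,dz\,d\sigma(u),
\end{align*}
where $|K_{u,z}|=\mathcal H^1(K\cap\ell(u,z))$. This uses that $(\operatorname{int}K)_{u,z}$ is either empty or an interval whose length equals $|K_{u,z}|$ whenever $\ell(u,z)$ meets $\operatorname{int}K$. Only supporting lines are excluded, and for each $u$ these correspond to at most two values of $z$, a null set.

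The argument is dominated convergence in $(u,z)$. For domination, set $\varepsilon_j:=d_H(K_j,K)\to0$. Then $K_j\subset K+\varepsilon_j\overline B_1\subset B_R$ for a fixed $R$, so the integrand is bounded by $(2R)^q\mathbf 1_{\{|z|\le R\}}$, which is integrable.

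For pointwise convergence, fix $u$ and $z$ with $z$ in the relative interior of $P_u^\perp(K)$; the complement in $u^\perp$ of this set, intersected with the support region, is a null set. Upper semicontinuity follows from $K_j\cap\ell\subset(K+\varepsilon_j\overline B_1)\cap\ell$ together with
\begin{align*}
\bigcap_{\varepsilon>0}(K+\varepsilon\overline B_1)\cap\ell=K\cap\ell,
\end{align*}
since nested compact intervals have lengths converging to the length of their intersection. This gives $\limsup_j|(K_j)_{u,z}|\le|K_{u,z}|$.

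For lower semicontinuity, take any compact segment $I\subset\operatorname{int}K\cap\ell$ of length close to $|K_{u,z}|$; such a segment exists because $\ell$ meets $\operatorname{int}K$. Let $\delta>0$ be the distance from $I$ to $\partial K$. The key step is to show that $I\subset K_j$ once $\varepsilon_j<\delta$. Suppose instead that some $x\in I$ lies outside $K_j$. Separating $x$ from $K_j$ gives $v$ with $h_{K_j}(v)<x\cdot v$. On the other hand, $h_K(v)\ge x\cdot v+\delta$, so $|h_K(v)-h_{K_j}(v)|>\delta$, which contradicts Lemma \ref{lem:suppline}(iii). Therefore $\liminf_j|(K_j)_{u,z}|\ge|I|$, and letting $|I|\to|K_{u,z}|$ gives the required lower bound.

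On the remaining lines, those with $z\notin P_u^\perp(K)$, we have $\operatorname{dist}(\ell,K)>0$, so $\ell\cap K_j=\emptyset$ eventually and both sides are $0$.

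The main obstacle is this pointwise lower semicontinuity of chord lengths, which requires the interior argument above. Once it holds, dominated convergence gives \eqref{eq:wusuowei2}, and Lemma \ref{lem:smooth} is not needed in this lemma itself.
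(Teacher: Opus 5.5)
Your proposal is correct and follows essentially the same route as the paper. For the perimeters, both use Cauchy's formula together with uniform convergence of $w_{K_j}$; for $C_q$, both use dominated convergence over lines, with the support-function separation argument showing that interior points of $K$ eventually lie in $K_j$. The only differences are in execution: the paper gets pointwise convergence of chord lengths by integrating the a.e.\ convergence $\mathbf{1}_{K_j}\to\mathbf{1}_K$ along each line, whereas you prove the upper and lower semicontinuity bounds directly, and you make explicit the dominating function that the paper leaves implicit.
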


\begin{proof}
By Lemma \ref{lem:suppline}(iii), we conclude that
$w_{K_j}\to w_K$ uniformly on $\mathbb{S}^1$ as $j\to\infty.$
From this and Lemma \ref{lem:cauchy}, we deduce that \eqref{eq:wusuowei} holds.

It remains to show \eqref{eq:wusuowei2}.
Let $E\subset \mathbb{R}^2$ be a planar convex body.
For any given $u\in\mathbb{S}^1$ and $z\in u^{\perp}$, let
\begin{align*}
m_E(u,z):=\mathcal{H}^1\left(E\cap\ell(u,z)\right).
\end{align*}
Since the section of a convex body by a line is either empty or a compact
interval,  it follows that
\begin{align*}
C_q(E)=\int_{\mathbb{S}^1}\int_{u^\perp}m_E(u,z)^q\,dz\,d\sigma(u).
\end{align*}
Then, by the dominated convergence theorem,  to prove \eqref{eq:wusuowei2}, we only need to show that,
for  almost  every $u\in\mathbb{S}^1$ and $z\in u^{\perp},$
\begin{align}\label{eq:mKjdaomK}
\lim_{j\to\infty}m_{K_j}(u,z)= m_K(u,z).
\end{align}

We first prove that, for almost every $x\in\mathbb{R}^2,$
\begin{align}\label{eq:qianbaidu}
\lim_{j\to\infty}\mathbf{1}_{K_j}(x)=\mathbf{1}_{K}(x).
\end{align}
Let  $x\in\mathbb{R}^2$. If $x\notin K$.  Then, from the  fact that  $K$ is compact, we infer that there
exists $\delta\in (0,\infty)$ such that $\operatorname{dist}(x,K)>\delta$.
Since $\lim_{j\to\infty}d_{H}(K_j,K)=0,$ it follows that
there exists $j_0\in\mathbb{N}$ such that, for any $j\in\mathbb{N}\cap [j_0,\infty)$,
$K_j\subset K+\delta B_1$. This implies  that $\mathbf{1}_{K_j}(x)=\mathbf{1}_{K}(x)=0$ for any $j\in\mathbb{N}\cap [j_0,\infty)$.
Thus, \eqref{eq:qianbaidu} holds in this case. Now, let $x\in \operatorname{int}(K).$
By \eqref{eq:yuanchuang}, we find that there exists $\epsilon\in(0,\infty)$ such that, for any $v\in \mathbb{S}^1,$
\begin{align*}
x\cdot v\le h_K(v)-\epsilon.
\end{align*}
From the fact that $h_{K_j}\to h_K$ uniformly on $S^1$ as $j\to\infty$, we deduce that, for any $v\in
\mathbb{S}^1$ and for all large $j$,
\begin{align*}
x\cdot v\le h_{K_j}(v)
\end{align*}
and hence $x\in K_j$.
This implies that $\mathbf{1}_{K_j}(x)=\mathbf{1}_{K}(x)=1$, which competes the proof of \eqref{eq:qianbaidu}.

Using \eqref{eq:qianbaidu}, we
conclude  that, for any $u\in\mathbb{S}^1$ and for  almost every $t\in(0,\infty)$ and $z\in u^\perp,$
\begin{align*}
\lim_{j\to\infty}\mathbf{1}_{K_j}(z+tu)=\mathbf{1}_K(z+tu),
\end{align*}
which, combined with  the dominated convergence
theorem, further implies that
\begin{align*}
\lim_{j\to\infty}m_{K_j}(u,z)=\lim_{j\to\infty}\int_{\mathbb{R}}\mathbf{1}_{K_j}(z+tu)\,dt
=\int_{\mathbb{R}}\mathbf{1}_K(z+tu)\,dt=m_K(u,z).
\end{align*}
This shows \eqref{eq:mKjdaomK}, which  completes the proof of Lemma \ref{lem:cont}.
\end{proof}

The following results are fundamental in real analysis;  we provide the details here.
\begin{lemma}\label{lem:right}
Let $a<b$ and let $H:\ [a,b]\to\mathbb R$ be continuous. Assume that, for any $t\in [a,b],$ the
right derivative
\begin{align*}
H'_+(t):=\lim_{h\to0^+}\frac{H(t+h)-H(t)}{h}
\end{align*}
exists and satisfies $H'_+(t)\ge0.$ Then $H$ is nondecreasing on $[a,b]$.
\end{lemma}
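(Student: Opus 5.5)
The plan is the standard perturbation argument: reduce to a strict inequality by adding a small linear term, and then get a contradiction at a supposed first point of decrease.

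Fix $\varepsilon>0$ and set $H_\varepsilon(t):=H(t)+\varepsilon t$ for $t\in[a,b]$. Then $H_\varepsilon$ is continuous, and at every $t\in[a,b]$ its right derivative exists with
$$(H_\varepsilon)'_+(t)=H'_+(t)+\varepsilon\ge\varepsilon>0.$$
It suffices to show that each $H_\varepsilon$ is nondecreasing. Indeed, for $a\le x<y\le b$ this gives $H(y)+\varepsilon y\ge H(x)+\varepsilon x$, and letting $\varepsilon\to0^+$ yields $H(y)\ge H(x)$.

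To prove that $H_\varepsilon$ is nondecreasing, fix $x\in[a,b)$ and consider
$$S:=\{t\in[x,b]:\ H_\varepsilon(s)\ge H_\varepsilon(x)\ \text{for all } s\in[x,t]\}.$$
The set $S$ contains $x$ and is an interval starting at $x$. Let $c:=\sup S$. By continuity of $H_\varepsilon$, $c\in S$, so $H_\varepsilon(c)\ge H_\varepsilon(x)$. Suppose $c<b$. Since $(H_\varepsilon)'_+(c)>0$, there exists $\delta>0$ with $c+\delta\le b$ such that $H_\varepsilon(c+h)>H_\varepsilon(c)\ge H_\varepsilon(x)$ for all $h\in(0,\delta)$. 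Hence $c+\delta/2\in S$, which contradicts the definition of $c$. Therefore $c=b$, and $H_\varepsilon(y)\ge H_\varepsilon(x)$ for every $y\in[x,b]$.

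The only delicate point is the need for the $\varepsilon$-perturbation. With merely $H'_+\ge0$, one cannot conclude that $H$ stays above $H(c)$ just to the right of $c$, so the supremum argument breaks down. The strict positivity $(H_\varepsilon)'_+(c)\ge\varepsilon>0$ is exactly what allows the set $S$ to be pushed past $c$.

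Continuity of $H$ is used in two places: to show that $c\in S$, since $S$ is closed by continuity, and through the passage $\varepsilon\to0$. This is also where the hypothesis that right derivatives exist at every point, rather than almost everywhere, enters, because the argument applies it at the specific point $c$. No results from earlier in the paper are needed.
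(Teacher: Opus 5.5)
Your proof is correct and follows essentially the same route as the paper. Both arguments perturb to $H_\varepsilon(t)=H(t)+\varepsilon t$ so that $(H_\varepsilon)'_+\ge\varepsilon>0$, derive a contradiction at a point $c<b$, and then let $\varepsilon\to0^+$. The only difference is how $c$ is chosen: the paper assumes $H_\varepsilon(b)\le H_\varepsilon(a)$, takes $c$ to be the leftmost maximizer of $H_\varepsilon$, and then applies the result on subintervals, whereas you take $c$ as the supremum of a ``good'' set, which handles all pairs $x<y$ at once.

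One small quibble: the limit $\varepsilon\to0^+$ is purely pointwise and does not use continuity of $H$. Continuity is needed only to guarantee $c\in S$ (in the paper, to guarantee that the maximum is attained).
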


\begin{proof}
Since the same argument applies
to every subinterval of $[a,b]$, it is enough to prove that $H(b)\ge H(a)$. Let $\varepsilon\in(0,\infty)$.
For any $t\in [a,b]$, define $H_\varepsilon(t):=H(t)+\varepsilon t.$
Then, for any $t\in[a,b)$,
\begin{align*}
(H_\varepsilon)'_+(t)=H'_+(t)+\varepsilon\ge\varepsilon>0.
\end{align*}
We claim that $H_\varepsilon(b)>H_\varepsilon(a)$.
Assume for contradiction that $H_\varepsilon(b)\le H_\varepsilon(a).$
Let
\begin{align*}
c:=\inf\left\{t\in[a,b]: H_\varepsilon(t)=\max_{s\in[a,b]}H_\varepsilon(s)\right\}.
\end{align*}
Since $H_\varepsilon(b)\le H_\varepsilon(a)$,  it follows that
the infinity is attained at some point $c$, which is  different from $b$. Thus, for all sufficiently
small $h\in (0,\infty)$, we have
$c+h\in[a,b]$ and $H_\varepsilon(c+h)\le H_\varepsilon(c).$
This implies that  $(H_\varepsilon)'_+(c)\le0,$
which contradicts $(H_\varepsilon)'_+(c)>0$. Thus,
$H_\varepsilon(b)>H_\varepsilon(a)$ and hence
\begin{align*}
H(b)+\varepsilon b>H(a)+\varepsilon a.
\end{align*}
Letting $\varepsilon\to0^+$, we obtain $H(b)\ge H(a).$
This finishes the proof of Lemma \ref{lem:right}.
\end{proof}

Now, we show Theorem \ref{thm:conv}.
\begin{proof}[Proof of Theorem \ref{thm:conv}]
Using Lemmas \ref{lem:smooth} and \ref{lem:cont}, we conclude that, to prove the present theorem,
we just need to show \eqref{eq:conv} for  $C^\infty_+$ convex body $K$.
By \eqref{eq:shensuo}, without loss of generality,
we may assume
$\operatorname{Per}(K)=2\pi.$
For any $r\in [0,\infty)$, let
\begin{align*}
K_r:=K+rB_1.
\end{align*}
Using Lemma \ref{lem:suppline}, we find that, for any $r\in [0,\infty)$,
\begin{align*}
\widetilde{h}_{K_r}=\widetilde{h}_{K}+r\widetilde{h}_{B_1}=\widetilde{h}_{K}+r.
\end{align*}
From this, Lemma \ref{lem:nangle}, and the assumption that $K$ is a $C^\infty_{+}$ convex body,
we infer that, for any $r\in [0,\infty)$, $K_r$
is a $C^\infty_{+}$ convex body.
By Lemma \ref{lem:cauchy}, we have
\begin{align}\label{eq:ying}
\operatorname{Per}(K_r)=\int_0^{2\pi}(h_K(\theta)+r)\,d\theta
=\operatorname{Per}(K)+2\pi r=2\pi(r+1)=\operatorname{Per}(B_{r+1}).
\end{align}
For any  $r\in [0,\infty),$ let $H(r):=C_q(K_r)-C_q(B_{r+1})$. By Proposition \ref{prop:dcomp}
and \eqref{eq:ying}, we find that, for any $r\in [0,\infty),$
\begin{align*}
\left.\frac{d}{ds}\right|_{s=0} C_q(K_r+sB_1)\ge
\left.\frac{d}{ds}\right|_{s=0} C_q(B_{r+1}+sB_1)
\end{align*}
and hence $H'_+(r)\geq 0.$
From this and Lemma \ref{lem:right}, we deduce that
$H$ is increasing on $[0,\infty).$ Thus, for any $r\in [0,\infty),$
\begin{align*}
C_q(K)\leq C_q(K_r)-C_q(B_{r+1})+C_q(B_1).
\end{align*}
Letting $r\to\infty$ and using Proposition \ref{prop:lim}, we obtain
$C_q(K)\leq C_q(B_1).$
This finishes the proof of Theorem \ref{thm:conv}.
\end{proof}

\smallskip

\noindent\textbf{Acknowledgements}\quad
The authors acknowledge the use of AI tools during the exploratory stage of this project.
All mathematical arguments and proofs in the final manuscript were checked
and written by the authors. The authors would also like to thank 
Professor Rupert L. Frank for bringing to their attention 
his preprint with Paata Ivanisvili, arXiv:2609.14513 [math.AP],
in which Frank and Ivanisvili, independently and simultaneously,
obtained a closely related result. We became aware of their work 
only after the present manuscript had already been submitted to the arXiv. 
The two approaches are substantially different. 
While both arguments involve a reduction to the convex setting, 
our proof is based on the parallel flow, whereas 
Frank and Ivanisvili use the Euler--Lagrange equation 
and additionally classify the equality cases. We should
also mention that the first version of our article was
submitted to arXiv on July 21, 2026, and then this article
was on hold by arXiv till September 17, 2026. During the period 
when it was on hold, on August 13 we submitted a revised version 
to arXiv by adding the above acknowledgement to AI.

%
%
%

\bigskip

\noindent
Xiaosheng Lin

\smallskip

\noindent
School of Mathematical Sciences, Jimei University,
Xiamen 361005, The People's Republic of China

\smallskip

\noindent {\it E-mail}: \texttt{xslin@jmu.edu.cn}

\bigskip

\noindent Dachun Yang, Wen Yuan (Corresponding author) and Yangyang Zhang

\smallskip

\noindent Laboratory of Mathematics and Complex Systems
(Ministry of Education of China),
School of Mathematical Sciences, Institute for Advanced Study,
Beijing Normal University,
Beijing 100875, The People's Republic of China

\smallskip

\noindent{\it E-mails:} \texttt{dcyang@bnu.edu.cn} (D. Yang)

\noindent\phantom{{\it E-mails:}} \texttt{wenyuan@bnu.edu.cn} (W. Yuan)

\noindent\phantom{{\it E-mails:}} \texttt{yangyzhang@bnu.edu.cn} (Y. Zhang)

\bigskip

\noindent Sibei Yang

\medskip

\noindent School of Mathematics and Statistics, Lanzhou University, Lanzhou 730000, The People's Republic of China

\smallskip

\noindent{\it E-mail:} \texttt{yangsb@lzu.edu.cn}

\end{document}